\newtheorem{theorem}{Theorem}
\newtheorem{corollary}[theorem]{Corollary}
\newtheorem{definition}[theorem]{Definition}
\newtheorem{example}[theorem]{Example}
\newtheorem{lemma}[theorem]{Lemma}
\newtheorem{convention}[theorem]{Convention}
\newtheorem{proposition}[theorem]{Proposition}
\newtheorem{remark}[theorem]{Remark}
\newcommand{\C}{{\mathbb C}}
\newcommand{\Real}{{\mbox{Re}}}
\newcommand{\supp}{{\mbox{supp}}}
\newcommand{\spann}{{\mbox{span}}}
\newcommand{\ran}{{\mbox{ran}}}
\newcommand{\idty}{{\mathbbm{1}}}
\numberwithin{equation}{section}
\numberwithin{theorem}{section} 
\numberwithin{footnote}{section}
\begin{document}

\title[Birman--{Kre\u\i n}--Vishik--Grubb theory for sectorial operators]{A Birman--{Kre\u\i n}--Vishik--Grubb theory for sectorial operators}

\author[C. Fischbacher]{Christoph Fischbacher$^1$}
\address{$^1$ Department of Mathematics, University of Alabama at Birmingham, Birmingham, AL 35294, USA}
\email{cfischb@uab.edu}

\numberwithin{equation}{section}
\numberwithin{theorem}{section} 
\numberwithin{footnote}{section}
\maketitle
%
\begin{abstract} We consider densely defined sectorial operators $A_\pm$ that can be written in the form $A_\pm=\pm iS+V$ with $\mathcal{D}(A_\pm)=\mathcal{D}(S)=\mathcal{D}(V)$, where both $S$ and $V\geq \varepsilon>0$ are assumed to be symmetric. We develop an analog to the Birmin-Kre\u\i n-Vishik-Grubb (BKVG) theory of selfadjoint extensions of a given strictly positive symmetric operator, where we will construct all maximally accretive extensions $A_D$ of $A_+$ with the property that $\overline{A_+}\subset A_D\subset A_-^*$. Here, $D$ is an auxiliary operator from $\ker(A_-^*)$ to $\ker(A_+^*)$ that parametrizes the different extensions $A_D$. After this, we will give a criterion for when the quadratic form $\psi\mapsto\Real\langle\psi,A_D\psi\rangle$ is closable and show that the selfadjoint operator $\widehat{V}$ that corresponds to the closure is an extension of $V$. We will show how $\widehat{V}$ depends on $D$, which --- using the classical BKVG-theory of selfadjoint extensions --- will allow us to define a partial order on the real parts of $A_D$ depending on $D$.
Applications to second order ordinary differential operators are discussed.
\end{abstract}
\section{Introduction}
In this paper, we want to study accretive extensions of sectorial operators $A_\pm$ of the form $A_\pm=\pm iS+V$, where $S$ and $V\geq \varepsilon>0$ \footnote{Here, for a symmetric operator $V$, the notation $V\geq\varepsilon$ means that $V$ is bounded from below with lower bound $\varepsilon$, i.e. for any $\psi\in\mathcal{D}(V)$, we have that $$\langle\psi,V\psi\rangle\geq\varepsilon\|\psi\|^2\:.$$} are both assumed to be symmetric but neither of them needs to be (essentially) selfadjoint.
\subsection{The Birman-Kre\u\i n-Vishik-Grubb theory of selfadjoint extensions} 
The study of abstract extension problems for operators on Hilbert spaces goes at least back to von Neumann \cite[Chapters V-VIII]{vNeumann}, whose well-known von Neumann formulae provide a full characterization of all selfadjoint extensions of a given closed symmetric operator $V$ with equal defect indices (for a presentation in a more modern terminology, see e.g. \cite[Vol. II, Sect.\ 80]{AkhiezerGlazman} or \cite[Satz 10.9]{Weid1}). In the same paper, von Neumann also discussed semibounded symmetric operators $V$ with lower bound $C>-\infty$, for which he managed to prove that for any $\varepsilon>0$, it is possible to construct a selfadjoint extension $V_{\varepsilon}$ of $V$ such that $V_{\varepsilon}$ is bounded from below by $(C-\varepsilon)$ (\cite[Satz 43]{vNeumann}). In particular, if $C>0$, this proves the existence of positive selfadjoint extensions of symmetric operators with positive semibound. The proof of this result relies on the construction of a non-negative selfadjoint extension $V_K$ of a given positive symmetric operator $V$, which is commonly known as the Kre\u\i n--von Neumann extension of $V$ (cf.\ \cite[Satz 42]{vNeumann}). In a footnote to the statement of \cite[Satz 43]{vNeumann}, he also conjectured the existence of a selfadjoint extension with the same lower bound as the initial symmetric operator.

This conjecture was answered in the affirmative by Friedrichs in \cite{Friedrichs}, who constructed what is nowadays known as the Friedrichs extension. Its construction exploits the fact that the quadratic form induced by a semibounded symmetric operator $V$ is always closable with its closure being the quadratic form associated to a selfadjoint extension $V_F$ that has the same lower bound as $V$. 

In \cite{Krein}, Kre\u\i n treated the problem of determining all non-negative selfadjoint extensions of a non-negative closed symmetric operator $V$ by considering the fractional linear transformation $F:=(V-\idty)(V+\idty)^{-1}$ on $\ran(V+\idty)$, whose compression $(P_{\text{ran}(V+\idty)}F)$ to $\ran(V+\idty)$ is selfadjoint ($P_{\text{ran}(V+\idty)}$ denotes the orthogonal projection onto $\ran(V+\idty)$). Moreover, if $V$ is non-negative, we have that $F$ is a contraction ($\|F\varphi\|\leq\|\varphi\|$ for all $\varphi\in\mathcal{D}(F)$, resp. $\|(V-\idty)f\|\leq\|(V+\idty)f\|$ for all $f \in\mathcal{D}(V)$). He showed that the problem of finding all non-negative selfadjoint extensions of $V$ is equivalent to finding all selfadjoint contractive extensions $F'$ of $F$ that are defined on the entire Hilbert space $\mathcal{H}$. Furthermore, he proved that there exist two special extensions of $V$, the above mentioned Kre\u\i n--von Neumann extension $V_K$ and the Friedrichs extension $V_F$. They are extremal in the sense that any other non-negative selfadjoint extension $\widehat{V}$ satisfies
\begin{equation*}
(V_F+\idty)^{-1}\leq (\widehat{V}+\idty)^{-1}\leq(V_K+\idty)^{-1}\:,
\end{equation*}
which is equivalent to
\begin{equation} \label{eq:fussball}
V_K\leq \widehat{V} \leq V_F
\end{equation}
in the quadratic form sense. Recall that for two non-negative selfadjoint operators $A$ and $B$ on a Hilbert space $\mathcal{H}$, the relation $A\leq B$ is defined as
\begin{equation*}
A\leq B :\Leftrightarrow \mathcal{D}(A^{1/2})\supset\mathcal{D}(B^{1/2})\:\:\text{and}\:\:\|A^{1/2}f\|\leq\|B^{1/2}f\|
\end{equation*}
for all $f\in\mathcal{D}(B^{1/2})$. \footnote{As done in \cite{Alonso-Simon}, we extend this definition to the case that $B$ is selfadjoint on a closed subspace $\mathcal{K}\subset\mathcal{H}$. For example, let $\mathcal{K}$ be a closed proper subspace of $\mathcal{H}$ and define $0_\mathcal{K}$ and $0_\mathcal{H}$ to be, respectively, the zero operators on $\mathcal{K}$ and $\mathcal{H}$. According to this definition, we then would get that $0_\mathcal{H}\leq 0_\mathcal{K}$. In \cite{Alonso-Simon}, the convention $B:=\infty$ on $\mathcal{D}(B)^\perp$ is introduced to make this more apparent.} 

  The further investigations of Vishik and Birman \cite{Vishik, Birman} resulted in the following characterization of all non-negative selfadjoint extensions of a positive closed symmetric operator $V$, which we present mainly following the notation and presentation of \cite{Alonso-Simon}:
\begin{proposition} Let $V\geq \varepsilon>0$ be a closed symmetric operator. Then, there is a one-to-one correspondence between all non-negative selfadjoint extensions of $V$ and all pairs $(\mathfrak{M},B)$, where $\mathfrak{M}\subset\ker(V^*)$ is a closed subspace and $B$ is a non-negative selfadjoint auxiliary operator in $\mathfrak{M}$ (in particular, $\overline{\mathcal{D}(B)}=\mathfrak{M}$). These non-negative selfadjoint extensions are given by
\begin{align}
V_{\mathfrak{M},B}:\:\:\mathcal{D}(V_{\mathfrak{M},B})&=\mathcal{D}(V)\dot{+}\{V_F^{-1}B\widetilde{k}+\widetilde{k}: \widetilde{k}\in\mathcal{D}(B)\}\dot{+}\{ V_F^{-1}k:k\in\ker(V^*)\cap\mathcal{D}(B)^\perp\}\notag\\ V_{\mathfrak{M},B}&=V^*\upharpoonright_{\mathcal{D}(V_{\mathfrak{M},B})}\:.
\label{eq:vishikbirmangrubb}
\end{align} \label{prop:kreinistfein}
\end{proposition}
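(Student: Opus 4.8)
The plan is to establish the bijection by constructing the extension from the pair and inverting the construction, relying on the classical BKVG-type decomposition of the maximal domain. First I would recall the fundamental domain decomposition for a strictly positive closed symmetric operator $V\geq\varepsilon>0$: since $V$ has a bounded inverse on its range and the Friedrichs extension $V_F$ is invertible, one has the direct (but in general non-orthogonal) decomposition
\begin{equation*}
\mathcal{D}(V^*)=\mathcal{D}(V)\dot{+}V_F^{-1}\ker(V^*)\dot{+}\ker(V^*)\:.
\end{equation*}
This is the abstract analog of writing a maximal-domain element as a ``regular'' part in $\mathcal{D}(V)$ plus a piece carrying the boundary data. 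The key quantitative input is that $V_F^{-1}$ maps $\ker(V^*)$ isomorphically onto a complement and that $V^*$ annihilates $\ker(V^*)$, so that evaluating the form $\langle V^* u, u\rangle$ on such elements reduces to pairings living entirely on $\ker(V^*)$.

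Next I would, given a pair $(\mathfrak{M},B)$ with $\mathfrak{M}\subset\ker(V^*)$ closed and $B\geq 0$ selfadjoint in $\mathfrak{M}$, \emph{define} $V_{\mathfrak{M},B}$ by the domain prescription in~\eqref{eq:vishikbirmangrubb} and verify the three required properties in turn. The restriction of $V^*$ to this domain is symmetric: expanding $\langle V_{\mathfrak{M},B}u, u\rangle$ for $u=f+(V_F^{-1}B\widetilde{k}+\widetilde{k})+V_F^{-1}k$ and using the decomposition together with the defining Green-type identity for $V_F$ should collapse the cross terms, leaving $\langle V f,f\rangle$ plus the manifestly symmetric and non-negative contribution $\langle B\widetilde{k},\widetilde{k}\rangle$ coming from the $B$-block and a non-negative contribution from the $V_F^{-1}k$ block. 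This simultaneously shows non-negativity. For selfadjointness I would check that the domain has the correct ``size'': the space of boundary data $\{B\widetilde k+\widetilde k\}\dot{+}(\ker(V^*)\cap\mathcal{D}(B)^\perp)$ is a maximal non-negative (in the appropriate boundary sesquilinear form sense) subspace of $\ker(V^*)$, which pins down a selfadjoint rather than merely symmetric extension.

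For the reverse direction, starting from an arbitrary non-negative selfadjoint extension $\widehat{V}$ with $V\subset\widehat{V}\subset V^*$, I would recover $(\mathfrak{M},B)$ by intersecting $\mathcal{D}(\widehat{V})$ with the boundary spaces: set $\mathfrak{M}$ to be the closure of the span of the $\ker(V^*)$-components appearing in $\mathcal{D}(\widehat{V})$, and reconstruct $B$ from the relation $u\mapsto$ (component of $\widehat{V}u$ in $\ker(V^*)$) restricted to those boundary vectors, showing this map is well-defined, non-negative, and selfadjoint in $\mathfrak{M}$. One then checks that feeding this $(\mathfrak{M},B)$ back into~\eqref{eq:vishikbirmangrubb} returns exactly $\widehat{V}$, and that distinct pairs give distinct extensions, which establishes the one-to-one correspondence.

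The main obstacle I expect is the bookkeeping around the auxiliary operator $B$ being only densely defined in $\mathfrak{M}$ rather than everywhere defined: the domain in~\eqref{eq:vishikbirmangrubb} splits precisely because the $\widetilde k$-block ranges over $\mathcal{D}(B)$ while the orthogonal complement $\ker(V^*)\cap\mathcal{D}(B)^\perp$ is handled separately (morally assigning ``$B=\infty$'' there, in the sense of the footnote convention), and one must verify the decomposition is genuinely direct and that $V_{\mathfrak{M},B}$ is \emph{closed}. Relatedly, translating the abstract non-negativity and maximality conditions into the statement that $B$ is selfadjoint (not merely symmetric) on its domain is where the classical Kre\u\i n--Vishik--Birman machinery does its real work; I would lean on the fractional-linear-transformation picture recalled in the introduction, identifying selfadjoint non-negative extensions of $V$ with selfadjoint contractive extensions of $F=(V-\idty)(V+\idty)^{-1}$, to transport the selfadjointness of the contraction into selfadjointness of $B$.
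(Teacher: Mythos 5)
You should first note that the paper itself gives no proof of Proposition \ref{prop:kreinistfein}: it is recalled as a classical result of Vishik, Birman, Grubb and Alonso--Simon, stated with citations only. So there is no internal proof to compare against, and your proposal has to stand on its own. As an outline of the classical operator-theoretic route it is reasonable: the decomposition $\mathcal{D}(V^*)=\mathcal{D}(V)\dot{+}V_F^{-1}\ker(V^*)\dot{+}\ker(V^*)$ is indeed the correct starting point, and injectivity of the correspondence does follow from the directness of that sum. One local inaccuracy: the quadratic form does not collapse into the three separate blocks you describe. Writing $u=g+\widetilde{k}$ with $g=f+V_F^{-1}B\widetilde{k}+V_F^{-1}k\in\mathcal{D}(V_F)$, one gets $\langle u,V^*u\rangle=\langle g,V_Fg\rangle+\langle\widetilde{k},B\widetilde{k}\rangle$; the cross terms among $f$, $V_F^{-1}B\widetilde{k}$ and $V_F^{-1}k$ do not vanish individually but are absorbed into the single term $\langle g,V_Fg\rangle$. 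The conclusion (symmetry and non-negativity) survives, but the bookkeeping as you state it would not check out term by term.

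The genuine gap is selfadjointness and surjectivity, which is where the entire content of the theorem lives and where your proposal only asserts what needs to be proved. Saying that the boundary data form ``a maximal non-negative subspace of $\ker(V^*)$, which pins down a selfadjoint rather than merely symmetric extension'' presupposes exactly the correspondence being established: you must exhibit the abstract boundary form $\langle V^*u,v\rangle-\langle u,V^*v\rangle$ in terms of the $\ker(V^*)$-components, show that selfadjointness of the restriction of $V^*$ is equivalent to the boundary data constituting a \emph{selfadjoint relation} in $\ker(V^*)$, and then observe that a non-negative selfadjoint relation in $\ker(V^*)$ is precisely a pair $(\mathfrak{M},B)$ with $B$ a densely defined non-negative selfadjoint operator in $\mathfrak{M}$ and multivalued part supported on $\ker(V^*)\cap\mathcal{D}(B)^\perp$ (the ``$B=\infty$'' convention of the footnote). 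Similarly, in the reverse direction you must prove --- not just state --- that the reconstructed $B$ is selfadjoint and non-negative, and that non-negativity of $\widehat{V}$ transfers to non-negativity of $B$; the standard mechanisms are either the Kre\u\i n transform $F=(V-\idty)(V+\idty)^{-1}$ carried out in detail, or the form identity \eqref{eq:alonsosimon} together with closedness of the form $\|V_F^{1/2}f\|^2+\|B^{1/2}\eta\|^2$ on $\mathcal{D}(V_F^{1/2})\dot{+}\mathcal{D}(B^{1/2})$. As written, your argument would establish that $V_{\mathfrak{M},B}$ is a non-negative symmetric extension, but not that it is selfadjoint, nor that every non-negative selfadjoint extension is of this form.
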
 
While these approaches predominantly relied on operator methods, the presentation of Alonso and Simon in \cite{Alonso-Simon} emphasizes form methods. They obtain the following description of the quadratic form induced by the operators $V_{\mathfrak{M},B}$:
\begin{align} \label{eq:alonsosimon}
V_{\mathfrak{M},B}^{1/2}:\qquad\qquad\mathcal{D}(V_{\mathfrak{M},B}^{1/2})&=\mathcal{D}(V_F^{1/2})\dot{+}\mathcal{D}(B^{1/2})\notag\\
\|V_{\mathfrak{M},B}^{1/2}(f+\eta)\|^2&=\|V_F^{1/2}f\|^2+\|B^{1/2}\eta\|^2\:,
\end{align}
where $f\in\mathcal{D}(V_F^{1/2})$ and $\eta\in\mathcal{D}(B^{1/2})$. From this, it immediately follows that
\begin{equation*}
V_{\mathfrak{M},B}\leq V_{\mathfrak{M}',B'}\quad\Leftrightarrow\quad B\leq B'
\end{equation*}
--- again with the understanding that $\mathfrak{M}'=\overline{\mathcal{D}(B')}$ is potentially a proper subspace of $\mathfrak{M}=\overline{\mathcal{D}(B )}$.
In particular, this implies \eqref{eq:fussball}, where $V_F=V_{\{0\},{\bf{0}}}$ and $V_K=V_{\ker(V^*), {\bf{0}}}$, where --- by abuse of notation --- ${\bf{0}}$ denotes the zero operator on the trivial space $\{0\}$ for $V_F$ as well as on $\ker(V^*)$ for $V_K$. 

These results have also been obtained and extended by Grubb in \cite[Chapter II \S 2]{Grubb68}, who was able to characterize (maximally) sectorial and (maximally) accretive extensions $\widehat{V}$ of $V$ such that $V\subset\widehat{V}\subset V^*$ by allowing the auxiliary operator $B$ to be (maximally) sectorial and (maximally) accretive (cf.\ also the addendum acknowledging Grubb's contributions to the field \cite{Grubbfriendly}). 
\subsection{Sectorial operators}
Numerous authors have contributed towards the theory of accretive extensions of given sectorial operators (these terms will be defined in Definitions \ref{def:accretive} and \ref{def:sectorial}). For a broader overview, which lies beyond the scope of this paper, we point the interested reader to the surveys \cite{Arli,AT2009} and all the references therein. However, let us mention the results of Arlinski\u\i , Derkach, Hassi, Kovalev, Malamud, Mogilevskii, Popov, de Snoo and {Tsekanovski\u\i} \cite{Arlinskii95, Kovalev, ArliPopov, ArliPopov2, AT2005, DM91, HMS04, MMM01, MMM06, MM97, MM99, MM02} who have made many contributions using form methods and boundary triples as well as dual pairs of contractions in order to determine maximally sectorial and maximally accretive extensions of a given sectorial operator.  

In particular, so called Vishik-Birman-Grubb type formulas \cite[Sec.\ 3.8]{Arli} that are a generalization of \eqref{eq:vishikbirmangrubb} to the sectorial setting have been obtained by {Arlinski\u\i} in \cite{Arlinskii99, Arlinskii2000}. (See also Theorems 3.21, 3.22 and 3.23 in \cite{Arli}.) Arlinski\u\i 's Vishik-Birman-Grubb type formulas are more general than our Vishik-Birman-Grubb type formula which we present in Theorem \ref{thm:dido} inasmuch as our assumptions are a special case of those made by Arlinski\u\i . However, we have chosen a different way of presenting our results, which resembles more to Proposition \ref{prop:kreinistfein}. Moreover, this allows us to build up towards our new results given in Sections \ref{sec:form} and \ref{sec:order}.

\subsection{Our results} In this paper, we want to consider sectorial operators $A_\pm$ that can be written in the form $A_\pm=\pm iS+V$ with $\mathcal{D}(A_\pm)=\mathcal{D}(S)=\mathcal{D}(V)$, where $S$ and $V\geq \varepsilon>0$ are assumed to be symmetric. To this end, we will introduce a slightly more general notion of sectoriality than can be usually found in the literature (cf. for example \cite[Chap. V,\S 3]{Kato}). In particular, for an accretive operator to be sectorial, we will only require that its numerical range be contained in some sector of the right half-plane with opening angle strictly less than $\pi$, while one usually has the additional condition that this sector be strictly contained inside the right half-plane, i.e.\ it may not contain any purely imaginary number.   \\
We will proceed as follows:\\
In Section \ref{sec:definitions}, we will give a few definitions and review some previous results. We will also discuss the Friedrichs extension $A_F$ of a given sectorial operator $A_+=iS+V$ and show some useful properties (Section \ref{subsec:Friedrichs}).   \\
In Section \ref{sec:proper}, in analogy to the classical Birman-Kre\u\i n-Vishik-Grubb (BKVG) theory, we use an auxiliary operator $D$ mapping from a subspace $\mathfrak{N}\subset\ker(A_-^*)\cap\mathcal{D}(V_K^{1/2})$ into $\ker(A_+^*)$ in order to characterize all maximally accretive extensions $A_{\mathfrak{N},D}$ of $\overline{A_+}$ that satisfy $\overline{A_+}\subset A_{\mathfrak{N},D}\subset A_-^*$ (Theorem \ref{thm:dido}). Here, $\overline{A_+}$ denotes the closure of $A_+$. In this sense, one can think of $\overline{A_+}$ as a minimal operator and of $A_-^*$ as a maximal operator, so that $A_{\mathfrak{N},D}$ would describe an accretive extension of $\overline{A_+}$ that still preserves the action of $A_-^*$ (cf. Example \ref{ex:minmax}). Even though the results discussed in this section are a special case of results previously obtained in \cite{FNW}, they are presented in a way that directly shows how they generalize the traditional BKVG-theory for the non-negative symmetric case and that allows us to obtain the results in the following sections. 

In Section \ref{sec:form} we then investigate the question whether the quadratic form $\mathfrak{re}_{D,0}$, which is associated to the real part of $A_{\mathfrak{N},D}$ and is given by
\begin{align*}
\mathfrak{re}_{D,0}:\qquad\mathcal{D}(\mathfrak{re}_{D,0})&=\mathcal{D}(A_{\mathfrak{N},D})\\
\psi&\mapsto\Real\langle\psi,A_{\mathfrak{N},D}\psi\rangle
\end{align*}
is closable (Theorem \ref{thm:closable}). If yes, we construct its closure $\mathfrak{re}_D$ (Lemma \ref{lemma:sts} and Theorem \ref{thm:townsende}). We will also see that $\mathfrak{re}_{D,0}$ can only fail to be closable in the extremal case
\begin{equation} \label{eq:extremal}
\inf\{\arg(z):z\in\mathcal{N}(A_+)\}=-\pi/2\quad\text{or}\quad
\sup\{\arg(z):z\in\mathcal{N}(A_+)\}=\pi/2\:,
\end{equation}
where $\mathcal{N}(A_+)$ denotes the numerical range of the operator $A_+$ (Example \ref{ex:closfail}).

We then show in Section \ref{sec:order} that the selfadjoint operator ${V}_D$ associated to $\mathfrak{re}_D$ is an extension of $V$ (Lemma \ref{lemma:brexit}), which implies by the classical BKVG theory (Proposition \ref{prop:kreinistfein}) of selfadjoint extensions that there exists a selfadjoint operator $B$ defined on a subspace $\mathfrak{M}$ of $\ker V^*$ such that ${V}_D=V_{\mathfrak{M},B}$. We will also show how $\mathfrak{M}$ and $B$ can be constructed from $\mathfrak{N}$ and $D$ and how this can be used to define an order on the real parts of different extensions $A_{\mathfrak{N}_1,D_1}$ and $A_{\mathfrak{N}_2,D_2}$ of $A$. In particular, we will obtain a generalization of Proposition \ref{prop:kreinistfein}, which we will recover for the case that $S=0$, i.e.\ if $A_\pm=V$.
\section{Definitions and previous results}  
\label{sec:definitions}
\subsection{Definitions}
Let us start with a few basic definitions:
\subsubsection{Accretive and sectorial operators}
\begin{definition} A densely defined operator $A$ on a Hilbert space $\mathcal{H}$ is called {\bf{accretive}} if and only if 
\begin{equation*}
\Real\langle\psi,A\psi\rangle\geq 0
\end{equation*}
for any $\psi\in\mathcal{D}(A)$. Moreover, if $A$ is accretive and has no non-trivial accretive operator extension then it is called {\bf{maximally accretive}}.
\label{def:accretive} 
\end{definition}
\begin{remark} Note that we have defined the inner product $\langle \cdot,\cdot\rangle$ to be antilinear in the first and linear in the second component, i.e.\ for any $f,g\in\mathcal{H}$ and any $\lambda\in\C$ we get $\langle f,\lambda g\rangle=\lambda\langle f,g\rangle=\langle \overline{\lambda}f,g\rangle$.
\end{remark}
\begin{definition} Let $A$ be accretive. We say that $A$ is {\bf{sectorial}} if there exists a $\varphi\in (0,2\pi)$ such that $(e^{i\varphi}A)$ is accretive as well. Moreover, if $A$ is sectorial and maximally accretive then it is called {\bf{maximally sectorial}}.\label{def:sectorial}
\end{definition}
\begin{remark}
Again, we emphasize that in the literature (e.\ g.\ in \cite[Chap. V,\S 3]{Kato}), an accretive operator $A$ is usually called sectorial if there exists an $\varepsilon\in (0,2\pi)$ such that $(e^{i\varepsilon}A)$ and $(e^{-i\varepsilon}A)$ are both still accretive. Our slightly more general definition will however allow us to consider interesting examples where the quadratic form induced by the real part of a maximally sectorial extension is not closable (Example \ref{ex:closfail}), which would otherwise not be possible (cf.\ Remark \ref{rem:closable}).
\end{remark}
\begin{example} Let $\mathcal{H}=L^2(0,1)$ and for $\gamma>0$, define the operators $C_\pm$ as follows:
\begin{align*}
C_\pm:\qquad \mathcal{D}(C_\pm)&=\mathcal{C}_c^\infty(0,1)\\
\left(C_\pm f\right)(x)&=\pm if''(x)+\frac{\gamma}{x^2}f(x)\:.
\end{align*}
It can be immediately seen that $C_\pm$ are both accretive and sectorial in the sense of Definition \ref{def:sectorial}, since
\begin{equation*}
\Real\langle f,C_\pm f\rangle=\gamma\int_0^1\frac{|f(x)|^2}{x^2}dx\geq 0\quad\text{and}\quad \Real\langle f, \pm iC_\pm f\rangle=\int_0^1|f'(x)|^2dx\geq 0
\end{equation*}
for any $f\in\mathcal{C}_c^\infty(0,1)$.
 Moreover, it can be shown that the operators $C_\pm$ are not sectorial in the sense of \cite[Chap. V,\S 3]{Kato}. To this end, let $0<\varepsilon<\pi$ and consider $e^{\pm i\varepsilon}C_\pm$. For any $f\in\mathcal{C}_c^\infty(0,1)$ we then get
\begin{align} \label{eq:nokato}
\Real\langle f,\left(e^{\pm i\varepsilon}C_\pm\right)f\rangle=\int_0^1\left( \cos(\varepsilon)\frac{\gamma|f(x)|^2}{x^2}-\sin(\varepsilon)|f'(x)|^2\right)dx
\end{align}
Now, choose a sequence of normalized compactly supported smooth functions $\{f_n\}_{n=1}^\infty$ with $\supp(f_n)\subset[1/2,1]$ such that $\|f_n'\|\rightarrow\infty$. From this, it immediately follows that
\begin{equation*}
\lim_{n\rightarrow\infty}\Real\langle f_n,\left(e^{\pm i\varepsilon}C_\pm\right)f_n\rangle=-\infty\:,
\end{equation*}
which shows that the operators $C_\pm$ are not sectorial in the sense of \cite[Chap. V,\S 3]{Kato}. \label{ex:closfailprep}
\end{example}
Finally, let us introduce some useful terminology for sectorial operators of the form $A_\pm=\pm iS+V$, where $S$ and $V\geq\varepsilon$ are symmetric:
\begin{definition}
Let $S$ and $V\geq \varepsilon$ be symmetric such that $\mathcal{D}(S)=\mathcal{D}(V)$ and such that the operators $A_\pm:=\pm iS+V$ are sectorial. We then call $V$ the {\bf{real part}} of $A_\pm$ and $(\pm S)$ the {\bf{imaginary part}} of $A_\pm$. 
\end{definition} 
\subsubsection{Proper extensions of dual pairs}
Since we will construct extensions $\widehat{A}$ of $A_+$ with the property that $\overline{{A}_+}\subset\widehat{A}\subset A_-^*$, let us introduce the notion of dual pairs (cf.\ \cite{Edmunds-Evans, LyantzeStorozh} for details): 
\begin{definition} Let $(A,\widetilde{A})$ be a pair of densely defined and closable operators. We say that they form a {\bf{dual pair}} if 
\begin{equation*}
A\subset \widetilde{A}^*\quad\text{resp.}\quad \widetilde{A}\subset A^*\:.
\end{equation*}
In this case, $A$ is called a {\bf{formal adjoint}} of $\widetilde{A}$ and vice versa. Moreover, an operator $\widehat{A}$ such that $A\subset\widehat{A}\subset\widetilde{A}^*$ is called a {\bf{proper extension}} of the dual pair $(A,\widetilde{A})$.
\label{def:dual}
\end{definition}   
\begin{remark} Note that by \cite[Chap.\ V, Thm. 3.4]{Kato}, an accretive operator is always closable.
\end{remark}
\begin{remark} Let $A$ be an accretive and thus closable operator with closure $\overline{A}$. In addition, let $B$ be a closed accretive extension of $A$, which means that $\overline{A}\subset B$. Thus, when considering the problem of finding closed accretive extensions of $A$, it is sufficient to restrict the search to closed extensions of $\overline{A}$, which we will do from now on.
\end{remark}
For the framework of accretive operators of the form $A_\pm=\pm iS+V$, where $\mathcal{D}(A_\pm)=\mathcal{D}(S)=\mathcal{D}(V)$, it is not hard to see that $(A_+,A_-)$ as well as $(\overline{A}_+,\overline{A}_-)$ form a dual pair. In particular, since $\overline{A_+}\subset A_-^*$, one can think of $\overline{A_+}$ as a minimal operator and of $A_-^*$ as the corresponding maximal operator. Then, the problem of finding proper extensions $\widehat{A}$ such that $\overline{A_+}\subset\widehat{A}\subset A_-^*$ can be interpreted as finding extensions of $\overline{A_+}$ that still preserve the formal action of the maximal operator $A_-^*$:
\begin{example} \label{ex:minmax}
Let $\mathcal{H}=L^2(0,1)$ and let $S=-\frac{d^2}{dx^2}$ be the Laplace operator with domain $\mathcal{D}(S)=\mathcal{C}_c^\infty(0,1)$ and let $V$ be a multiplication by a real function $V(x)\in L^\infty(0,1)$ satisfying $V(x)\geq\varepsilon>0$ almost everywhere. Moreover, let $\mathcal{D}(V)=\mathcal{D}(S)=\mathcal{C}_c^\infty(0,1)$. Then, with the choice $A_\pm:=\pm iS+V$, it is clear that $(A_+,A_-)$ is a dual pair of sectorial operators. Moreover, the maximal operator $A_-^*$ is given by
\begin{align*}
A_-^*:\qquad\mathcal{D}(A_-^*)&=H^2(0,1)\\
\left(A_-^*f\right)(x)&=-if''(x)+V(x)f(x)\:,
\end{align*}
where $f''(x)$ denotes the second weak derivative of $f$, while the minimal operator $\overline{A_+}$ is the restriction of $A_-^*$ to $H_0^2(0,1)$. \end{example}
This means that the extension problem of finding suitable $\widehat{A}$ such that $\overline{A_+}\subset\widehat{A}\subset A^*_-$ reduces to finding suitable domains $\mathcal{D}(\overline{A_+})\subset\mathcal{D}(\widehat{A})\subset\mathcal{D}(A_-^*)$ to which we restrict $A_-^*$, i.e. $\widehat{A}=A_-^*\upharpoonright_{\mathcal{D}(\widehat{A})}$. Let us thus introduce the following useful notation for complementary subspaces:
\begin{definition} Let $\mathcal{N}$ be a (not necessarily closed) linear space and $\mathcal{M}\subset\mathcal{N}$ be a (not necessarily closed) subspace. With the notation $\mathcal{N}//\mathcal{M}$ we mean any subspace of $\mathcal{N}$, which is complementary to $\mathcal{M}$, i.e.
\begin{equation*}
(\mathcal{N}//\mathcal{M})+\mathcal{M}=\mathcal{N}\quad\text{and}\quad(\mathcal{N}//\mathcal{M})\cap\mathcal{M}=\{0\}\:.
\end{equation*}
\end{definition}
This means that we can use subspaces $\mathcal{V}\subset\mathcal{D}(A_-^*)//\mathcal{D}(\overline{A}_+)$ in order to parametrize all possible proper extensions $\widehat{A}$ such that $\overline{A}_+\subset\widehat{A}\subset A_-^*$:
\begin{definition} Let $\mathcal{V}\subset\mathcal{D}(A_-^*)//\mathcal{D}(\overline{A_+})$ be a subspace. Then, the operator $A_{+,\mathcal{V}}$ is defined as
\begin{align*}
A_{+,\mathcal{V}}:\qquad\mathcal{D}(A_{+,\mathcal{V}})=\mathcal{D}(\overline{A_+})\dot{+}\mathcal{V}, \quad
A_{+,\mathcal{V}}&=A_-^*\upharpoonright_{\mathcal{D}(A_{+,\mathcal{V}})}\:.
\end{align*} \label{def:subspaceextension}
\end{definition}
Given a dual pair of minimal operators $(A,\widetilde{A})$, the following result provides a useful description for the domains of the maximal operators $\widetilde{A}^*$ and $A^*$:
\begin{proposition}[{\cite[Chapter  II, Lemma 1.1]{Grubb68}}] \label{thm:EEvM}
Let $(A,\widetilde{A})$ be a dual pair with $\lambda\in\widehat{\rho}(A)$ and $\overline{\lambda}\in\widehat{\rho}(\widetilde{A})$, where $\widehat{\rho}(C)$ denotes the field of regularity of an operator $C$\footnote{Recall that the field of regularity of an operator $C$ is given by
\begin{equation*}
\widehat{\rho}(C)=\{z\in\C: \exists c(z)>0\:\:\text{such that}\:\:\forall\: f\in\mathcal{D}(C):\:\: \|(C-z)f\|\geq c(z)\|f\|\}\:.
\end{equation*}  }. Then there exists a proper extension $\widehat{A}$ of $(A,\widetilde{A})$ such that $\lambda\in\rho(\widehat{A})$ and $\mathcal{D}(\widetilde{A}^*)$ can be expressed as
\begin{equation} \label{eq:malamud}
\mathcal{D}(\widetilde{A}^*)=\mathcal{D}(\overline{A})\dot{+}(\widehat{A}-\lambda)^{-1}\ker(A^*-\overline{\lambda})\dot{+}\ker(\widetilde{A}^*-{\lambda})\:.
\end{equation}
Likewise, we get the following description for $\mathcal{D}(A^*)$:
\begin{equation*}
\mathcal{D}({A}^*)=\mathcal{D}(\overline{\widetilde{A}})\dot{+}(\widehat{A}^*-\overline{\lambda})^{-1}\ker(\widetilde{A}^*-\lambda)\dot{+}\ker({A}^*-\overline{{\lambda}})\:.
\end{equation*}
\end{proposition}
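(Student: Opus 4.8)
The plan is to prove the formula for $\mathcal{D}(\widetilde{A}^*)$ first and then deduce the one for $\mathcal{D}(A^*)$ by duality. Throughout I write $\mathcal{R}:=\ran(\overline{A}-\lambda)$ and $\mathcal{K}:=\ker(A^*-\overline{\lambda})$. The argument has two parts: first, the construction of a proper extension $\widehat{A}$ with $\lambda\in\rho(\widehat{A})$, which I expect to be the only genuinely delicate point; and second, a short bookkeeping computation with $\widetilde{A}^*-\lambda$ that yields the direct-sum decomposition once such a $\widehat{A}$ is at hand. The two regularity hypotheses feed in through the closed range theorem: since $\lambda\in\widehat{\rho}(A)$, the closed operator $\overline{A}-\lambda$ is bounded below, hence injective with closed range $\mathcal{R}$, and $\mathcal{R}^\perp=\ker((\overline{A}-\lambda)^*)=\mathcal{K}$. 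Since $\overline{\lambda}\in\widehat{\rho}(\widetilde{A})$, the operator $T:=\overline{\widetilde{A}}-\overline{\lambda}$ is closed, injective, with closed range; by the closed range theorem $\ran(T^*)=\ran(\widetilde{A}^*-\lambda)$ is closed, and it is dense because its orthogonal complement equals $\ker T=\{0\}$. Hence $\widetilde{A}^*-\lambda$ is surjective.

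For the existence of $\widehat{A}$, equip $\mathcal{D}(\widetilde{A}^*)$ with the graph norm, making it a Hilbert space on which $\widetilde{A}^*-\lambda$ is a bounded surjection; by the open mapping theorem it admits a bounded right inverse, whose restriction $R:\mathcal{K}\to\mathcal{D}(\widetilde{A}^*)$ satisfies $(\widetilde{A}^*-\lambda)Rk=k$. Set $\mathcal{D}(\widehat{A}):=\mathcal{D}(\overline{A})\dot{+}R(\mathcal{K})$ and $\widehat{A}:=\widetilde{A}^*\upharpoonright_{\mathcal{D}(\widehat{A})}$; the sum is direct, since $Rk\in\mathcal{D}(\overline{A})$ would force $k=(\widetilde{A}^*-\lambda)Rk\in\mathcal{R}\cap\mathcal{K}=\{0\}$. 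Writing any $w\in\mathcal{H}$ as $w=w_1+w_2$ with $w_1\in\mathcal{R}$ and $w_2\in\mathcal{K}$, one checks $(\widehat{A}-\lambda)((\overline{A}-\lambda)^{-1}w_1+Rw_2)=w$, so $\widehat{A}-\lambda$ is onto; and if $(\widehat{A}-\lambda)(u_0+Rk)=0$ then $(\overline{A}-\lambda)u_0=-k\in\mathcal{R}\cap\mathcal{K}=\{0\}$, giving $u_0=0$ and $k=0$, so $\widehat{A}-\lambda$ is injective. Its inverse $(\overline{A}-\lambda)^{-1}P_{\mathcal{R}}+RP_{\mathcal{K}}$ is bounded and everywhere defined, whence $\widehat{A}$ is closed with $\lambda\in\rho(\widehat{A})$, and $\overline{A}\subset\widehat{A}\subset\widetilde{A}^*$ makes it a proper extension of the dual pair.

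Given such a $\widehat{A}$, take $u\in\mathcal{D}(\widetilde{A}^*)$ and split $(\widetilde{A}^*-\lambda)u=w_1+w_2$ orthogonally with $w_1\in\mathcal{R}$ and $w_2\in\mathcal{K}$. Put $u_0:=(\overline{A}-\lambda)^{-1}w_1\in\mathcal{D}(\overline{A})$ and $m:=(\widehat{A}-\lambda)^{-1}w_2$. Using $\overline{A}\subset\widetilde{A}^*$ and $\widehat{A}\subset\widetilde{A}^*$ one gets $(\widetilde{A}^*-\lambda)u_0=w_1$ and $(\widetilde{A}^*-\lambda)m=w_2$, so $v:=u-u_0-m\in\ker(\widetilde{A}^*-\lambda)$, exhibiting $u$ as an element of the claimed sum; note $m\in(\widehat{A}-\lambda)^{-1}\ker(A^*-\overline{\lambda})$, exactly the middle summand. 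For directness, apply $(\widetilde{A}^*-\lambda)$ to a relation $u_0+m+v=0$ with $u_0\in\mathcal{D}(\overline{A})$, $m=(\widehat{A}-\lambda)^{-1}k\in(\widehat{A}-\lambda)^{-1}\mathcal{K}$, $v\in\ker(\widetilde{A}^*-\lambda)$: this yields $(\overline{A}-\lambda)u_0=-k$, and the orthogonality $\mathcal{R}\perp\mathcal{K}$ together with injectivity of $\overline{A}-\lambda$ forces $u_0=0$, $k=0$ (hence $m=0$), and then $v=0$. This is precisely the asserted formula for $\mathcal{D}(\widetilde{A}^*)$.

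Finally, taking adjoints in $\overline{A}\subset\widehat{A}\subset\widetilde{A}^*$ gives $\overline{\widetilde{A}}\subset\widehat{A}^*\subset A^*$, so $\widehat{A}^*$ is a proper extension of the dual pair $(\widetilde{A},A)$ with $\overline{\lambda}\in\rho(\widehat{A}^*)$; applying the first formula to $(\widetilde{A},A)$ with spectral parameter $\overline{\lambda}$ and extension $\widehat{A}^*$ produces the stated decomposition of $\mathcal{D}(A^*)$. I expect the main obstacle to be the existence step: one must combine the two regularity hypotheses to obtain simultaneously that $\mathcal{R}$ is closed and that $\widetilde{A}^*-\lambda$ is onto, and then produce a right inverse whose graph-norm boundedness secures closedness of $\widehat{A}$. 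Once $\lambda\in\rho(\widehat{A})$ is established, the decomposition and its directness are a brief computation driven entirely by the orthogonality $\mathcal{R}\perp\mathcal{K}$.
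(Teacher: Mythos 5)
The paper does not prove this proposition --- it is imported verbatim from Grubb \cite[Chapter II, Lemma 1.1]{Grubb68} --- so there is no in-paper argument to compare against. Your proof is correct and is essentially the standard one: the two regularity hypotheses give, via the closed range theorem, that $\ran(\overline{A}-\lambda)$ is closed with orthogonal complement $\ker(A^*-\overline{\lambda})$ and that $\widetilde{A}^*-\lambda$ is surjective; a bounded right inverse on the defect space then produces a proper extension $\widehat{A}$ with $\lambda\in\rho(\widehat{A})$, and the decomposition plus its directness follow from the orthogonal splitting of $(\widetilde{A}^*-\lambda)u$. One small point worth making explicit: in the last paragraph you apply the first formula to the pair $(\widetilde{A},A)$ with the \emph{specific} extension $\widehat{A}^*$, whereas what you literally proved is an existence statement; this is harmless because your decomposition argument never uses the particular construction of $\widehat{A}$, only that it is a proper extension with $\lambda$ in its resolvent set, but you should say so to close the loop.
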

The following proposition ensures the existence of at least one proper maximally accretive extension of any dual pair of accretive operators:
\begin{proposition}[{\cite[Chapter IV, Proposition 4.2]{NagyFoias}}] \label{thm:friendly}
Let $(A,\widetilde{A})$ be a dual pair of accretive operators. Then there exists a maximally accretive proper extension of the dual pair $(A,\widetilde{A})$.
\end{proposition}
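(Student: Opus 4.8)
The plan is to reduce the extension problem for accretive operators to an extension problem for contractions via the Cayley transform, and then to invoke a contractive completion theorem. Since $A$ and $\widetilde{A}$ are accretive, for every $\psi$ in their domains one has $\|(A+\idty)\psi\|^2-\|(A-\idty)\psi\|^2=4\Real\langle\psi,A\psi\rangle\geq 0$, so $A+\idty$ and $\widetilde{A}+\idty$ are bounded below by $1$ and the forward Cayley transforms
\[
K:=(A-\idty)(A+\idty)^{-1},\qquad \widetilde{K}:=(\widetilde{A}-\idty)(\widetilde{A}+\idty)^{-1}
\]
are well-defined contractions with $\mathcal{D}(K)=\ran(A+\idty)$ and $\mathcal{D}(\widetilde{K})=\ran(\widetilde{A}+\idty)$. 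First I would record that the dual pair relation $A\subset\widetilde{A}^*$ translates into an adjoint relation between $K$ and $\widetilde{K}$: writing $u=(A+\idty)f$ and $v=(\widetilde{A}+\idty)g$ with $f\in\mathcal{D}(A)$, $g\in\mathcal{D}(\widetilde{A})$, and using $\langle\widetilde{A}g,f\rangle=\langle g,Af\rangle$, a short computation shows that both $\langle\widetilde{K}v,u\rangle$ and $\langle v,Ku\rangle$ reduce to $\langle\widetilde{A}g,Af\rangle-\langle g,f\rangle$, so that $\langle\widetilde{K}v,u\rangle=\langle v,Ku\rangle$. Hence $K\subset\widetilde{K}^*$ and $\widetilde{K}\subset K^*$; that is, $(K,\widetilde{K})$ is a dual pair of contractions.

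The core step is then to construct an everywhere-defined contraction $\hat{K}$ on $\mathcal{H}$ with
\[
K\subset\hat{K}\qquad\text{and}\qquad \widetilde{K}\subset\hat{K}^*.
\]
The second requirement fixes the compression of $\hat{K}$ onto $\overline{\mathcal{D}(\widetilde{K})}$, while the first prescribes $\hat{K}$ on $\mathcal{D}(K)$; the remaining blocks are free and must be chosen so that $\|\hat{K}\|\leq 1$. The consistency condition that makes such a contractive completion possible is exactly the dual-pair identity $\langle\widetilde{K}v,u\rangle=\langle v,Ku\rangle$ established above, which guarantees that the prescribed rows and columns of the associated block operator matrix are contractive; existence of a norm-$\leq 1$ completion then follows from a Parrott-type contractive completion argument.

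Finally I would transform back. If $\ker(\idty-\hat{K})=\{0\}$, then since $\hat{K}$ is a contraction the elementary fact $\ker(\idty-\hat{K})=\ker(\idty-\hat{K}^*)$ forces $\ran(\idty-\hat{K})$ to be dense, so the inverse Cayley transform $\hat{A}:=(\idty+\hat{K})(\idty-\hat{K})^{-1}$ with $\mathcal{D}(\hat{A})=\ran(\idty-\hat{K})$ is densely defined. A direct computation gives $\hat{A}+\idty=2(\idty-\hat{K})^{-1}$, so $\ran(\hat{A}+\idty)=\mathcal{H}$; together with the accretivity inherited from $\|\hat{K}\|\leq 1$ (one checks $\Real\langle g,\hat{A}g\rangle=\|x\|^2-\|\hat{K}x\|^2\geq 0$ for $g=(\idty-\hat{K})x$) this makes $\hat{A}$ maximally accretive. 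The inclusion $K\subset\hat{K}$ yields $A\subset\hat{A}$, and $\widetilde{K}\subset\hat{K}^*$ yields $\hat{A}\subset\widetilde{A}^*$ by running the Cayley computation of the first paragraph in reverse, so $\hat{A}$ is the desired proper maximally accretive extension. The main obstacle is the contractive completion together with the requirement $\ker(\idty-\hat{K})=\{0\}$: a completion produced by an abstract norm argument may a priori have $1$ as an eigenvalue, in which case the inverse transform fails to be densely defined, so one must select the completion carefully (for instance by arranging that its free part acts as a strict contraction on a suitable complementary subspace) so as to exclude a fixed vector.
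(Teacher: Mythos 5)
The paper offers no proof of this proposition at all: it is imported verbatim from Sz.-Nagy--Foias, and the Cayley-transform reduction to a dual pair of contractions that you propose is precisely the mechanism underlying that citation, so your route is the ``right'' one rather than a genuinely different one. Your computations are correct: the identity $\|(A+\idty)\psi\|^2-\|(A-\idty)\psi\|^2=4\Real\langle\psi,A\psi\rangle$, the verification that $\langle\widetilde{K}v,Ku\rangle$-type pairings reduce to $\langle\widetilde{A}g,Af\rangle-\langle g,f\rangle$ on both sides, and the observation that the dual-pair identity is exactly the consistency condition on the $(1,1)$ block needed for a Parrott-type completion (after extending $K$ and $\widetilde{K}$ by continuity to the closures of their domains, so that the prescribed column is the contraction $K$ and the prescribed row is the contraction $\widetilde{K}^*$) are all sound, as are the accretivity computation $\Real\langle g,\hat{A}g\rangle=\|x\|^2-\|\hat{K}x\|^2$ and the surjectivity $(\hat{A}+\idty)(\idty-\hat{K})x=2x$ giving maximal accretivity.

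The one step you leave open --- that an abstract contractive completion $\hat{K}$ might have $1$ as an eigenvalue, forcing a ``careful selection'' of the free block --- is in fact a non-issue, and no selection is needed. For \emph{any} extension $\hat{K}\supset K$ one has
\begin{equation*}
(\idty-\hat{K})(A+\idty)f=(A+\idty)f-(A-\idty)f=2f \qquad\text{for all }f\in\mathcal{D}(A)\:,
\end{equation*}
so $\ran(\idty-\hat{K})\supset\mathcal{D}(A)$ is automatically dense because $A$ is densely defined. Since $\hat{K}$ is an everywhere-defined contraction, $\ker(\idty-\hat{K})=\ker(\idty-\hat{K}^*)=\left(\overline{\ran(\idty-\hat{K})}\right)^{\perp}=\{0\}$, so the inverse Cayley transform is well defined and densely defined for every contractive completion. (The same identity also gives $\mathcal{D}(A)\subset\mathcal{D}(\hat{A})$ and $\hat{A}f=\tfrac12(\idty+\hat{K})(A+\idty)f=Af$ directly, which streamlines the verification that $\hat{A}$ is a proper extension.) With that observation inserted, your argument is complete.
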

\subsection{Previous results on proper accretive extensions of dual pairs}
For our later results we will need the following proposition which gives a necessary and sufficient condition for a proper extension $A_{+,\mathcal{V}}$ of a dual pair $(A_+,A_-)$ of accretive operators to be accretive itself:
\begin{proposition} \label{prop:accretivetelemann} Let $S$ and $V\geq 0$ be symmetric operators such that $\mathcal{D}(S)=\mathcal{D}(V)$ and let $V_K$ be the selfadjoint non-negative {Kre\u\i n}-von Neumann extension of $V$. Moreover, let $A_\pm:=(\pm iS+V)$ and let $\mathcal{V}\subset\mathcal{D}(A_-^*)//\mathcal{D}(\overline{A_+})$ be a linear space. Then, $A_{+,\mathcal{V}}$ is a proper accretive extension of $(\overline{A}_+,\overline{A}_-)$ if and only if $\mathcal{V}\subset\mathcal{D}(V_K^{1/2})$ and for any $v\in\mathcal{V}$ it holds that
\begin{equation} \label{eq:simmern}
q(v):=\Real\langle v,A_-^*v\rangle-\|V_K^{1/2}v\|^2\geq 0\:.
\end{equation}
\end{proposition}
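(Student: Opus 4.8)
The plan is to reduce accretivity of $A_{+,\mathcal{V}}$ to a single clean identity for the real part of its numerical form. By Definition \ref{def:subspaceextension} we have $\mathcal{D}(A_{+,\mathcal{V}})=\mathcal{D}(\overline{A_+})\dot{+}\mathcal{V}$ and $A_{+,\mathcal{V}}=A_-^*\upharpoonright_{\mathcal{D}(A_{+,\mathcal{V}})}$, so every $\psi\in\mathcal{D}(A_{+,\mathcal{V}})$ decomposes uniquely as $\psi=f+v$ with $f\in\mathcal{D}(\overline{A_+})$ and $v\in\mathcal{V}$; note that $A_{+,\mathcal{V}}$ is automatically a proper extension of the dual pair, so only accretivity is at stake. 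Using $\overline{A_+}\subset A_-^*$ I would write $A_-^*\psi=\overline{A_+}f+A_-^*v$ and expand $\Real\langle\psi,A_-^*\psi\rangle$ into the two diagonal terms $\Real\langle f,\overline{A_+}f\rangle$ and $\Real\langle v,A_-^*v\rangle$ plus the cross term $\Real\bigl(\langle f,A_-^*v\rangle+\langle v,\overline{A_+}f\rangle\bigr)$.

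The heart of the argument is to evaluate these pieces along an approximating sequence $f_n\in\mathcal{D}(A_+)=\mathcal{D}(A_-)=\mathcal{D}(V)$ with $f_n\to f$ and $A_+f_n\to\overline{A_+}f$. For the diagonal term in $f$, symmetry of $S$ and $V$ gives $\Real\langle f_n,A_+f_n\rangle=\langle f_n,Vf_n\rangle=\|V_K^{1/2}f_n\|^2$ (as $V_K$ extends $V$); since $\|V_K^{1/2}(f_n-f_m)\|^2=\Real\langle f_n-f_m,A_+(f_n-f_m)\rangle\to0$, the sequence $V_K^{1/2}f_n$ is Cauchy, whence $f\in\mathcal{D}(V_K^{1/2})$ and $\Real\langle f,\overline{A_+}f\rangle=\|V_K^{1/2}f\|^2$. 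For the cross term, the adjoint relation $\langle A_-^*v,f_n\rangle=\langle v,A_-f_n\rangle$ (valid since $f_n\in\mathcal{D}(A_-)$) combined with $A_++A_-=2V$ cancels the imaginary contributions and leaves $\Real\bigl(\langle f,A_-^*v\rangle+\langle v,\overline{A_+}f\rangle\bigr)=2\Real\lim_n\langle v,Vf_n\rangle$. Provided $v\in\mathcal{D}(V_K^{1/2})$, I rewrite $\langle v,Vf_n\rangle=\langle v,V_Kf_n\rangle=\langle V_K^{1/2}v,V_K^{1/2}f_n\rangle\to\langle V_K^{1/2}v,V_K^{1/2}f\rangle$. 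Assembling the three pieces and using the definition of $q$ yields the fundamental identity
\begin{equation*}
\Real\langle\psi,A_-^*\psi\rangle=\|V_K^{1/2}(f+v)\|^2+q(v)=\|V_K^{1/2}\psi\|^2+q(v).
\end{equation*}
Sufficiency is then immediate: if $\mathcal{V}\subset\mathcal{D}(V_K^{1/2})$ and $q(v)\geq0$ for all $v\in\mathcal{V}$, the right-hand side is non-negative for every $\psi$.

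For necessity I would proceed in two steps. To force $v\in\mathcal{D}(V_K^{1/2})$, I test accretivity on $\psi=t\phi+v$ with $\phi\in\mathcal{D}(V)$ and $t\in\R$; the constant-sequence version of the computation above gives $t^2\langle\phi,V\phi\rangle+2t\,\Real\langle v,V\phi\rangle+\Real\langle v,A_-^*v\rangle\geq0$ for all real $t$. Non-negativity of this quadratic, after replacing $\phi$ by $e^{i\theta}\phi$ and optimizing over $\theta$, yields $|\langle v,V\phi\rangle|^2\leq\langle\phi,V\phi\rangle\,\Real\langle v,A_-^*v\rangle$, so that $\sup_{\phi\in\mathcal{D}(V)}|\langle v,V\phi\rangle|^2/\langle\phi,V\phi\rangle<\infty$; by the (minimal) variational characterization of the Kre\u\i n--von Neumann form domain this is precisely the statement $v\in\mathcal{D}(V_K^{1/2})$. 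With that membership in hand the fundamental identity applies, and to extract $q(v)\geq0$ I use the Alonso--Simon decomposition \eqref{eq:alonsosimon}: writing $v=v_F+\kappa$ with $v_F\in\mathcal{D}(V_F^{1/2})$ and $\kappa\in\ker(V^*)$, one has $V_K^{1/2}v=V_F^{1/2}v_F$, and since $\mathcal{D}(V)$ is a form core for $V_F$ there exist $\phi_k\in\mathcal{D}(V)\subset\mathcal{D}(\overline{A_+})$ with $V_K^{1/2}\phi_k\to V_K^{1/2}v$. Taking $f=-\phi_k$ drives $\|V_K^{1/2}(f+v)\|^2\to0$, so the identity forces $q(v)=\lim_k\Real\langle v-\phi_k,A_-^*(v-\phi_k)\rangle\geq0$.

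The step I expect to be the genuine obstacle is establishing $v\in\mathcal{D}(V_K^{1/2})$ in the necessity direction: the diagonal and cross-term manipulations only make sense a priori once this membership is known, so one must first extract the uniform bound $|\langle v,V\phi\rangle|\leq C\,\|V_K^{1/2}\phi\|$ purely from accretivity and then invoke the \emph{correct} characterization of the form domain, namely that of $V_K$ (the minimal extension) rather than $V_F$. Verifying that this variational bound indeed places $v$ in $\mathcal{D}(V_K^{1/2})$, and that the attained value equals $\|V_K^{1/2}v\|^2$, is where the extremal role of the Kre\u\i n--von Neumann extension enters; it is the one ingredient that does not reduce to the routine sesquilinear bookkeeping underlying the rest of the argument.
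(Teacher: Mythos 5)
Your argument is correct, but it takes a genuinely different route from the paper: the paper's entire proof of Proposition \ref{prop:accretivetelemann} is a deferral (it notes via \cite[Lemma 3.3]{Nonproper} that $(\overline{A}_+,\overline{A}_-)$ satisfies the hypotheses of \cite[Thm.\ 4.7]{FNW} and imports the conclusion), whereas you reconstruct a direct, self-contained proof. Your ``fundamental identity'' $\Real\langle f+v,A_-^*(f+v)\rangle=\|V_K^{1/2}(f+v)\|^2+q(v)$ is exactly the content of Lemma \ref{lemma:cernohorsky} (itself quoted from \cite{FNW}), and your derivation --- approximating $f\in\mathcal{D}(\overline{A_+})$ by $f_n\in\mathcal{D}(V)$, using symmetry of $S$ to kill the imaginary contributions, and the adjoint relation to reduce the cross term to $2\Real\langle V_K^{1/2}v,V_K^{1/2}f\rangle$ --- is sound. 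The one external ingredient you lean on, and correctly identify as the crux, is the Ando--Nishio variational characterization $\mathcal{D}(V_K^{1/2})=\{u:\sup_{\phi\in\mathcal{D}(V)}|\langle u,V\phi\rangle|^2/\langle\phi,V\phi\rangle<\infty\}$ together with $\|V_K^{1/2}u\|^2=\sup_{\phi}|\langle u,V\phi\rangle|^2/\langle\phi,V\phi\rangle$. This is a true and standard theorem (and is in fact the engine behind the cited \cite[Thm.\ 4.7]{FNW}), but nothing in the present paper supplies it, so it must be cited rather than merely invoked. What each approach buys: the paper's citation keeps the exposition short at the cost of opacity, while your version makes visible exactly where the extremality of the Kre\u\i n--von Neumann extension enters.

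Two small points, neither a genuine gap. First, once you have the full Ando--Nishio statement, your necessity argument is finished after the first step: the bound $|\langle v,V\phi\rangle|^2\leq\langle\phi,V\phi\rangle\,\Real\langle v,A_-^*v\rangle$ yields not only $v\in\mathcal{D}(V_K^{1/2})$ but also $\|V_K^{1/2}v\|^2\leq\Real\langle v,A_-^*v\rangle$, which is $q(v)\geq 0$. Your second step via the decomposition $\mathcal{D}(V_K^{1/2})=\mathcal{D}(V_F^{1/2})\dot{+}\ker(V^*)$ is therefore redundant, and as written it quietly upgrades the hypothesis from $V\geq 0$ to $V\geq\varepsilon>0$, since Equation \eqref{eq:alonsosimon} is stated under strict positivity; better to drop it. Second, in the discriminant step you should dispose of the degenerate case $\langle\phi,V\phi\rangle=0$: there the polynomial in $t$ is affine, so accretivity forces $\Real\langle v,Ve^{i\theta}\phi\rangle=0$ for all $\theta$, i.e.\ $\langle v,V\phi\rangle=0$, and the claimed bound holds trivially.
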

\begin{proof}
In \cite[Lemma\ 3.3]{Nonproper}, it was shown that $(\overline{A}_+,\overline{A}_-)$ is --- up to a suitable multiplication by $\pm i$ --- a dual pair satisfying the assumptions of \cite[Thm.\ 4.7]{FNW} from which the proposition immediately follows.
\end{proof}
Finally, we will also need to make use of the following result which --- again up to a suitable multiplication by $\pm i$ --- can be found in {\cite[Lemma 5.1]{FNW}}.
\begin{lemma} Let $(A_+,A_-)$ be a dual pair satisfying the assumptions of Proposition \ref{prop:accretivetelemann} and let $A_{+,\mathcal{V}}$ be a proper accretive extension. We then get that
\begin{equation} \label{eq:leberkaas}
\Real\langle f+v,A_{+,\mathcal{V}}(f+v)\rangle=\|V_K^{1/2}(f+v)\|^2+q(v)
\end{equation}
for any $f\in\mathcal{D}(\overline{A_+})$ and any $v\in\mathcal{V}$, where $q(\cdot)$ is the quadratic form defined in \eqref{eq:simmern}.
\label{lemma:cernohorsky}
\end{lemma}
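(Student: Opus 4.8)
The plan is to expand the left-hand side and reduce every term to the $V_K$-form. Since $A_{+,\mathcal{V}}$ acts as $A_-^*$ on its domain and $\overline{A_+}\subset A_-^*$, for $f\in\mathcal{D}(\overline{A_+})$ and $v\in\mathcal{V}$ we have $A_{+,\mathcal{V}}(f+v)=\overline{A_+}f+A_-^* v$. I would therefore write
\begin{equation*}
\Real\langle f+v,A_{+,\mathcal{V}}(f+v)\rangle=\Real\langle f,\overline{A_+}f\rangle+\Real\langle v,A_-^* v\rangle+\Real\left(\langle f,A_-^* v\rangle+\langle v,\overline{A_+}f\rangle\right)
\end{equation*}
and handle the three groups separately, aiming to recognize them as $\|V_K^{1/2}f\|^2$, as $\|V_K^{1/2}v\|^2+q(v)$, and as the cross term $2\Real\langle V_K^{1/2}f,V_K^{1/2}v\rangle$, respectively, so that the whole right-hand side collapses to $\|V_K^{1/2}(f+v)\|^2+q(v)$.

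First I would treat the diagonal term in $f$. On $\mathcal{D}(A_+)=\mathcal{D}(S)=\mathcal{D}(V)$, the symmetry of $S$ and $V$ gives $\Real\langle g,A_+ g\rangle=\langle g,Vg\rangle=\|V_K^{1/2}g\|^2$, using that $V_K$ extends $V$. To pass to $f\in\mathcal{D}(\overline{A_+})$, I would choose $f_n\in\mathcal{D}(A_+)$ with $f_n\to f$ and $A_+ f_n\to\overline{A_+}f$. Applying the identity to $f_n-f_m$ yields
\begin{equation*}
\|V_K^{1/2}(f_n-f_m)\|^2=\Real\langle f_n-f_m,A_+(f_n-f_m)\rangle\leq\|f_n-f_m\|\,\|A_+(f_n-f_m)\|\to 0,
\end{equation*}
so $(V_K^{1/2}f_n)$ is Cauchy; since $V_K^{1/2}$ is closed, this shows $f\in\mathcal{D}(V_K^{1/2})$ with $V_K^{1/2}f_n\to V_K^{1/2}f$, whence $\Real\langle f,\overline{A_+}f\rangle=\|V_K^{1/2}f\|^2$. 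This step simultaneously establishes the inclusion $\mathcal{D}(\overline{A_+})\subset\mathcal{D}(V_K^{1/2})$ that the cross term needs. The diagonal term in $v$ is immediate: $\Real\langle v,A_-^* v\rangle=\|V_K^{1/2}v\|^2+q(v)$ is just the definition \eqref{eq:simmern} of $q$, which is meaningful because $\mathcal{V}\subset\mathcal{D}(V_K^{1/2})$ by Proposition \ref{prop:accretivetelemann}.

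The cross term is the crux. Using the approximating sequence again and continuity of the inner product, I get $\langle f,A_-^* v\rangle=\lim_n\langle f_n,A_-^* v\rangle=\lim_n\langle A_- f_n,v\rangle$, where I have used $f_n\in\mathcal{D}(A_-)$ and the adjoint relation. Writing $A_- f_n=2Vf_n-A_+ f_n$, observing that $\langle Vf_n,v\rangle=\langle V_K^{1/2}f_n,V_K^{1/2}v\rangle\to\langle V_K^{1/2}f,V_K^{1/2}v\rangle$ (valid since $f_n\in\mathcal{D}(V)\subset\mathcal{D}(V_K)$ and $v\in\mathcal{D}(V_K^{1/2})$) while $\langle A_+ f_n,v\rangle\to\langle\overline{A_+}f,v\rangle$, I obtain
\begin{equation*}
\langle f,A_-^* v\rangle+\langle v,\overline{A_+}f\rangle=2\langle V_K^{1/2}f,V_K^{1/2}v\rangle-\langle\overline{A_+}f,v\rangle+\langle v,\overline{A_+}f\rangle.
\end{equation*}
The last two terms equal $z-\overline{z}$ with $z=\langle v,\overline{A_+}f\rangle$, hence are purely imaginary, so taking real parts leaves exactly $2\Real\langle V_K^{1/2}f,V_K^{1/2}v\rangle$. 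Assembling the three pieces gives $\|V_K^{1/2}f\|^2+\|V_K^{1/2}v\|^2+2\Real\langle V_K^{1/2}f,V_K^{1/2}v\rangle+q(v)=\|V_K^{1/2}(f+v)\|^2+q(v)$, which is \eqref{eq:leberkaas}. I expect the main obstacle to be the closure step: justifying that convergence of $A_+ f_n$ in $\mathcal{H}$ forces convergence of the form values $V_K^{1/2}f_n$ and that the limit lies in $\mathcal{D}(V_K^{1/2})$. Everything downstream — the cross-term cancellation and the final completion of the square — is then routine algebra relying only on the symmetry of $S$ and $V$ and on $V_K$ being a selfadjoint extension of $V$.
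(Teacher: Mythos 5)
Your proof is correct. Note that the paper itself does not prove Lemma \ref{lemma:cernohorsky} at all --- it simply cites \cite[Lemma 5.1]{FNW} (up to multiplication by $\pm i$), so your argument is a genuinely self-contained derivation of a result the paper takes as imported. The decomposition into the two diagonal terms and the cross term is the natural one, and each step checks out: the identity $\Real\langle g,A_+g\rangle=\langle g,Vg\rangle=\|V_K^{1/2}g\|^2$ on $\mathcal{D}(V)$ uses only the symmetry of $S$ and $V$ and $V\subset V_K$; the passage to $f\in\mathcal{D}(\overline{A_+})$ via the Cauchy estimate $\|V_K^{1/2}(f_n-f_m)\|^2\leq\|f_n-f_m\|\,\|A_+(f_n-f_m)\|$ together with the closedness of $V_K^{1/2}$ is exactly the right mechanism (and is the same device the paper uses in Theorem \ref{prop:festnumrange} for $\mathcal{D}(A_{+,F})\subset\mathcal{D}(V_F^{1/2})$); and the cross-term computation via $A_-f_n=2Vf_n-A_+f_n$ and the adjoint relation $\langle f_n,A_-^*v\rangle=\langle A_-f_n,v\rangle$ correctly isolates $2\Real\langle V_K^{1/2}f,V_K^{1/2}v\rangle$ after the purely imaginary part $z-\overline{z}$ drops out. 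One small virtue of your route worth keeping: by working directly with $V_K^{1/2}$ rather than $V_F^{1/2}$ you avoid invoking the identity $\|V_F^{1/2}\cdot\|=\|V_K^{1/2}\cdot\|$ on $\mathcal{D}(V_F^{1/2})$, which in the paper is justified via \eqref{eq:alonsosimon} under the stronger hypothesis $V\geq\varepsilon>0$, whereas Proposition \ref{prop:accretivetelemann} only assumes $V\geq 0$.
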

\subsection{The Friedrichs extension of a sectorial operator} \label{subsec:Friedrichs}
The Friedrichs extension of a sectorial operator will play an important role for the following results. Since it is mainly a form construction, let us recall a few important definitions.
We begin with the definition of a closable quadratic form:
\begin{definition}[{Closable quadratic form, cf.\ \cite[VI, \S 1, Sec.\ 4]{Kato}}]
Let $q$ be a quadratic form. Then, $q$ is called closable if and only if for any sequence $\{f_n\}_{n=1}^\infty\subset\mathcal{D}(q)$, we have that if
$$ \|f_n\|\overset{n\rightarrow\infty}{\longrightarrow}0\quad\text{and}\quad q(f_n-f_m)\overset{n,m\rightarrow\infty}{\longrightarrow}0\:,$$
then this implies that $$q(f_n)\overset{n\rightarrow\infty}{\longrightarrow}0\:.$$ \label{def:closable}
\end{definition}
\begin{remark} \normalfont If $q$ is closable, its closure $q'$ is given by \cite[VI, Thm.\ 1.17]{Kato}
\begin{align*}
q':\quad\mathcal{D}(q')&=\{ f\in\mathcal{H}: \exists \{f_n\}_{n=1}^\infty\subset\mathcal{D}(q)\:\: s.t.\:\: \|f_n-f\|\overset{n\rightarrow\infty}{\longrightarrow}0\:\:\text{and}\:\: q(f_n-f_m)\overset{n,m\rightarrow\infty}{\longrightarrow} 0\}\\
q'(f)&=\lim_{n\rightarrow\infty}q(f_n)\:.
\end{align*}
\end{remark}
For a sectorial operator $A$, we can define its Friedrichs extension $A_F$. In the literature (e.g. in \cite{Kato}), this is usually done for sectorial operators with angle $\eta$, i.e. for operators which have numerical range contained in the set $\{z\in\C:-\eta\leq\arg(z)\leq \eta\}$ for some $0\leq\eta<\frac{\pi}{2}$, but we will give a proof for our more general notion of sectorial operators:
 
\begin{proposition} \label{thm:katocool}
Let $T$ be sectorial and let $s_T$ be the quadratic form induced by $T$, i.e.\
\begin{align*}
s_T:\quad\mathcal{D}(s_T)&=\mathcal{D}(T)\\
\psi&\mapsto s_T(\psi):=\langle \psi,T\psi\rangle\:.
\end{align*}
Then, $s_T$ is closable, where we denote its closure by $s_{T_F}$. The form domain $\mathcal{Q}(T)$ of $s_{T_F}$ is $\mathcal{Q}(T):=\overline{\mathcal{D}(T)}^{\|\cdot\|_T}$, with the norm $\|\cdot\|_T$ being given by 
\begin{equation} \label{eq:realnorm}
\|\psi\|_{T}^2:=\|\psi\|^2+\Real \langle \psi,e^{i\varphi}T\psi\rangle\:,
\end{equation}
where $e^{i\varphi}$ is any complex phase such that $\left(e^{i(\varphi\pm\varepsilon)}T\right)$ is still accretive for a sufficiently small $\varepsilon>0$. The form domain $\mathcal{Q}(T)$ does not depend on the specific choice of $\varphi$.
The Friedrichs extension of $T$ --- denoted by $T_F$ --- is the operator associated to $s_{T_F}$, i.e. it is given by
\begin{align*}
T_F:\quad\mathcal{D}(T_F)&=\{f\in\mathcal{Q}(T):\exists w\in\mathcal{H}\:\: s.t.\:\:\forall g\in\mathcal{Q}(T):\:\: s_{T_F}(f,g)=\langle w,g\rangle\}\\
f&\mapsto w\:.
\end{align*}
Here, $s_{T_F}(\cdot,\cdot)$ denotes the sesquilinear form associated to $s_{T_F}$ that can be obtained by polarization.

 The operator $T_F$ is maximally sectorial and the closures of the numerical ranges of $T$ and $T_F$ coincide.

 Moreover, we have the following description of $T_F^*$:
 \begin{align} \label{eq:adjointstar} 
 T_F^*:\quad\mathcal{D}(T_F^*)&=\mathcal{Q}(T)\cap\mathcal{D}(T^*)\notag\\
 T_F^*&=T^*\upharpoonright_{\mathcal{D}(T_F^*)}\:.
 \end{align}
\end{proposition}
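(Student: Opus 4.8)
The plan is to reduce everything to the classical theory of strictly sectorial forms (Kato, Chapter VI) by an auxiliary rotation, and then transport the conclusions back by multiplying with a unimodular constant. Since $T$ is sectorial in the sense of Definition \ref{def:sectorial}, its numerical range $\mathcal{N}(T)$ is contained in a sector of opening angle strictly less than $\pi$ inside the closed right half-plane. Hence the set of phases $\vartheta$ for which $e^{i\vartheta}T$ is accretive is an arc of positive length, and I would pick $\varphi$ in its interior so that $e^{i(\varphi\pm\varepsilon)}T$ is accretive for some $\varepsilon>0$; equivalently, $T_\varphi:=e^{i\varphi}T$ is sectorial in the usual (strict) sense of \cite[Chap.\ V, \S 3]{Kato}, with $\overline{\mathcal{N}(T_\varphi)}$ contained in a sector $\{z:|\arg z|\le\tilde\eta\}$ with $\tilde\eta<\pi/2$. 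For such a strictly sectorial operator the classical results apply verbatim.

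First I would treat closability and the form domain. Writing $s_T=e^{-i\varphi}s_{T_\varphi}$, closability of $s_T$ is equivalent to that of $s_{T_\varphi}$, which holds by \cite[VI, Thm.\ 1.27]{Kato}, and its closure is $s_{T_F}=e^{-i\varphi}\,\overline{s_{T_\varphi}}$. For strictly sectorial $T_\varphi$ the real part $\tilde h_\varphi(\psi):=\Real\langle\psi,e^{i\varphi}T\psi\rangle$ dominates the imaginary part, so that $\tilde h_\varphi(\psi)\le|\langle\psi,T\psi\rangle|\le(\sec\tilde\eta)\,\tilde h_\varphi(\psi)$; consequently the norm $\|\cdot\|_T$ of \eqref{eq:realnorm} is equivalent, on $\mathcal{D}(T)$, to the intrinsic norm $\psi\mapsto(\|\psi\|^2+|\langle\psi,T\psi\rangle|)^{1/2}$, which does not involve $\varphi$. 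This is the step where the generalized notion of sectoriality matters, and I expect it to be the main point to get right: it simultaneously yields closability, identifies $\mathcal{Q}(T)=\overline{\mathcal{D}(T)}^{\|\cdot\|_T}$ as a genuine subspace of $\mathcal{H}$, and shows that $\mathcal{Q}(T)$ is independent of the admissible choice of $\varphi$.

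Next I would apply Kato's first representation theorem \cite[VI, Thm.\ 2.1]{Kato} to the closed sectorial form $\overline{s_{T_\varphi}}=e^{i\varphi}s_{T_F}$, obtaining a maximally sectorial operator $T_{\varphi,F}\supset T_\varphi$; undoing the rotation defines $T_F:=e^{-i\varphi}T_{\varphi,F}$, which is exactly the operator associated with $s_{T_F}$ through the representation formula in the statement, and which extends $T$. Because $\overline{\mathcal{N}(T_{\varphi,F})}=\overline{\mathcal{N}(T_\varphi)}=e^{i\varphi}\overline{\mathcal{N}(T)}$, multiplying by $e^{-i\varphi}$ gives $\overline{\mathcal{N}(T_F)}=\overline{\mathcal{N}(T)}$, which proves the coincidence of the numerical ranges and, in particular, that $T_F$ is closed, accretive and sectorial. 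To see that $T_F$ is maximally accretive I would use the range characterization: among the admissible phases one can always select $\varphi$ with $\cos\varphi>0$, and then $-e^{i\varphi}$ lies in the open left half-plane, hence in $\rho(T_{\varphi,F})$; therefore $-1\in e^{-i\varphi}\rho(T_{\varphi,F})=\rho(T_F)$, so $\ran(T_F+\idty)=\mathcal{H}$ and $T_F$ is maximally accretive, i.e.\ maximally sectorial.

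Finally, the description \eqref{eq:adjointstar} of $T_F^*$ follows by the same rotation. In the strictly sectorial case one has $T_{\varphi,F}^*=T_\varphi^*\!\upharpoonright_{\mathcal{Q}(T)\cap\mathcal{D}(T_\varphi^*)}$ (the form domains of $T$ and $T_\varphi$ agree); using $(e^{i\varphi}T)^*=e^{-i\varphi}T^*$ and $T_F^*=e^{i\varphi}T_{\varphi,F}^*$ together with $\mathcal{D}(T_\varphi^*)=\mathcal{D}(T^*)$ then gives $T_F^*=T^*\!\upharpoonright_{\mathcal{Q}(T)\cap\mathcal{D}(T^*)}$. The only genuine obstacle beyond bookkeeping is the equivalence of norms in the second paragraph, together with the verification that the rotated-back operator remains maximally accretive even when $\mathcal{N}(T)$ touches the imaginary axis; both are handled by exploiting the strict sectoriality of $T_\varphi$ and the freedom in choosing $\varphi$.
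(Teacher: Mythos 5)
Your proposal is correct and follows essentially the same route as the paper: reduce to Kato's strictly sectorial theory by rotating with a suitable phase $e^{i\varphi}$ (which is exactly the paper's device for the extremal case), transport closability, the representation theorem, maximal sectoriality and the numerical-range statement back by the inverse rotation, establish $\varphi$-independence of $\mathcal{Q}(T)$ via norm equivalence, and invoke the known description of the adjoint of the Friedrichs extension in the strictly sectorial case. The paper merely compresses these steps into citations of \cite[VI, Thms.\ 1.18, 1.27, 2.1, Cor.\ 2.3, 2.4]{Kato} and \cite{Arlinskii97}, whereas you spell out the bookkeeping (the intermediate norm $\psi\mapsto(\|\psi\|^2+|\langle\psi,T\psi\rangle|)^{1/2}$ and the resolvent point $-1$) explicitly; both are sound.
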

\begin{proof}
For the construction of the maximally sectorial Friedrichs extension, we refer to \cite[VI, Theorem 1.27, Theorem 2.1, Corollary 2.4 and VI, \S 2.3]{Kato}. There is only a slight subtlety for the extremal case \eqref{eq:extremal}, in which case one has to choose a nonzero $\varphi$ in \eqref{eq:realnorm}. It is not hard to see that $\mathcal{Q}(T)$ does not depend on the specific choice of $\varphi$ as long as $\left(e^{i(\varphi\pm\varepsilon)}T\right)$ is still accretive for a sufficiently small $\varepsilon>0$, since the norms 
 $\psi\mapsto\|\psi\|^2+\Real \langle \psi,e^{i\varphi_1}T\psi\rangle$ and $\psi\mapsto\|\psi\|^2+\Real \langle \psi,e^{i\varphi_2}T\psi\rangle$ are equivalent for any such $\varphi_1$ and $\varphi_2$.\\
The assertion about the closures of the numerical ranges of $T$ and $T_F$ coinciding follows from \cite[VI, Theorem 1.18 and Corollary 2.3]{Kato}.\\
For \eqref{eq:adjointstar}, cf.\ \cite[Remarks right after Thm.\ 1]{Arlinskii97}.
\end{proof}
Now, consider dual pairs $(A_+,A_-)$ as described in Proposition \ref{prop:accretivetelemann}. Let $A_{\pm,F}$ denote the Friedrichs extension of $A_\pm$ respectively. Let us now show that $A_{+,F}=(A_{-,F})^*$:
\begin{theorem} \label{prop:friedrichsfreude}
Let $(A_+,A_-)$ be dual pair of sectorial operators of the form $A_\pm=\pm iS+V$, where $S$ and $V$ are as in Proposition \ref{prop:accretivetelemann}. We then have $A_{+,F}=(A_{-,F})^*$, which implies in particular that $A_{+,F}$ is a proper maximally sectorial extension of the dual pair $(A_+,A_-)$.
\end{theorem}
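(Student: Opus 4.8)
The plan is to build both Friedrichs extensions from the quadratic forms $s_{A_\pm}(\psi)=\langle\psi,A_\pm\psi\rangle$ and then to identify $A_{+,F}$ with the restriction of $A_-^*$ described in \eqref{eq:adjointstar}. Writing $a(\psi):=\langle\psi,V\psi\rangle\geq\varepsilon\|\psi\|^2$ and $b(\psi):=\langle\psi,S\psi\rangle$, both of which are real since $S$ and $V$ are symmetric, we have $s_{A_\pm}(\psi)=a(\psi)\pm i\,b(\psi)$; in particular $\Real s_{A_+}(\psi)=\Real s_{A_-}(\psi)=a(\psi)$ and $s_{A_-}(\psi)=\overline{s_{A_+}(\psi)}$. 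These two observations — equal real parts and conjugate forms — are what drive the whole argument.

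First I would show that the two Friedrichs form domains coincide, $\mathcal{Q}(A_+)=\mathcal{Q}(A_-)=:\mathcal{Q}$. Computing $\Real\langle\psi,e^{i\varphi}A_\pm\psi\rangle=a(\psi)\cos\varphi\mp b(\psi)\sin\varphi$, one sees that $(e^{i\theta}A_+)$ is accretive if and only if $(e^{-i\theta}A_-)$ is accretive, so the phases $\varphi$ admissible for $A_-$ in \eqref{eq:realnorm} are exactly the negatives of those admissible for $A_+$. Choosing $\varphi_-=-\varphi_+$ then gives $\|\psi\|_{A_-}^2=\|\psi\|^2+a(\psi)\cos\varphi_+-b(\psi)\sin\varphi_+=\|\psi\|_{A_+}^2$, so the two form norms agree on the common core $\mathcal{D}(S)=\mathcal{D}(V)$ and hence have the same completion $\mathcal{Q}$. (In the non-extremal case one may simply take $\varphi_\pm=0$, in which case both norms reduce to the Friedrichs norm $\|\psi\|^2+a(\psi)$ of $V$.)

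Next I would apply \eqref{eq:adjointstar} with $T=A_-$ to obtain $(A_{-,F})^*=A_-^*\upharpoonright_{\mathcal{Q}(A_-)\cap\mathcal{D}(A_-^*)}$, and I would identify $A_{+,F}$ with the very same restriction. The dual pair relation $\langle A_-g,\psi\rangle=\langle g,A_+\psi\rangle$, which holds for $g,\psi\in\mathcal{D}(S)=\mathcal{D}(V)$ and extends by continuity in $\psi$, yields $s_{A_{+,F}}(g,\psi)=\langle A_-g,\psi\rangle$ for every $g\in\mathcal{D}(A_+)$ and $\psi\in\mathcal{Q}$. By the first representation theorem underlying Proposition \ref{thm:katocool}, $\psi\in\mathcal{D}(A_{+,F})$ precisely when $\psi\in\mathcal{Q}$ and $g\mapsto s_{A_{+,F}}(g,\psi)$ extends to an $\mathcal{H}$-bounded functional; since $\langle A_-g,\psi\rangle=\langle g,A_-^*\psi\rangle$ exactly when $\psi\in\mathcal{D}(A_-^*)$, this occurs if and only if $\psi\in\mathcal{Q}\cap\mathcal{D}(A_-^*)$, and then $A_{+,F}\psi=A_-^*\psi$. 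Hence $A_{+,F}=A_-^*\upharpoonright_{\mathcal{Q}\cap\mathcal{D}(A_-^*)}$, which together with $\mathcal{Q}=\mathcal{Q}(A_-)$ gives $A_{+,F}=(A_{-,F})^*$.

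Finally, $A_{+,F}$ is maximally sectorial by Proposition \ref{thm:katocool}; since $A_+\subset A_{+,F}$ and $A_{+,F}=(A_{-,F})^*\subset A_-^*$ (because $A_-\subset A_{-,F}$ forces $(A_{-,F})^*\subset A_-^*$), the operator $A_{+,F}$ is a proper maximally sectorial extension of the dual pair $(A_+,A_-)$. I expect the main obstacle to be the identification in the third step: pinning down both the domain and the action of $A_{+,F}$ from its form while keeping the inner-product and sesquilinear-form conventions consistent — the equality of form domains and the concluding maximality remark are comparatively routine. A slightly cleaner but equivalent route avoids \eqref{eq:adjointstar} altogether: the relation $s_{A_-}(\phi,\psi)=\overline{s_{A_+}(\psi,\phi)}$ passes from $\mathcal{D}(V)$ to all of $\mathcal{Q}$ by continuity, so the closed forms $s_{A_{+,F}}$ and $s_{A_{-,F}}$ are mutually adjoint, whence their associated operators are mutually adjoint and $A_{+,F}=(A_{-,F})^*$.
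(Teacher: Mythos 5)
Your proposal is correct and follows essentially the same route as the paper: both rest on the identity $\Real\langle\psi,A_+\psi\rangle=\langle\psi,V\psi\rangle=\Real\langle\psi,A_-\psi\rangle$ to conclude $\mathcal{Q}(A_+)=\mathcal{Q}(A_-)$, and then on the fact that $s_{A_{+,F}}$ and $s_{A_{-,F}}$ are mutually adjoint closed forms, so that the associated operators are mutually adjoint --- the paper cites \cite[VI, Thm.\ 2.5]{Kato} for this, which is exactly your closing ``cleaner route''. Your primary route through \eqref{eq:adjointstar} and the first representation theorem is a harmless variant of the same idea, and your treatment of the extremal case (pairing each admissible phase $\varphi$ for $A_+$ with $-\varphi$ for $A_-$) accomplishes what the paper does by rotating the dual pair.
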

\begin{proof}
Firstly, observe that if in the extremal case \eqref{eq:extremal}, we can always consider sectorial dual pairs of the form $(e^{i\varphi} A_+, e^{-i\varphi} A_-)$, where $\varphi$ is chosen such that we are not in the extremal case \eqref{eq:extremal}. If we show that $(e^{i\varphi} A_+)_F=(e^{-i\varphi} A_-)_F^*$, this readily implies that $A_{+,F}=(A_{-,F})^*$. It is thus sufficient to only consider the non-extremal case. This means that for the norms induced by $A_\pm$ as in Equation \eqref{eq:realnorm}, we can always make the choice $\varphi= 0$, i.e.
\begin{equation*}
\|\psi\|_{A_\pm}^2=\|\psi\|^2+\Real\langle\psi,A_{\pm}\psi\rangle\:.
\end{equation*}
These two norms are easily shown to be equal for any $\psi\in\mathcal{D}(A_+)=\mathcal{D}(A_-)=\mathcal{D}(S)=\mathcal{D}(V)$ since
\begin{equation} \label{eq:mindestlohn}
\Real\langle \psi,A_+\psi\rangle=\Real\langle \psi,(iS+V)\psi\rangle=\langle \psi,V\psi\rangle=\Real\langle \psi,(-iS+V)\psi\rangle=\Real\langle \psi,A_-\psi\rangle\:.
\end{equation}
Now, let $s_{A_{\pm}}$ denote the quadratic quadratic forms given by:
\begin{align*}
s_{A_{\pm}}:\qquad\mathcal{D}(s_{A_{\pm}})&=\mathcal{D}(A_\pm)\\
\psi&\mapsto\langle \psi,A_\pm \psi\rangle
\end{align*}
and let $s_{A_{\pm}}(\cdot,\cdot)$ denote the associated sesquilinear form, which is obtained from polarization.
Moreover, let $s_{A_{\pm,F}}$ denote the respective closures of the quadratic forms $s_{A_\pm}$.
Now, Equation \eqref{eq:mindestlohn} implies that the form domains as described in Proposition \ref{thm:katocool} are equal, since $$\mathcal{Q}(A_+)=\overline{\mathcal{D}(A_+)}^{\|\cdot\|_{A_+}}=\overline{\mathcal{D}(A_-)}^{\|\cdot\|_{A_-}}=\mathcal{Q}(A_-)\:.$$
Thus, for any $\phi,\psi\in\mathcal{Q}(A_+)=\mathcal{Q}(A_-)$, there exist sequences $\{\phi_n\}_{n=1}^\infty,\{\psi_n\}_{n=1}^\infty\subset\mathcal{D}(A_+)=\mathcal{D}(A_-)$ such that 
\begin{align*}
s_{A_{+,F}}(\phi,\psi)&=\lim_{n\rightarrow\infty}s_{A_{+,F}}(\phi_n,\psi_n)=\lim_{n\rightarrow\infty}\langle\phi_n,A_+\psi_n\rangle=\lim_{n\rightarrow\infty}\langle\phi_n,(iS+V)\psi_n\rangle=\overline{\lim_{n\rightarrow\infty}\langle\psi_n,(-iS+V)\phi_n\rangle}\\&=\overline{\lim_{n\rightarrow\infty}\langle\psi_n,A_-\phi_n\rangle}=\overline{s_{A_{-,F}}(\psi,\phi)}=s_{(A_{-,F})^*}(\phi,\psi)\:,
\end{align*}
where the last equality follows from \cite[VI, Thm.\ 2.5]{Kato}. This implies that $A_{+,F}=(A_{-,F})^*$. Now, since $A_\pm\subset A_{\pm,F}$, this immediately yields
\begin{equation*}
A_+\subset A_{+,F}=(A_{-,F})^*\subset A_-^*\:,
\end{equation*}
showing that $A_{+,F}$ is a proper extension of $(A_+,A_-)$ and thus finishing the proof.
\end{proof} 
\begin{remark} We like to point out that for a generic dual pair $(A,\widetilde{A})$ of sectorial operators, it is not necessarily true that $A_F^*=\widetilde{A}_F$. For instance, let $\mathcal{H}=L^2(0,1)$ and consider the two symmetric operators $S$ and $\widetilde{S}$ given by
\begin{align*}
S:\qquad\mathcal{D}(S)&=\{f\in H^2(0,1): f(0)=f'(0)=f(1)=f'(1)=0\},\quad f\mapsto -f''\\
\widetilde{S}:\qquad\mathcal{D}(\widetilde{S})&=\{f\in H^2(0,1): f(0)=f'(0)=f'(1)=0\}, \qquad\qquad f\mapsto -f''
\end{align*}
and let $V\geq 0$ be an arbitrary non-negative bounded operator. Defining $A:=S+iV$ and $\widetilde{A}=\widetilde{S}-iV$, it is not hard to see that $(A,\widetilde{A})$ is a dual pair. However, since $S_F$ is the Laplacian on $(0,1)$ with Dirichlet boundary conditions at both endpoints of the interval while $\widetilde{S}_F$ is the Laplacian with a Dirichlet condition at $0$ and a Neumann condition at $1$, this implies that $S_F\neq \widetilde{S}_F$ and consequently $A_F^*\neq \widetilde{A}_F$.
\end{remark}
Now, let us combine the results of Theorem \ref{prop:friedrichsfreude} and Proposition \ref{thm:EEvM} in order to find a convenient description of $\mathcal{D}(A_-^*)$:
\begin{corollary} Let $A_\pm=\pm iS+V$, where $S$ and $V\geq\varepsilon>0$ are both symmetric and such that $A_\pm$ are sectorial. Moreover, assume that their domains satisfy $\mathcal{D}(S)=\mathcal{D}(V)$. Then the domain of $A_-^*$ is given by
\begin{equation} \label{eq:domainformula}
\mathcal{D}(A_-^*)=\mathcal{D}(\overline{A_+})\dot{+}A_{+,F}^{-1}\ker(A_+^*)\dot{+}\ker(A_-^*)\:.
\end{equation} 
\label{coro:domain}
\end{corollary}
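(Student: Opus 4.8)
The plan is to obtain \eqref{eq:domainformula} directly from Grubb's decomposition (Proposition \ref{thm:EEvM}) applied to the dual pair $(A_+,A_-)$ at the spectral point $\lambda=0$, using the Friedrichs extension $A_{+,F}$ as the distinguished boundedly invertible proper extension that enters that formula. With $A=A_+$, $\widetilde{A}=A_-$ and $\lambda=\overline\lambda=0$, the right-hand side of \eqref{eq:malamud} becomes $\mathcal{D}(\overline{A_+})\dot{+}\widehat{A}^{-1}\ker(A_+^*)\dot{+}\ker(A_-^*)$ for a proper extension $\widehat{A}$ with $0\in\rho(\widehat{A})$, so everything reduces to checking the hypotheses of Proposition \ref{thm:EEvM} and to verifying that $A_{+,F}$ may be taken as $\widehat{A}$.

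First I would check that $0\in\widehat\rho(A_+)$ and $0=\overline 0\in\widehat\rho(A_-)$. This follows at once from the strict lower bound on $V$: for every $f\in\mathcal{D}(A_\pm)=\mathcal{D}(V)$, the Cauchy--Schwarz inequality together with the identity $\Real\langle f,A_\pm f\rangle=\langle f,Vf\rangle\geq\varepsilon\|f\|^2$ gives $\|A_\pm f\|\,\|f\|\geq\varepsilon\|f\|^2$, hence $\|A_\pm f\|\geq\varepsilon\|f\|$, so that $c(0)=\varepsilon$ serves in the definition of the field of regularity.

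The substantive step is to show that $A_{+,F}$ qualifies as the extension $\widehat{A}$, i.e.\ that it is a proper extension of the dual pair with $0$ in its resolvent set. Properness, $\overline{A_+}\subset A_{+,F}\subset A_-^*$, is precisely Theorem \ref{prop:friedrichsfreude}. For $0\in\rho(A_{+,F})$ I would argue as follows. By Proposition \ref{thm:katocool} the closure of the numerical range of $A_{+,F}$ equals that of $A_+$, and since $\Real\langle f,A_+f\rangle=\langle f,Vf\rangle\geq\varepsilon\|f\|^2$ this closed set lies in the half-plane $\{z:\Real z\geq\varepsilon\}$; in particular $\Real\langle f,A_{+,F}f\rangle\geq\varepsilon\|f\|^2$ for all $f\in\mathcal{D}(A_{+,F})$, so $A_{+,F}$ is bounded below, $\|A_{+,F}f\|\geq\varepsilon\|f\|$, and therefore injective with closed range. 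Because $A_{+,F}$ is maximally accretive (it is maximally sectorial by Theorem \ref{prop:friedrichsfreude}), the open left half-plane lies in $\rho(A_{+,F})$; since the deficiency $\dim\ker(A_{+,F}^*-\overline z)$ is constant on the connected region $\{z:\Real z<\varepsilon\}$, which both meets the resolvent set and avoids the closed numerical range, it must vanish throughout that region, whence $\{z:\Real z<\varepsilon\}\subset\rho(A_{+,F})$ and in particular $0\in\rho(A_{+,F})$.

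Finally I would insert $\widehat{A}=A_{+,F}$ and $\lambda=\overline\lambda=0$ into \eqref{eq:malamud}; the middle and last summands then read $A_{+,F}^{-1}\ker(A_+^*)$ and $\ker(A_-^*)$, which is exactly \eqref{eq:domainformula}. The main point to be careful about is that Proposition \ref{thm:EEvM} is phrased existentially, so one must note that its direct-sum decomposition is valid for \emph{any} proper extension with $0$ in its resolvent set --- a short computation using $\mathcal{D}(A_-^*)=\mathcal{D}(A_{+,F})\dot{+}\ker(A_-^*)$ and $\mathcal{D}(A_{+,F})=\mathcal{D}(\overline{A_+})\dot{+}A_{+,F}^{-1}\ker(A_+^*)$ --- so that the concrete choice $\widehat{A}=A_{+,F}$ is legitimate; the only genuinely analytic input beyond this bookkeeping is the verification $0\in\rho(A_{+,F})$ carried out above.
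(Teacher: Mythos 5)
Your proposal is correct and follows essentially the same route as the paper: both rest on Proposition \ref{thm:EEvM} at $\lambda=0$, the properness of $A_{+,F}$ from Theorem \ref{prop:friedrichsfreude}, the fact that $0\in\rho(A_{+,F})$, and the decomposition $\mathcal{D}(A_{+,F})=A_{+,F}^{-1}\bigl(\ran(\overline{A_+})\oplus\ker(A_+^*)\bigr)=\mathcal{D}(\overline{A_+})\dot{+}A_{+,F}^{-1}\ker(A_+^*)$; the only cosmetic difference is that the paper passes from the existential extension $\widehat{A}$ of Proposition \ref{thm:EEvM} to $A_{+,F}$ via the correction term $\widehat{A}^{-1}\widehat{k}_+-A_{+,F}^{-1}\widehat{k}_+\in\ker(A_-^*)$, whereas you re-derive the decomposition directly for $A_{+,F}$. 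Your detailed verification of $0\in\rho(A_{+,F})$ (via the numerical range, maximal accretivity and constancy of the deficiency index on the field of regularity) is a welcome addition, since the paper asserts this without proof.
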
 
\begin{proof} Since $V\geq \varepsilon>0$, it follows that $0\in\widehat{\rho}(A_+)\cap\widehat{\rho}(A_-)$, which by Proposition \ref{thm:EEvM} implies that there exists a proper extension $\widehat{A}$ of $(A_+,A_-)$ with $0\in\rho(\widehat{A})$ such that
\begin{equation*}
\mathcal{D}(A_-^*)=\mathcal{D}(\overline{A_+})\dot{+}\widehat{A}^{-1}\ker(A_+^*)\dot{+}\ker(A_-^*)\:.
\end{equation*}
On the other hand, $V\geq\varepsilon>0$ also implies that $0\in\rho(A_{+,F})$, from which we get:
\begin{equation*} 
\mathcal{D}(A_{+,F})=A_{+,F}^{-1}\mathcal{H}=A_{+,F}^{-1}\left(\overline{\ran(A_+)}\oplus\ker(A_+^*)\right)=A_{+,F}^{-1}\ran(\overline{A_+})\dot{+}A_{+,F}^{-1}\ker(A_+^*)=\mathcal{D}(\overline{A_+})\dot{+}A_{+,F}^{-1}\ker(A_+^*)\:,
\end{equation*}
where $\overline{\ran(A_+)}=\ran(\overline{A_+})$ follows from the fact that $0\in\widehat{\rho}(A_+)$.
From Theorem \ref{prop:friedrichsfreude}, we now get that $A_{+,F}$ is a proper extension of $(A_+,A_-)$, which implies in particular that $\mathcal{D}(A_{+,F})=\mathcal{D}(\overline{A_+})\dot{+}A_{+,F}^{-1}\ker(A_+^*)\subset\mathcal{D}(A_-^*)$. Moreover, since it trivially holds that $\ker(A_-^*)\subset\mathcal{D}(A_-^*)$, we get the $``\supset"$ inclusion in \eqref{eq:domainformula}.
Let us now show the $``\subset"$ inclusion in \eqref{eq:domainformula}. To this end, let $\widehat{k}_+\in \ker(A_+^*)$ be arbitrary and consider \[\eta:=\widehat{A}^{-1}\widehat{k}_+-A_{+,F}^{-1}\widehat{k}_+\:,\] which is an element of $\ker(A_-^*)$. Hence, for any $f_+\in\mathcal{D}(\overline{A}_+)$, $\widehat{k}_+\in\ker(A_+^*)$ and $\widehat{k}_-\in\ker(A_-^*)$, we get that
\[ \mathcal{D}(A_-^*)\ni f_++\widehat{A}^{-1}\widehat{k}_++\widehat{k}_-=f_++A_{+,F}^{-1}\widehat{k}_++(\eta+\widehat{k}_-)\in\mathcal{D}(\overline{A_+})\dot{+}A_{+,F}^{-1}\ker(A_+^*)\dot{+}\ker(A_-^*)\:, \]
which shows the corollary.
\end{proof}
\begin{example} Let $\gamma> 0$ and consider the dual pair of operators $(A_+,A_-)$ given by
\begin{align*}
A_\pm:\mathcal{D}(A_\pm)=\mathcal{C}_c^\infty(0,1)\\
(A_\pm f)(x)= \pm i\frac{\gamma}{x^2}f(x)-f''(x)\:.
\end{align*}\
Using Hardy's inequality, it can be shown that the numerical range of $A_+$ is contained in a sector that lies strictly within the right half-plane of $\C$, which means that we are not in the extremal case \eqref{eq:extremal}. We therefore can use the norm induced by the real part of $A_\pm$ in order to construct $\mathcal{Q}(A_\pm)$. Thus, for any $f\in\mathcal{C}_c^\infty(0,1)$, we get
\begin{equation*}
\|f\|_{A_\pm}^2=\|f\|^2+\Real\langle f,A_\pm f\rangle=\|f\|^2+\|f'\|^2=\|f\|_{H^1}^2
\end{equation*}
i.e.\ the first Sobolev norm. Moreover, the form domains are given by
\begin{equation*}
\mathcal{Q}(A_\pm)=\overline{\mathcal{C}_c^\infty(0,1)}^{\|\cdot\|_{H^1}}=H^1_0(0,1)\:.
\end{equation*} 
Now, let us define the numbers $\omega_\pm:=\frac{1\pm\sqrt{1+4i\gamma}}{2}$. Depending on $\gamma$, we then have the following two situations: 
\begin{itemize}
\item $\gamma\geq\sqrt{3}$: In this case, we get
\begin{equation*}
\ker(A_-^*)=\spann\{x^{\omega_+}\}\quad\text{and}\quad\ker(A_+^*)=\spann\{x^{\overline{\omega_+
}}\}\:,
\end{equation*}
since $\Real(\omega_-)\leq-1/2$ for $\gamma\geq\sqrt{3}$. In this case, it can be shown that
\begin{equation} \label{eq:domainbigg}
\mathcal{D}(A_-^*)=\mathcal{D}(\overline{A_+})\dot{+}\spann\{x^{\overline{\omega_+
+2}}\}\dot{+}\spann\{x^{\omega_+}\}\;.
\end{equation}
Hence, using Equation \eqref{eq:adjointstar} from Theorem \ref{thm:katocool} and the equality $A_{+,F}=(A_{-,F})^*$, which was shown in Theorem \ref{prop:friedrichsfreude}, we find that the domain of $A_{+,F}$ is given by
\begin{equation*}
\mathcal{D}(A_{+,F})=\mathcal{D}((A_{-,F})^*)=\mathcal{Q}(A_-)\cap\mathcal{D}(A_-^*)=H^1_0(0,1)\cap\mathcal{D}(A_{-}^*)=\mathcal{D}(\overline{A_+})\dot{+}\spann\{x^{\overline{\omega_++2}}-x^{\omega_+}\}\:.
\end{equation*}
\item $0<\gamma<\sqrt{3}$: In this case, we get
\begin{equation*}
\ker(A_-^*)=\spann\{x^{\omega_+},x^{\omega_-}\}\quad\text{and}\quad\ker(A_+^*)=\spann\{x^{\overline{\omega_+
}},x^{\overline{\omega_-}}\}\:.
\end{equation*}
It can be shown that
\begin{equation} \label{eq:domainsmallg}
\mathcal{D}(A_-^*)=\mathcal{D}(\overline{A_+})\dot{+}\spann\{x^{\overline{\omega_+
+2}},x^{\overline{\omega_-+2}}\}\dot{+}\spann\{x^{\omega_+},x^{\omega_-}\}\;.
\end{equation}
Hence, using Equation \eqref{eq:adjointstar} from Theorem \ref{thm:katocool} and the equality $A_{+,F}=(A_{-,F})^*$, which was shown in Theorem \ref{prop:friedrichsfreude}, we find that the domain of $A_{+,F}$ is given by
\begin{equation*}
\mathcal{D}(A_{+,F})=\mathcal{D}((A_{-,F})^*)=\mathcal{Q}(A_-)\cap\mathcal{D}(A_-^*)=H^1_0(0,1)\cap\mathcal{D}(A_{-}^*)=\mathcal{D}(\overline{A_+})\dot{+}\spann\{x^{\overline{\omega_++2}}-x^{\omega_+},x^{\overline{\omega_-+2}}-x^{\omega_+}\}\:.
\end{equation*}
\end{itemize}
\end{example}
\begin{theorem} \label{prop:festnumrange} Let $A_+=iS+V$ be as in Theorem \ref{prop:friedrichsfreude} and let $A_{+,F}$ be its Friedrichs extension. Then, it is true that $\mathcal{D}(A_{+,F})\subset\mathcal{D}(V_F^{1/2})$ and for any $v\in\mathcal{D}(A_{+,F})$ it holds that
\begin{equation*}
\Real\langle v,A_{+,F}v\rangle=\|V_F^{1/2}v\|^2=\|V_K^{1/2}v\|^2\:.
\end{equation*}
Moreover, this is equivalent to saying that the quadratic form $q$ as defined by
\begin{equation*}
q(v):=\Real\langle v, A_-^*v\rangle-\|V_{K}^{1/2}v\|^2
\end{equation*}
(cf.\ Equation \eqref{eq:simmern}) vanishes identically on $\mathcal{D}(A_{+,F})$, i.e.\
\begin{equation*}
q\upharpoonright_{\mathcal{D}(A_{+,F})}\equiv 0\:.
\end{equation*}
\end{theorem}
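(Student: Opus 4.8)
The plan is to reduce all three claims to the single identity $\Real\langle v,A_{+,F}v\rangle=\|V_F^{1/2}v\|^2$, and to prove this by identifying the real part of the closed sectorial form of $A_{+,F}$ with the form of $V_F$. Recall from Proposition \ref{thm:katocool} that $A_{+,F}$ is the operator associated with $s_{A_{+,F}}$, the closure of $s_{A_+}(\psi)=\langle\psi,A_+\psi\rangle$ taken in the norm $\|\cdot\|_{A_+}$ of \eqref{eq:realnorm}; in particular, for $v\in\mathcal{D}(A_{+,F})$ one has $\Real\langle v,A_{+,F}v\rangle=\Real s_{A_{+,F}}(v)$. On the core $\mathcal{D}(A_+)=\mathcal{D}(V)$, Equation \eqref{eq:mindestlohn} gives $\Real s_{A_+}(\psi)=\langle\psi,V\psi\rangle=\|V_F^{1/2}\psi\|^2$, the last equality because $V\subset V_F$. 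The task is therefore to show that this real part passes correctly to the closure.

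The crucial step, and the main obstacle, is the norm estimate
\begin{equation*}
\langle\psi,V\psi\rangle\leq\tfrac{1}{\sin\varepsilon}\,\|\psi\|_{A_+}^2\qquad(\psi\in\mathcal{D}(A_+))\:,
\end{equation*}
i.e.\ that $\|\cdot\|_{A_+}$ dominates the graph norm of $V_F^{1/2}$. I would obtain it directly from sectoriality: choosing $\varphi$ as in \eqref{eq:realnorm} so that $e^{i(\varphi\pm\varepsilon)}A_+$ is accretive, the rotated numerical range $e^{i\varphi}\mathcal{N}(A_+)$ lies in the closed sector $\{z:|\arg z|\leq\pi/2-\varepsilon\}$, whence by homogeneity $\Real\langle\psi,e^{i\varphi}A_+\psi\rangle\geq\sin\varepsilon\,|\langle\psi,A_+\psi\rangle|\geq\sin\varepsilon\,\langle\psi,V\psi\rangle$. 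This is exactly the point where the extremal case \eqref{eq:extremal} must be handled with a nonzero $\varphi$: there $\langle\psi,S\psi\rangle$ is not controlled by $\langle\psi,V\psi\rangle$ and the naive choice $\varphi=0$ is unavailable, yet the two-sided accretivity still forces the rotated sector strictly into the right half-plane. The estimate yields $\mathcal{Q}(A_+)\subset\mathcal{D}(V_F^{1/2})$ and shows that every $\|\cdot\|_{A_+}$-Cauchy sequence is Cauchy for the graph norm of $V_F^{1/2}$.

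With this in hand the first two equalities follow quickly. For $v\in\mathcal{D}(A_{+,F})\subset\mathcal{Q}(A_+)$, pick $\psi_n\in\mathcal{D}(A_+)$ with $\|\psi_n-v\|_{A_+}\to0$; then $V_F^{1/2}\psi_n$ converges, and by closedness of $V_F^{1/2}$ its limit is $V_F^{1/2}v$, so
\[
\Real\langle v,A_{+,F}v\rangle=\lim_{n\to\infty}\Real s_{A_+}(\psi_n)=\lim_{n\to\infty}\|V_F^{1/2}\psi_n\|^2=\|V_F^{1/2}v\|^2 .
\]
For the identity $\|V_F^{1/2}v\|=\|V_K^{1/2}v\|$ I would invoke the Alonso--Simon description \eqref{eq:alonsosimon} with $V_K=V_{\ker(V^*),{\bf 0}}$: there $\mathcal{D}(V_K^{1/2})=\mathcal{D}(V_F^{1/2})\dot{+}\ker(V^*)$ and $\|V_K^{1/2}(f+\eta)\|^2=\|V_F^{1/2}f\|^2$, so for $v\in\mathcal{D}(V_F^{1/2})$ the (unique) decomposition is $v=v\dot{+}0$ and hence $\|V_K^{1/2}v\|^2=\|V_F^{1/2}v\|^2$.

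Finally, the vanishing of $q$ is immediate: by Theorem \ref{prop:friedrichsfreude} the operator $A_{+,F}$ is a proper extension of the dual pair, so $A_{+,F}v=A_-^*v$ for $v\in\mathcal{D}(A_{+,F})$. Combining this with the equalities just proved,
\[
q(v)=\Real\langle v,A_-^*v\rangle-\|V_K^{1/2}v\|^2=\Real\langle v,A_{+,F}v\rangle-\|V_K^{1/2}v\|^2=0 ,
\]
which is the asserted $q\upharpoonright_{\mathcal{D}(A_{+,F})}\equiv0$.
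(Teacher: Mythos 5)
Your proof is correct and follows the same basic strategy as the paper: approximate $v\in\mathcal{D}(A_{+,F})$ by a sequence from $\mathcal{D}(A_+)$ coming from the form-closure construction, use the identity $\Real\langle\psi,A_+\psi\rangle=\langle\psi,V\psi\rangle$ on the core, and pass to the limit via closedness of $V_F^{1/2}$; the identification $\|V_F^{1/2}v\|=\|V_K^{1/2}v\|$ via the Alonso--Simon formula and the reduction of the statement about $q$ to $A_{+,F}\subset A_-^*$ are likewise identical to the paper's route.

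The one genuine difference is that you make explicit the estimate $\langle\psi,V\psi\rangle\leq\frac{1}{\sin\varepsilon}\|\psi\|_{A_+}^2$, derived from two-sided accretivity of $e^{i(\varphi\pm\varepsilon)}A_+$. The paper instead passes from ``$v_n\to v$ and $\langle v_n,Vv_n\rangle$ converges'' directly to ``$v\in\mathcal{D}(V_F^{1/2})$'', which as literally stated is not a valid inference: one needs the approximating sequence to be Cauchy in the $V$-form norm, not merely that the numbers $\langle v_n,Vv_n\rangle$ converge. In the non-extremal case this Cauchyness is immediate because $\|\cdot\|_{A_+}$ (with $\varphi=0$) \emph{is} the $V$-form norm plus $\|\cdot\|^2$, so the paper's argument is easily repaired; but in the extremal case \eqref{eq:extremal}, where a nonzero $\varphi$ must be used in \eqref{eq:realnorm}, your domination estimate is exactly the ingredient needed to transfer $\|\cdot\|_{A_+}$-Cauchyness to Cauchyness in the graph norm of $V_F^{1/2}$. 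So your write-up is not just correct; it supplies a justification that the paper leaves implicit and that is genuinely required in the extremal case the paper's more general notion of sectoriality is designed to admit.
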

\begin{proof} Take any $v\in\mathcal{D}(A_{+,F})$. This means that there exists a sequence $\{v_n\}_{n=1}^\infty\subset\mathcal{D}(A_+)$ such that 
\begin{equation*}
v_n\rightarrow v\qquad\text{and}\qquad\langle v_n,A_+v_n\rangle\rightarrow\langle v,A_{+,F}v\rangle
\end{equation*}
and thus in particular
$$\Real\langle v_n,A_{+,F}v_n\rangle\rightarrow\Real\langle v,A_{+,F}v\rangle\:.$$
We therefore get 
\begin{align*}
\Real\langle v,A_{+,F}v\rangle=\lim_{n\rightarrow\infty}\Real\langle v_n,(iS+V)v_n\rangle=\lim_{n\rightarrow\infty}\langle v_n,Vv_n\rangle\:.
\end{align*}
Now, since $v_n\rightarrow v$ and $\langle v_n,Vv_n\rangle$ converges, this implies that $v\in\mathcal{D}(V_F^{1/2})$ and moreover that 
\begin{equation*}
\|V_F^{1/2}v\|^2=\lim_{n\rightarrow\infty}\langle v_n,Vv_n\rangle=\Real\langle v,A_{+,F}v\rangle\:.
\end{equation*}
Since $V\geq\varepsilon>0$, the equality $$\|V_F^{1/2}v\|^2=\|V_K^{1/2}v\|^2$$ for $v\in\mathcal{D}(V_F^{1/2})$ follows from \eqref{eq:alonsosimon}.
\end{proof}

\section{Proper maximally accretive extensions} \label{sec:proper}
In the following, we are going to develop an analog of the Birman--Kre\u\i n--Vishik theory of selfadjoint extensions, where we want to define a partial order on the real parts of the different extensions of a dual pair of sectorial operators $A_\pm=\pm iS+V$. It turns out that the proper maximally dissipative extensions of $(A_+,A_-)$ can be parametrized by auxiliary operators $D$ that map from a subspace of $\ker A_-^*\cap\mathcal{D}(V_K^{1/2})$ into $\ker A_+^*$. We will denote these extensions by $A_D$. The results in this section can be viewed as a special case of the results in \cite{FNW}, where the more general case of maximally accretive proper extensions of a given dual pair of accretive operators was considered. However, there are two additions we are able to make to the previously obtained results:

Firstly, Theorem \ref{thm:dido} contains a necessary and sufficient condition for $A_D$ to be maximally dissipative even if $\dim(\mathcal{D}(A_D)/\mathcal{D}(A))=\infty$. 

Secondly, the way we present our results in this section will allow us to  present our results in a way that immediately shows how Proposition \ref{prop:kreinistfein} generalizes to the case we are considering in this paper.   
 
In particular, in Sections \ref{sec:form} and \ref{sec:order} we will show that --- provided it is closable --- the closure of the quadratic form $f\mapsto\Real\langle f,A_Df\rangle$ corresponds to a non-negative selfadjoint extension of the real part $V$. This enables us to apply the results of Birman--Kre\u\i n--Vishik in order to define an order between the real parts of the extensions $A_D$. 

In order to present our result in a way similar to Proposition \ref{prop:kreinistfein}, we  introduce the following modified sesquilinear form.
\begin{definition} \label{def:assumption}
Let $(A_+,A_-)$ be a dual pair, where $A_\pm=\pm iS+V$ with $\mathcal{D}(A_\pm)=\mathcal{D}(S)=\mathcal{D}(V)$, where $S$ and $V\geq \varepsilon>0$ are both symmetric and such that $A_\pm$ are sectorial. Then, we define the following non-Hermitian sesquilinear form:
\begin{align*}
[\cdot,\cdot]:\qquad\mathcal{D}(V_K^{1/2})&\times\mathcal{H}\rightarrow\C\\
[f,g]&:=\langle f,g \rangle-2\langle V_K^{1/2}f,V_K^{1/2}A_{+,F}^{-1}g\rangle
\end{align*}
By Theorem \ref{prop:festnumrange}, we have $\mathcal{D}(A_{+,F})\subset\mathcal{D}(V_F^{1/2})\subset\mathcal{D}(V_K^{1/2})$, which means that $[\cdot,\cdot]$ is well-defined. Moreover, for any subset $\mathcal{A}\subset\mathcal{D}(V_K^{1/2})$, let us define its {\bf{orthogonal companion}}  $\mathcal{A}^{[\perp]}$ as
\begin{equation*}
\mathcal{A}^{[\perp]}:=\{g\in\mathcal{H}: [f,g]=0\:\:\text{for all}\:\: f\in\mathcal{A}\}\:.
\end{equation*}
\end{definition}
\begin{remark} \normalfont It is not necessary to compute $V_K^{1/2}$ explicitly in order to determine $[f,g]$, as it is sufficient to know the action of the quadratic form $\psi\mapsto\|V_K^{1/2}\psi\|^2$. The value of $\langle V_K^{1/2}f,V_K^{1/2}A_{+,F}^{-1}g\rangle$ can then be obtained by polarization.
\end{remark}
Let us now use auxiliary operators $D$ from $\ker A_-^*\cap\mathcal{D}(V_K^{1/2})$ to $\ker A_+^*$ and subspaces $\mathfrak{N}$ of $(\ker A_+^*\cap\mathcal{D}(D)^{[\perp]})$ in order to parametrize the proper accretive extensions of $(A_+,A_-)$:
\begin{theorem} Let $(A_+,A_-)$ be a dual pair of sectorial operators of the form as described in Definition \ref{def:assumption}. Then all proper accretive extensions of $(\overline{A_+},A_-)$ can be described by all  pairs of the form $(D,\mathfrak{N})$, where
\begin{itemize}
\item $D$ is an operator from $\ker A_-^*\cap \mathcal{D}(V_K^{1/2})$ to $\ker A_+^*$ that satisfies
\begin{equation} \label{eq:dissipativity}
\Real [\widetilde{k},D\widetilde{k}]\geq\|V_K^{1/2}\widetilde{k}\|^2\quad\text{for all}\:\:\widetilde{k}\in\mathcal{D}(D)
\end{equation}
\item $\mathfrak{N}\subset \left(\ker A_+^*\cap\mathcal{D}(D)^{[\perp]}\right)$ is a linear subspace.
\end{itemize}
The corresponding accretive extensions are described by
\begin{align*}
A_{D,\mathfrak{N}}:\qquad\mathcal{D}(A_{D,\mathfrak{N}})&=\mathcal{D}(\overline{A}_+)\dot{+}\{A_{+,F}^{-1}D\widetilde{k}+\widetilde{k}: \widetilde{k}\in\mathcal{D}(D)\}\dot{+}\{A_{+,F}^{-1}k: k\in\mathfrak{N}\}\\
A_{D,\mathfrak{N}}&=A_-^*\upharpoonright_{\mathcal{D}(A_{D,\mathfrak{N}})}
\end{align*}
Moreover, $A_{D,\mathfrak{N}}$ is maximally accretive if and only if $\mathfrak{N}=\left(\ker A_+^*\cap\mathcal{D}(D)^{[\perp]}\right)$ and $D$ is maximal in the sense that there exists no extension $D\subset D'$ such that $\ker A_+^*\cap\mathcal{D}(D)^{[\perp]}=\ker A_+^*\cap\mathcal{D}(D')^{[\perp]}$ and $D'$ still satisfies \eqref{eq:dissipativity}.
\label{thm:dido}
\end{theorem}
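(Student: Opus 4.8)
The plan is to reduce everything to the accretivity criterion of Proposition \ref{prop:accretivetelemann} and then to translate the quadratic form $q$ into the sesquilinear form $[\cdot,\cdot]$ of Definition \ref{def:assumption}. By Corollary \ref{coro:domain} every proper extension is obtained by adjoining to $\mathcal{D}(\overline{A_+})$ a subspace $\mathcal{V}\subset A_{+,F}^{-1}\ker(A_+^*)\dot{+}\ker(A_-^*)$, and the candidate domain in the theorem corresponds to the special choice spanned by the vectors $A_{+,F}^{-1}D\widetilde{k}+\widetilde{k}$ and $A_{+,F}^{-1}k$. First I would record the basic action: for $v=A_{+,F}^{-1}g+\widetilde{k}$ with $g:=D\widetilde{k}+k\in\ker(A_+^*)$ and $\widetilde{k}\in\ker(A_-^*)\cap\mathcal{D}(V_K^{1/2})$, one has $A_-^*v=g$ because $A_{+,F}\subset A_-^*$ and $\widetilde{k}\in\ker(A_-^*)$. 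Expanding $q(v)=\Real\langle v,A_-^*v\rangle-\|V_K^{1/2}v\|^2$ and using the identity $\Real\langle A_{+,F}^{-1}g,g\rangle=\|V_K^{1/2}A_{+,F}^{-1}g\|^2$ from Theorem \ref{prop:festnumrange}, the two terms involving $A_{+,F}^{-1}g$ alone cancel, leaving the clean formula $q(v)=\Real[\widetilde{k},D\widetilde{k}]+\Real[\widetilde{k},k]-\|V_K^{1/2}\widetilde{k}\|^2$. This single computation is the \emph{engine} of the whole proof.

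With this identity in hand, the equivalence is routine in both directions. Given a pair $(D,\mathfrak{N})$ with $\mathfrak{N}\subset\ker(A_+^*)\cap\mathcal{D}(D)^{[\perp]}$, the cross term $\Real[\widetilde{k},k]$ vanishes, so $q(v)\geq 0$ for all $v$ reduces precisely to the dissipativity condition \eqref{eq:dissipativity}, and Proposition \ref{prop:accretivetelemann} then yields accretivity. Conversely, starting from an arbitrary proper accretive extension $A_{+,\mathcal{V}}$, I would use Corollary \ref{coro:domain} to assume $\mathcal{V}$ has no $\mathcal{D}(\overline{A_+})$-component, set $\mathcal{V}_0:=\mathcal{V}\cap A_{+,F}^{-1}\ker(A_+^*)$ and let $\mathfrak{N}$ be its image under $A_{+,F}$, and define $D$ on $\mathcal{D}(D):=\{\widehat{k}_-(v):v\in\mathcal{V}\}$ by lifting along any complement of $\mathcal{V}_0$ in $\mathcal{V}$. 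That $\mathcal{D}(D)\subset\mathcal{D}(V_K^{1/2})$ follows since $\widetilde{k}=v-A_{+,F}^{-1}\widehat{k}_+(v)$ and $\mathcal{D}(A_{+,F})\subset\mathcal{D}(V_K^{1/2})$. Finally, replacing $k$ by $tk$ and by $itk$ for $t\in\R$ in the formula for $q$ and using $q\geq 0$ forces $\Real[\widetilde{k},k]=\Imag[\widetilde{k},k]=0$, i.e.\ $\mathfrak{N}\subset\mathcal{D}(D)^{[\perp]}$, after which the case $k=0$ gives \eqref{eq:dissipativity}. One also checks that changing the lift alters $D$ only by an $\mathfrak{N}$-valued map, which leaves $\Real[\widetilde{k},D\widetilde{k}]$ unchanged because $\mathfrak{N}\subset\mathcal{D}(D)^{[\perp]}$, so the parametrizing data are consistent.

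For the maximality statement I would first argue that, for these closed accretive proper extensions, being maximally accretive is the same as $\mathcal{V}$ being maximal among subspaces of $\mathcal{D}(V_K^{1/2})\cap(A_{+,F}^{-1}\ker(A_+^*)\dot{+}\ker(A_-^*))$ on which $q\geq 0$; this uses the existence of a maximally accretive proper extension (Proposition \ref{thm:friendly}) together with the criterion that a closed accretive operator is maximally accretive exactly when its adjoint is accretive. The two displayed conditions are then necessity and sufficiency for maximality of $\mathcal{V}$. For necessity: if $\mathfrak{N}\subsetneq\ker(A_+^*)\cap\mathcal{D}(D)^{[\perp]}$, adjoining $A_{+,F}^{-1}k_0$ for $k_0$ outside $\mathfrak{N}$ enlarges $\mathcal{V}$ at zero cost since such vectors have $\widetilde{k}=0$ and hence $q=0$; and any dissipative extension $D'\supsetneq D$ with the same orthogonal companion enlarges $\mathcal{V}$ while preserving $q\geq 0$. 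For sufficiency: if $\mathcal{V}\subsetneq\mathcal{V}'$ with $\mathcal{V}'$ still admissible, then $\mathfrak{N}\subset\mathfrak{N}'$ and $\mathcal{D}(D)\subset\mathcal{D}(D')$; maximality of $\mathfrak{N}$ pins down $\mathfrak{N}=\mathfrak{N}'=\ker(A_+^*)\cap\mathcal{D}(D')^{[\perp]}=\ker(A_+^*)\cap\mathcal{D}(D)^{[\perp]}$, and then correcting $D'$ by an $\mathfrak{N}$-valued map produces a genuine dissipative operator extension of $D$ with unchanged companion, contradicting maximality of $D$ unless $\mathcal{V}=\mathcal{V}'$.

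I expect the main obstacle to be exactly this maximality analysis in the infinite-dimensional case, which is the genuinely new part relative to \cite{FNW}. The coupling is delicate: enlarging $\mathcal{D}(D)$ shrinks the orthogonal companion $\mathcal{D}(D)^{[\perp]}$ and hence the room available for $\mathfrak{N}$, so one cannot treat $D$ and $\mathfrak{N}$ independently, and the argument must carefully verify that the two ways of enlarging $\mathcal{V}$ are the only ones. A secondary technical point, invisible in finite dimensions, is to justify that $A_{D,\mathfrak{N}}$ is closed and that maximality among proper extensions really coincides with maximal accretivity; this is where the adjoint-accretivity criterion and the dual-pair symmetry $A_{-,F}=A_{+,F}^*$ from Theorem \ref{prop:friedrichsfreude} do the work.
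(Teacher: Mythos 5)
Your proposal is correct and follows essentially the same route as the paper: the same reduction via Corollary \ref{coro:domain} and Proposition \ref{prop:accretivetelemann}, the same key identity $q(v)=\Real[\widetilde{k},D\widetilde{k}]+\Real[\widetilde{k},k]-\|V_K^{1/2}\widetilde{k}\|^2$ obtained from Theorem \ref{prop:festnumrange}, the same scaling of $k$ to force $[\widetilde{k},k]=0$, and the same use of Proposition \ref{thm:friendly} to embed a non-maximal $A_{D,\mathfrak{N}}$ into some $A_{D',\mathfrak{N}'}$ and compare components of the direct sum. The only cosmetic differences are that you phrase the necessity step with $tk$ and $itk$ where the paper uses a single complex scalar $\lambda$, and that you mention the adjoint-accretivity criterion, which the paper does not need.
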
 
\begin{proof} Corollary \ref{coro:domain} implies that
\begin{align*}
\mathcal{D}(A_-^*)=\mathcal{D}(\overline{A}_+)\dot{+} A_{+,F}^{-1}\ker (A_+^*)\dot{+}\ker(A_-^*)\:,
\end{align*}
which means that we can choose $\mathcal{D}(A_-^*)//\mathcal{D}(\overline{A}_+)=A_{+,F}^{-1}\ker (A_+^*)\dot{+}\ker(A_-^*)$. Now, all possible subspaces of $A_{+,F}^{-1}\ker (A_+^*)\dot{+}\ker(A_-^*)$ will be of the form $$\mathcal{V}_{D,\mathfrak{N}}:=\{A_{+,F}^{-1}D\widetilde{k}+\widetilde{k}: \widetilde{k}\in\mathcal{D}(D)\}\dot{+}\{A_{+,F}^{-1}k: k\in\mathfrak{N}\}\:,$$
where $D$ is a linear map from $\mathcal{D}(D)\subset\ker(A_-^*)$ to $\ker (A_+^*)$ and $\mathfrak{N}\subset\ker (A_+^*)$. We therefore use the pairs $(D,\mathfrak{N})$ to parametrize all proper extensions of the dual pair $(A_+,A_-)$ via $A_{+,\mathcal{V}_{D ,\mathfrak{N}}}=:A_{D,\mathfrak{N}}$. Thus, by Proposition \ref{prop:accretivetelemann}, $A_{D,\mathfrak{N}}$ is accretive if and only if \\(i): $$\mathcal{V}_{D,\mathfrak{N}}\subset\mathcal{D}(V_K^{1/2})$$ and\\ (ii):
\begin{align} \label{eq:susan}
q((A_{+,F}^{-1}D\widetilde{k}+\widetilde{k})+A_{+,F}^{-1}k)&:=\notag\\\Real\langle (A_{+,F}^{-1}D\widetilde{k}&+\widetilde{k})+A_{+,F}^{-1}k,A_-^*[(A_{+,F}^{-1}D\widetilde{k}+\widetilde{k})+A_{+,F}^{-1}k]\rangle\notag\\&\qquad\qquad\qquad-\|V_K^{1/2}(A_{+,F}^{-1}D\widetilde{k}+\widetilde{k}+A_{+,F}^{-1}k)\|^2\geq 0\:,
\end{align}
for all $\widetilde{k}\in\mathcal{D}(D)$ and $k\in\mathfrak{N}$. By Theorem \ref{prop:festnumrange} we have that $\mathcal{D}(A_{+,F})\subset\mathcal{D}(V_F^{1/2})\subset\mathcal{D}(V_K^{1/2})$, which means that Condition (i) is satisfied if and only if $\mathcal{D}(D)\subset\mathcal{D}(V_K^{1/2})$. Now, let us rewrite \eqref{eq:susan}:
\begin{align*}
&q((A_{+,F}^{-1}D\widetilde{k}+\widetilde{k})+A_{+,F}^{-1}k)\\
&=\Real\langle (A_{+,F}^{-1}D\widetilde{k}+\widetilde{k})+A_{+,F}^{-1}k,A_-^*[(A_{+,F}^{-1}D\widetilde{k}+\widetilde{k})+A_{+,F}^{-1}k]\rangle-\|V_K^{1/2}(A_{+,F}^{-1}D\widetilde{k}+\widetilde{k}+A_{+,F}^{-1}k)\|^2\\
&=\Real\langle A_{+,F}^{-1}D\widetilde{k}+A_{+,F}^{-1}k,A_{+,F}[A_{+,F}^{-1}D\widetilde{k}+A_{+,F}^{-1}k]\rangle+\Real\langle \widetilde{k},D\widetilde{k}+k\rangle\\&-\|V_K^{1/2}(A_{+,F}^{-1}D\widetilde{k}+A_{+,F}^{-1}k)\|^2-\|V_K^{1/2}\widetilde{k}\|^2-2\Real\langle V_K^{1/2}\widetilde{k},V_K^{1/2}(A_{+,F}^{-1}D\widetilde{k}+A_{+,F}^{-1}k)\rangle\\
&=\Real\left(\langle \widetilde{k},D\widetilde{k}\rangle-2\langle V_K^{1/2}\widetilde{k},V_K^{1/2}A_{+,F}^{-1}D\widetilde{k}\rangle\right)\\&+\Real\left(\langle\widetilde{k},k\rangle-2\langle V_K^{1/2}\widetilde{k},V_K^{1/2}A_{+,F}^{-1}k\rangle\right)-\|V_K^{1/2}\widetilde{k}\|^2\\
&=\Real[\widetilde{k},D\widetilde{k}]+\Real[\widetilde{k},k]-\|V_K^{1/2}\widetilde{k}\|^2\geq 0\:,
\end{align*}
where we have used that by Theorem \ref{prop:festnumrange},
\begin{equation}\label{eq:agar}
\Real\langle A_{+,F}^{-1}D\widetilde{k}+A_{+,F}^{-1}k,A_{+,F}[A_{+,F}^{-1}D\widetilde{k}+A_{+,F}^{-1}k]\rangle=\|V_K^{1/2}(A_{+,F}^{-1}D\widetilde{k}+A_{+,F}^{-1}k)\|^2\:.
\end{equation}
Now, assume 
\begin{equation} \label{eq:condition}
\mathfrak{N}\subset (\ker (A_+^*)\cap\mathcal{D}(D)^{[\perp]})\quad \text{and} \quad \Real[\widetilde{k},D\widetilde{k}]-\|V_K^{1/2}\widetilde{k}\|^2\geq 0 \quad\text{for all}\quad \widetilde{k}\in\mathcal{D}(D)\:.
\end{equation}
 Hence, we get that
\begin{equation*}
q((A_{+,F}^{-1}D\widetilde{k}+\widetilde{k})+A_{+,F}^{-1}k)=\Real[\widetilde{k},D\widetilde{k}]+\Real[\widetilde{k},k]-\|V_K^{1/2}\widetilde{k}\|^2=\Real[\widetilde{k},D\widetilde{k}]-\|V_K^{1/2}\widetilde{k}\|^2\geq 0\:,
\end{equation*}
for all $\widetilde{k}\in\mathcal{D}(D)$ and all $k\in\mathfrak{N}$. This means that Condition \eqref{eq:condition} being satisfied is sufficient for $A_{D,\mathfrak{N}}$ to be accretive. Let us now show that it is also necessary. Thus, assume that Condition \eqref{eq:condition} is not satisfied. If there exists a $\widetilde{k}\in\mathcal{D}(D)$ such that $\Real[\widetilde{k},D\widetilde{k}]-\|V_K^{1/2}\widetilde{k}\|^2<0$, this means that \eqref{eq:susan} cannot be satisfied in this case as we can choose $k=0$. Moreover, if there exists a $k\in\mathfrak{N}$ and a $\widetilde{k}\in\mathcal{D}(D)$ such that $[\widetilde{k},k]\neq 0$, this means that we can replace $k\mapsto \lambda k$, where $\lambda\in\C$ is suitably chosen such that
$$ q((A_{+,F}^{-1}D\widetilde{k}+\widetilde{k})+A_{+,F}^{-1}\lambda k)=\Real[\widetilde{k},D\widetilde{k}]+\Real[\widetilde{k},\lambda k]-\|V_K^{1/2}\widetilde{k}\|^2< 0\:,$$
which means that $A_{D,\mathfrak{N}}$ cannot be accretive in this case either.\\
Let us now prove that $A_{D,\mathfrak{N}}$ is maximally accretive if and only if $D$ is maximal in the sense as stated in the theorem and $\mathfrak{N}=\ker (A_+^*)\cap\mathcal{D}(D)^{[\perp]}$. Clearly, if there exists a $D\subset D'$ such that $\Real [\widetilde{k},D'\widetilde{k}]\geq \|V_K^{1/2}\widetilde{k}\|^2$ for all $\widetilde{k}\in\mathcal{D}(D')$ and $(\ker (A_+^*)\cap\mathcal{D}(D)^{[\perp]})=(\ker (A_+^*)\cap\mathcal{D}(D')^{[\perp]})$ or a $\mathfrak{N}\subset\mathfrak{N}'\subset (\ker (A_+^*)\cap\mathcal{D}(D)^{[\perp]})$, we get that $A_{D',\mathfrak{N}'}$ is an accretive extension of $A_{D,\mathfrak{N}}$. 

For the other direction, let us assume that $A_{D,\mathfrak{N}}$ is not maximally accretive. It is clear that the operator $A_{D, \widehat{\mathfrak{N}}}$, where $\widehat{\mathfrak{N}}=\ker A^*\cap\mathcal{D}(D)^{[\perp]}$, is an accretive extension of $A_{D,\mathfrak{N}}$ and from now on, we will therefore only consider this case. By Proposition \ref{thm:friendly}, we know that there exists a proper maximally accretive extension $\widehat{A}$ of the dual pair $(A_{D,\widehat{\mathfrak{N}}},A_-)$ and thus a subspace $\widehat{\mathcal{V}}\subset A_{+,F}^{-1}\ker (A_+^*)\dot{+}\ker (A_-^*)$ such that $\widehat{A}=A_{+,\widehat{\mathcal{V}}}$. Moreover, by what we have shown above, there exists an operator $D'$ that satisfies \eqref{eq:dissipativity} and a subspace $\mathfrak{N}'\subset(\ker (A_+^*)\cap\mathcal{D}(D')^{[\perp]})$ such that $A_{D,\widehat{\mathfrak{N}}}\subset\widehat{A}=A_{D',\mathfrak{N}'}$. Note that $A_{D,\widehat{\mathfrak{N}}}\subset A_{D',\mathfrak{N}'}$ is equivalent to $\mathcal{V}_{D,\widehat{\mathfrak{N}}}\subset\mathcal{V}_{D',\mathfrak{N}'}$, which means that
\begin{equation} \label{eq:codomains}
\{A_{+,F}^{-1}D\widetilde{k}+\widetilde{k}:\widetilde{k}\in\mathcal{D}(D)\}\dot{+}\{A_{+,F}^{-1}k:k\in\widehat{\mathfrak{N}}\}\subset\{A_{+,F}^{-1}D'\widetilde{k}'+\widetilde{k}':\widetilde{k}'\in\mathcal{D}(D')\}\dot{+}\{A_{+,F}^{-1}k':k'\in{\mathfrak{N}'}\}\:.
\end{equation}
 Let us now show that this implies that $\widehat{\mathfrak{N}}\subset\mathfrak{N}'$ as well as $\mathcal{D}(D)\subset\mathcal{D}(D')$. To begin with, assume that $\widehat{\mathfrak{N}}\not\subset\mathfrak{N}'$, i.e.\ that there exists a $k\in\widehat{\mathfrak{N}}$ such that $k\notin\mathfrak{N}'$. By \eqref{eq:codomains}, this means that there exists a $0\neq\widetilde{k}'\in\mathcal{D}(D')$ and a $k'\in\mathfrak{N}'$ such that 
 $$ A_{+,F}^{-1}k=A_{+,F}^{-1}D'\widetilde{k}'+\widetilde{k}'+A_{+,F}^{-1}k'\:,$$
which is equivalent to
 $$ A_{+,F}^{-1}(k-D'\widetilde{k}'-k')=\widetilde{k}'\:.$$
 However, note that the left hand side of this equation is an element of $A_{+,F}^{-1}\ker(A_+^*)$, while the right hand side is an element of $\ker(A_-^*)$. Since $A_{+,F}^{-1}\ker(A_+^*)\cap \ker(A_-^*)=\{0\}$, this implies that $\widetilde{k}'=0$ --- a contradiction. Hence $\widehat{\mathfrak{N}}\subset\mathfrak{N}'$. It can be argued analogously that $\mathcal{D}(D)\subset\mathcal{D}(D')$. Assume that this is not the case, i.e.\ that there exists a $\widetilde{k}\in\mathcal{D}(D)$ such that $\widetilde{k}\notin\mathcal{D}(D')$. By \eqref{eq:codomains}, it is true that there exists a $0\neq k'\in\mathfrak{N}'$ and a $\widetilde{k}'\in\mathcal{D}(D')$ such that 
 $$ A_{+,F}^{-1}D\widetilde{k}+\widetilde{k}=A_{+,F}^{-1}D'\widetilde{k}'+\widetilde{k}'+A_{+,F}^{-1}k'\:,$$
 being equivalent to
$$
A_{+,F}^{-1}(D\widetilde{k}-D'\widetilde{k}'-k')=\widetilde{k}'-\widetilde{k}\:, 
 $$
 from which again by a reasoning similar to above, we infer that $\widetilde{k}'-\widetilde{k}=0$. This however implies that $A_{+,F}^{-1}k'=0$ which implies that $k'=0$ since $A_{+,F}^{-1}$ is injective. But $k'=0$ is a contradiction from which we conclude that $\mathcal{D}(D)\subset\mathcal{D}(D')$.
 
We thus have shown that $\mathcal{D}(D)\subset \mathcal{D}(D')$ and $(\ker (A_+^*)\cap\mathcal{D}(D)^{[\perp]})=\widehat{\mathfrak{N}}\subset\mathfrak{N}'\subset (\ker (A_+^*)\cap\mathcal{D}(D')^{[\perp]})$. Now, $\mathcal{D}(D)\subset\mathcal{D}(D')$ implies that $\mathcal{D}(D)^{[\perp]}\supset\mathcal{D}(D')^{[\perp]}$, from which it immediately follows that $(\ker (A_+^*)\cap\mathcal{D}(D)^{[\perp]})\supset(\ker (A_+^*)\cap\mathcal{D}(D')^{[\perp]})$. This implies that $(\ker (A_+^*)\cap\mathcal{D}(D)^{[\perp]})=(\ker (A_+^*)\cap\mathcal{D}(D')^{[\perp]})$, which shows that $D$ was not maximal in the sense as stated in the theorem.
\end{proof}
Let us make a few remarks at this point:
\begin{remark} \normalfont \label{rem:haydn} Note that the correspondence between the pairs $(D,\mathfrak{N})$ and the proper maximally accretive extensions $A_{D,\mathfrak{N}}$ of $(A_+,A_-)$ is not one-to-one. This follows from the fact that for any $\widetilde{k}\in\mathcal{D}(D)$ and any $k\in(\ker (A_+^*)\cap\mathcal{D}(D)^{[\perp]})$, we can write
\begin{equation*}
\spann\{A_{+,F}^{-1}D\widetilde{k}+\widetilde{k}\}\dot{+}\spann\{A_{+,F}^{-1}k\}=\spann\{A_{+,F}^{-1}(D\widetilde{k}+ k)+\widetilde{k}\}\dot{+}\spann\{A_{+,F}^{-1}k\}\:,
\end{equation*}
i.e. if we have an auxiliary operator $D'$ with $\mathcal{D}(D')=\mathcal{D}(D)$ and $D'\widetilde{k} - D\widetilde{k}\in(\ker (A_+^*)\cap\mathcal{D}(D)^{[\perp]})$ for all $\widetilde{k}\in(\ker (A_+^*)\cap\mathcal{D}(D)^{[\perp]})$ we would get that $A_{D',\mathfrak{N}}=A_{D,\mathfrak{N}}$.
However, we could for example restrict our considerations to auxiliary operators that satisfy $D\widetilde{k}\perp k$ for all $\widetilde{k}\in\mathcal{D}(D)$ and all $k\in(\ker (A_+^*)\cap\mathcal{D}(D)^{[\perp]})$.
With this additional requirement, the correspondence between $(D,\mathfrak{N})$ and proper accretive extensions $A_{D,\mathfrak{N}}$ of $(A_+,A_-)$ becomes one-to-one.
\end{remark}
\begin{remark} \normalfont For the case that $\mathcal{D}(D)$ is finite-dimensional the maximality condition on $D$ is automatically satisfied. In this case, $A_{D,\mathfrak{N}}$ is therefore maximally accretive if and only if $\mathfrak{N}=(\ker (A_+^*)\cap\mathcal{D}(D)^{[\perp]})$.
\end{remark}
\begin{remark} \normalfont Observe that this theorem reduces to the result of Proposition \ref{prop:kreinistfein} in the case of (maximally) accretive extensions of a dual pair of strictly positive symmetric operators $A_\pm=V$, since 
\begin{equation*}
\Real[\widetilde{k},D\widetilde{k}]=\Real \langle\widetilde{k},D\widetilde{k}\rangle-2\Real\langle \underbrace{V_K^{1/2}\widetilde{k}}_{=0},V_K^{1/2}V_F^{-1}\widetilde{k}\rangle=\Real \langle\widetilde{k},D\widetilde{k}\rangle\:,
\end{equation*}
where $V_K^{1/2}\widetilde{k}=0$ follows from the fact that $\widetilde{k}\in \ker(A_\pm^*)=\ker(V^*)=\ker(V_K)=\ker(V_K^{1/2})$.
This is of course the same as requiring that $D$ be an accretive operator from $\mathcal{D}(D)\subset\ker(V^*)$ into $\ker(V^*)$. The condition that $V_D$ is maximally accretive if and only if $D$ is a maximally accretive operator in $\overline{\mathcal{D}(D)}$ follows also from this theorem since $(\ker V^*\cap\mathcal{D}(D)^\perp)=(\ker V^*\cap\mathcal{D}(D')^\perp)$ is equivalent to $\overline{\mathcal{D}(D)}=\overline{\mathcal{D}(D')}$.
\end{remark}
\begin{convention} If $A_{D,\mathfrak{N}}$ is a maximally accretive extension of the dual pair $(A_+,A_-)$ as defined in Theorem \ref{thm:dido}, we know that $\mathfrak{N}$ is determined by the choice of $D$: $\mathfrak{N}=\ker (A_+^*)\cap\mathcal{D}(D)^{[\perp]}$. Thus, if $A_{D,\mathfrak{N}}$ is maximally accretive, let us just write $A_D$ instead of $A_{D,\mathfrak{N}}$.
\end{convention}
\begin{example} \normalfont \label{ex:mabaker} Let $\gamma>0$ and $\mathcal{H}=L^2(0,1)$ and consider the dual pair $A_\pm=\pm i\frac{\gamma}{x^2}-\frac{\text{d}^2}{\text{d}x^2}$ with domain $\mathcal{C}_c^\infty(0,1)$. Then the real part of $A_\pm$ is given by $V=-\frac{\text{d}^2}{\text{d}x^2}$ with domain $\mathcal{C}_c^\infty(0,1)$. It can be shown that the domain of $V_K^{1/2}$ is given by $H^1(0,1)$ and since $\ker V^*=\spann\{1,x\}$, we get for any $f\in H^1(0,1)$:
\begin{equation*}
\|V_K^{1/2}f\|^2=\|V_F^{1/2}(f(x)-(1-x)f(0)-xf(1))\|^2=\|f'\|^2-|f(1)-f(0)|^2\:.
\end{equation*}

Define the numbers $\omega_\pm:=\frac{1\pm\sqrt{1+4i\gamma}}{2}$. We now have to distinguish between the two cases $\gamma\geq\sqrt{3}$ and $0<\gamma<\sqrt{3}$ as the dimension of $\ker(A_\pm^*)$ is different in either case:
\begin{itemize}
\item For $\gamma \geq\sqrt{3}$: $\ker A_-^*=\spann\{x^{\omega_+}\}$  and  $\ker A_+^*=\spann\{x^{\overline{\omega_+}}\}$. Moreover, $x^{\omega_+}\in H^1(0,1)=\mathcal{D}(V_K^{1/2})$ from which we get that $(\ker A_-^*\cap\mathcal{D}(V_K^{1/2}))=\spann\{x^{\omega_+}\}$.
There are two possibilities for the choice of $\mathcal{D}(D)$. We may either choose $\mathcal{D}(D)=\{0\}$ which parametrizes the Friedrichs extension $A_{+,F}$ of $A_+$. The second possibility is the choice $\mathcal{D}(D)=\spann\{x^{\omega_+}\}$. Observe that any map from $\spann\{x^{\omega_+}\}$ into $\spann\{x^{\overline{\omega_+}}\}$ has to be of the form
\begin{equation} \label{eq:eriugena}
Dx^{\omega_+}=dx^{\overline{\omega_+}}\:,
\end{equation}
where $d\in\C$.
A calculation shows that
\begin{align}[x^{\omega_+},Dx^{\omega_+}]&=d[x^{\omega_+},x^{\overline{\omega_+}}]\notag\\&=d\left(\frac{1}{2\overline{\omega_+}+1}-\frac{2}{i\gamma-(\overline{\omega_+}+2)(\overline{\omega_+}+1)}\left[\frac{\overline{\omega_+}(\overline{\omega_+}+2)}{2\overline{\omega_+}+1}-\frac{|{\omega_+}|^2}{\omega_++\overline{\omega_+}-1}\right]\right)=d\cdot\sigma(\omega_+)\:, \label{eq:scotus}
\end{align}
where we have defined 

\begin{equation} \sigma(\omega_+):=\frac{1}{2\overline{\omega_+}+1}-\frac{2}{i\gamma-(\overline{\omega_+}+2)(\overline{\omega_+}+1)}\left[\frac{\overline{\omega_+}(\overline{\omega_+}+2)}{2\overline{\omega_+}+1}-\frac{|{\omega_+}|^2}{\omega_++\overline{\omega_+}-1}\right]\:.
\label{eq:thomismus}
\end{equation}
From another calculation, we get
\begin{equation} \label{eq:bonaventura}
\|V_K^{1/2}x^{\omega_+}\|^2=\|(x^{\omega_+})'\|^2-1=\frac{|\omega_+-1|^2}{\omega_++\overline{\omega_+}-1}=:\tau(\omega_+)\:.
\end{equation}
Thus, all proper maximally accretive extensions of $(A_+,A_-)$ that are different from $A_{+,F}$ are given by
\begin{align*}
A_D:\qquad\mathcal{D}(A_D)&=\mathcal{D}(\overline{A_+})\dot{+}\spann\{A_{+,F}^{-1}Dx^{\omega_+}+x^{\omega_+}\}=\mathcal{D}(\overline{A_+})\dot{+}\spann\{A_{+,F}^{-1}dx^{\overline{\omega_+}}+x^{\omega_+}\}\\
A_D&=A_-^*\upharpoonright_{\mathcal{D}(A_D)}\:,
\end{align*}
where $d\in\C$ has to lie in the half-plane of the complex plane given by
\begin{align} \label{eq:bkvcondition}
\Real [x^{\omega_+},dx^{\overline{\omega_+}}]&\geq\|V_K^{1/2}x^{\omega_+}\|^2\notag\\
\Leftrightarrow\qquad\Real(d\sigma(\omega_+))&\geq \tau(\omega_+)\:.
\end{align}

\item For $0<\gamma<\sqrt{3}$: $\ker (A_-^*)=\spann\{x^{\omega_+},x^{\omega_-}\}$  and  $\ker(A_+^*)=\spann\{x^{\overline{\omega_+}},x^{\overline{\omega_-}}\}$. However, we have $x^{\omega_-}\notin H^1(0,1)=\mathcal{D}(V_K^{1/2})$, since $\Real (\omega_-)<1/2$. This means that $(\ker A_-^*\cap\mathcal{D}(V_K^{1/2}))=\spann\{x^{\omega_+}\}$. Hence, the only two choices for $\mathcal{D}(D)$ are again either $\mathcal{D}(D)=\{0\}$ which corresponds to the Friedrichs extension $A_{+,F}$ of $A_+$ or $\mathcal{D}(D)=\spann\{x^{\omega_+}\}$. As we have already described $A_{+,F}$ in \eqref{eq:domainsmallg}, let us now focus on the case $\mathcal{D}(D)=\spann\{x^{\omega_+}\}$. 

To this end, let us determine $(\mathcal{D}(D)^{[\perp]}\cap\ker (A_+^*))=(\spann\{x^{\omega_+}\}^{[\perp]}\cap\spann\{x^{\overline{\omega_+}},x^{\overline{\omega_-}}\})$, which means that we have to find the solution space of
$$ \langle x^{\omega_+},\lambda x^{\overline{\omega_+}}+\mu x^{\overline{\omega_-}}\rangle-2\langle V_K^{1/2}x^{\omega_+},V_K^{1/2}A_{+,F}^{-1}(\lambda x^{\overline{\omega_+}}+\mu x^{\overline{\omega_-}})\rangle=0\:,$$
which is given by $\spann\{A_{+,F}\chi\}$, where
 $$\chi(x):=(2+\overline{\omega_-}-\omega_+)(x^{\overline{\omega_+}+2}-x^{\omega_+})+(\omega_+-\overline{\omega_+}-2)(x^{\overline{\omega_-}+2}-x^{\omega_+})$$
and thus 
$$ \left(A_{+,F}\chi\right)(x)=\frac{2+\overline{\omega_-}-\omega_+}{ i\gamma-(\overline{\omega_+}+2)(\overline{\omega_+}+1)}x^{\overline{\omega_+}}+\frac{\omega_+-\overline{\omega_+}-2}{i\gamma-(\overline{\omega_-}+2)(\overline{\omega_-}+1)}x^{\overline{\omega_-}}\:.$$
Next, let us argue that it is also sufficient to only consider maps $D$ of the form \eqref{eq:eriugena}. This follows from what has been said in Remark \ref{rem:haydn}. To see this, assume that the map $D$ is of the form 
$$Dx^{\omega_+}=d_+x^{\overline{\omega_+}}+d_-x^{\overline{\omega_-}}$$
for some numbers $d_+,d_-\in\C$.
Then, since $(A_{+,F}^{-1}x^{\overline{\omega_\pm}})\propto (x^{\overline{\omega_\pm}+2}-x^{\omega_+})$ we can find numbers $\lambda, \mu\in\C$ such that $$(A_{+,F}^{-1}Dx^{\omega_+}+\lambda\chi(x))=\mu(x^{\overline{\omega_+}+2}-x^{\omega_+})\propto A_{+,F}^{-1}x^{\overline{\omega_+}}\:,$$
which means that there exists another number $\nu\in\C$ such that
\begin{equation} \label{eq:bonteededieu}
(A_{+,F}^{-1}Dx^{\omega_+}+\lambda\chi(x))=A_{+,F}^{-1}(Dx^{\omega_+}+\lambda A_{+,F}\chi(x))=\nu A_{+,F}^{-1}x^{\overline{\omega_+}}\:. 
\end{equation}
 Thus, the operator $D'$ given by
$$ D'x^{\omega_+}= Dx^{\omega_+}+\lambda A_{+,F}\chi(x)$$
maps $\spann\{x^{\omega_+}\}$ into $\spann\{x^{\overline{\omega_+}}\}$, which follows from \eqref{eq:bonteededieu} and the fact that $A_{+,F}^{-1}$ is injective. Moreover, since $A_{+,F}\chi\in(\ker A_+^*\cap\mathcal{D}(D)^{[\perp]})$, we have that $A_D=A_{D'}$. Hence, for any $0<\gamma<\sqrt{3}$, it also suffices to consider only auxiliary operators of the form \eqref{eq:eriugena}. Moreover, since the calculations in \eqref{eq:scotus} and \eqref{eq:thomismus} do not change for $0<\gamma<\sqrt{3}$, we get the following description of the proper accretive extensions of $(A_+,A_-)$ that are different to $A_{+,F}$:
\begin{align*}
A_D:\qquad\mathcal{D}(A_D)&=\mathcal{D}(\overline{A_+})\dot{+}\spann\{A_{+,F}^{-1}Dx^{\omega_+}+x^{\omega_+}\}\dot{+}\spann\{\chi\}\\&=\mathcal{D}(\overline{A_+})\dot{+}\spann\{A_{+,F}^{-1}dx^{\overline{\omega_+}}+x^{\omega_+}\}\dot{+}\spann\{\chi\}\\
A_D&=A_-^*\upharpoonright_{\mathcal{D}(A_D)}\:,
\end{align*}
where $d\in\C$ has to again satisfy Condition \eqref{eq:bkvcondition}.
\end{itemize}

\end{example}
\section{Closability of the quadratic form induced by the real part}
\label{sec:form} 
Next, we want to investigate what can be said about the quadratic form associated to the real part of a maximally accretive extension of a dual pair $(A_+,A_-)$ satisfying the assumptions of Theorem \ref{thm:dido}. We start by defining this quadratic form.
\begin{definition} Let $(A_+,A_-)$ be a dual pair satisfying the assumptions of Theorem \ref{thm:dido}. For any proper maximally accretive extension $A_D$ of $(A_+,A_-)$ let us define the associated non-negative quadratic form $\mathfrak{re}_{D,0}$:
\begin{align*}
\mathfrak{re}_{D,0}:\qquad\mathcal{D}(\mathfrak{re}_{D,0})&=\mathcal{D}(A_D)\\
\mathfrak{re}_{D,0}(\psi)&=\Real\langle \psi,A_D\psi\rangle\:.
\end{align*}
Moreover, if $\mathfrak{re}_{D,0}$ is closable let us denote its closure by
$\mathfrak{re}_D$. Recall that $\mathfrak{re}_D$ is given by:
\begin{align*}
&\mathfrak{re}_D:\\&\mathcal{D}(\mathfrak{re}_D)=\{f\in\mathcal{H}: \exists \{f_n\}_{n=1}^\infty\subset\mathcal{D}(\mathfrak{re}_{D,0})\:\text{s.t.}\: \|f_n-f\|\overset{n\rightarrow\infty}{\longrightarrow}0\:\text{and}\:\|f_n-f_m\|_{\mathfrak{re}_D}\overset{n,m\rightarrow\infty}{\longrightarrow}0\}\\
&\mathfrak{re}_{D}(f):=\lim_{n\rightarrow\infty}\mathfrak{re}_{D,0}(f_n)\:,
\end{align*}
where $\|\cdot\|_{\mathfrak{re}_D}$ denotes the norm induced by $\mathfrak{re}_{D,0}$: $$\|f\|_{\mathfrak{re}_D}^2:=\|f\|^2+\mathfrak{re}_{D,0}(f)=\|f\|^2+\Real\langle f,A_Df\rangle\quad\text{for all}\:\: f\in\mathcal{D}(A_D)\:.$$
 Moreover, let us denote the non-negative selfadjoint operator associated to $\mathfrak{re}_D$ by $V_D$.
\end{definition}
By \cite[Thm.\ VI, 1.27]{Kato} each non-negative selfadjoint operator $\widehat{V}$ induces a closable quadratic form.  However, it is not always the case that the form $\mathfrak{re}_{D,0}$ is closable. Let us now give a necessary and sufficient condition for $\mathfrak{re}_{D,0}$ to be closable. For technical reasons that will become evident during the proof, we have to restrict ourselves to the case $\dim(D)<\infty$.
\begin{theorem} \label{thm:closable}
Let $A_D$ be defined as in Theorem \ref{thm:dido} and assume that $V\geq\varepsilon>0$ as well as $\dim\mathcal{D}(D)<\infty$. Then, $\mathfrak{re}_{D,0}$ is closable if and only if
\begin{equation} \label{eq:zeitgenug}
q(A_{+,F}^{-1}D\widetilde{k}+\widetilde{k})=\Real [\widetilde{k},D\widetilde{k}]-\|V_K^{1/2}\widetilde{k}\|^2=0\quad\text{for all}\quad \widetilde{k}\in\mathcal{D}(D)\cap\mathcal{D}(V_F^{1/2})\:.
\end{equation}
\end{theorem}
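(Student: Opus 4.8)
The plan is to split $\mathfrak{re}_{D,0}$ into a manifestly closable part and a finite-rank part, and then decide closability by analysing how the latter sits over the closure of the former. Write $\mathcal{D}_0:=\mathcal{D}(\overline{A_+})\dot{+}\{A_{+,F}^{-1}k:k\in\mathfrak{N}\}$ and $W:=\{A_{+,F}^{-1}D\widetilde{k}+\widetilde{k}:\widetilde{k}\in\mathcal{D}(D)\}$, so that $\mathcal{D}(A_D)=\mathcal{D}_0\dot{+}W$ with $\dim W=\dim\mathcal{D}(D)<\infty$, and let $\pi_W$ be the projection onto $W$ along $\mathcal{D}_0$. The first step is the identity
\[
\mathfrak{re}_{D,0}(\psi)=\|V_K^{1/2}\psi\|^2+Q(\pi_W\psi),\qquad Q(\widetilde{k}):=q(A_{+,F}^{-1}D\widetilde{k}+\widetilde{k})=\Real[\widetilde{k},D\widetilde{k}]-\|V_K^{1/2}\widetilde{k}\|^2 .
\]
This follows from Lemma \ref{lemma:cernohorsky}: for $\psi=f+v$ with $f\in\mathcal{D}(\overline{A_+})$ and $v=(A_{+,F}^{-1}D\widetilde{k}+\widetilde{k})+A_{+,F}^{-1}k\in\mathcal{V}_{D,\mathfrak{N}}$ one has $\Real\langle\psi,A_D\psi\rangle=\|V_K^{1/2}\psi\|^2+q(v)$, and the computation in the proof of Theorem \ref{thm:dido} gives $q(v)=\Real[\widetilde{k},D\widetilde{k}]+\Real[\widetilde{k},k]-\|V_K^{1/2}\widetilde{k}\|^2$, in which $[\widetilde{k},k]=0$ because $k\in\mathfrak{N}\subset\mathcal{D}(D)^{[\perp]}$. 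By the accretivity condition \eqref{eq:dissipativity} the form $Q$ is non-negative. Abbreviate $\mathfrak{a}(\psi):=\|V_K^{1/2}\psi\|^2$ with associated norm $\|\psi\|_{\mathfrak{a}}^2:=\|\psi\|^2+\mathfrak{a}(\psi)$.

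Next I would reduce closability to a statement about the finite-rank term alone. Since $\mathfrak{a}$ is the restriction to $\mathcal{D}(A_D)\subset\mathcal{D}(V_K^{1/2})$ of the closed form of the self-adjoint operator $V_K^{1/2}$, it is closable. Hence, given an $\mathfrak{re}_{D,0}$-Cauchy sequence $\{\psi_n\}\subset\mathcal{D}(A_D)$ with $\psi_n\to 0$ in $\mathcal{H}$, splitting $\mathfrak{re}_{D,0}(\psi_n-\psi_m)=\mathfrak{a}(\psi_n-\psi_m)+Q(\pi_W(\psi_n-\psi_m))$ into its two non-negative summands forces both to tend to $0$; closability of $\mathfrak{a}$ then yields $\mathfrak{a}(\psi_n)\to 0$, so $\mathfrak{re}_{D,0}(\psi_n)\to\lim_n Q(\pi_W\psi_n)$. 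Thus $\mathfrak{re}_{D,0}$ is closable if and only if every such sequence satisfies $Q(\pi_W\psi_n)\to 0$.

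The geometric heart of the proof is the identification $\overline{\mathcal{D}_0}^{\,\|\cdot\|_{\mathfrak{a}}}=\mathcal{D}(V_F^{1/2})$. To get this I would use that $\mathcal{D}_0\subset\mathcal{D}(A_{+,F})\subset\mathcal{D}(V_F^{1/2})$ (Theorem \ref{prop:festnumrange}), that $\mathcal{D}(\overline{A_+})\supset\mathcal{D}(A_+)=\mathcal{D}(V)$ is form-dense in $\mathcal{D}(V_F^{1/2})$, and that by \eqref{eq:alonsosimon} one has $\|V_K^{1/2}\psi\|=\|V_F^{1/2}\psi\|$ on $\mathcal{D}(V_F^{1/2})$, so the $\mathfrak{a}$-norm and the $V_F^{1/2}$-form norm agree there and $\mathcal{D}(V_F^{1/2})$ is complete. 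Since $A_{+,F}^{-1}D\widetilde{k}\in\mathcal{D}(V_F^{1/2})$, an element $A_{+,F}^{-1}D\widetilde{k}+\widetilde{k}$ of $W$ lies in $\mathcal{D}(V_F^{1/2})$ exactly when $\widetilde{k}\in\mathcal{D}(V_F^{1/2})$; together with $Q(\pi_W\psi)=Q(\widetilde{k})$ this shows that the condition ``$Q=0$ on $W\cap\overline{\mathcal{D}_0}^{\,\|\cdot\|_{\mathfrak{a}}}$'' is literally \eqref{eq:zeitgenug}. The ``only if'' direction is then immediate: if \eqref{eq:zeitgenug} fails, choose $w_*=A_{+,F}^{-1}D\widetilde{k}_*+\widetilde{k}_*\in W\cap\mathcal{D}(V_F^{1/2})$ with $Q(\widetilde{k}_*)>0$, approximate it by $\phi_n\in\mathcal{D}_0$ in the $\mathfrak{a}$-norm, and set $\psi_n:=w_*-\phi_n\in\mathcal{D}(A_D)$; then $\psi_n\to 0$, $\pi_W\psi_n=w_*$, $\mathfrak{a}(\psi_n)\to 0$, so $\{\psi_n\}$ is $\mathfrak{re}_{D,0}$-Cauchy with $\mathfrak{re}_{D,0}(\psi_n)\to Q(\widetilde{k}_*)>0$, contradicting closability.

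For the ``if'' direction, assume \eqref{eq:zeitgenug} and take any $\mathfrak{re}_{D,0}$-Cauchy $\psi_n\to 0$. Here the main obstacle is that the finite-dimensional components $w_n:=\pi_W\psi_n$ need not be bounded, so one cannot simply pass to a convergent subsequence; this is precisely where $\dim\mathcal{D}(D)<\infty$ enters. Using $Q(\lambda\widetilde{k})=|\lambda|^2Q(\widetilde{k})$, the form $Q$ is the diagonal of a non-negative Hermitian form, so its radical $\mathcal{K}_W:=\{w\in W:Q(w)=0\}$ is a complex subspace; fix a complex complement $\mathcal{C}$ on which $Q$ is positive definite and write $w_n=\kappa_n+u_n$. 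The Cauchy condition $Q(w_n-w_m)=Q(u_n-u_m)\to 0$ then forces $u_n\to w_*$ in the finite-dimensional space $\mathcal{C}$, whence $Q(w_n)=Q(u_n)\to Q(w_*)$, and it remains to show $Q(w_*)=0$. From $\psi_n\to 0$ and $\mathfrak{a}(\psi_n)\to 0$ (and continuity of $V_K^{1/2}$ on the finite-dimensional $W$) one gets $\phi_n+\kappa_n=\psi_n-u_n\to -w_*$ in the $\mathfrak{a}$-norm, so $-w_*\in\overline{\mathcal{D}_0+\mathcal{K}_W}^{\,\|\cdot\|_{\mathfrak{a}}}=\overline{\mathcal{D}_0}^{\,\|\cdot\|_{\mathfrak{a}}}+\mathcal{K}_W=\mathcal{D}(V_F^{1/2})+\mathcal{K}_W$, where the first equality uses that the sum of a closed subspace and the finite-dimensional $\mathcal{K}_W$ is closed. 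Writing $w_*=-(a+\kappa)$ with $a\in\mathcal{D}(V_F^{1/2})$ and $\kappa\in\mathcal{K}_W$ gives $w_*+\kappa\in W\cap\mathcal{D}(V_F^{1/2})$, so \eqref{eq:zeitgenug} yields $0=Q(w_*+\kappa)=Q(w_*)$ since $\kappa$ lies in the radical. Hence $Q(\pi_W\psi_n)\to 0$ and $\mathfrak{re}_{D,0}$ is closable, completing the equivalence.
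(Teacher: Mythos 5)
Your proposal is correct, and while the necessity direction and the overall strategy (splitting $\mathfrak{re}_{D,0}$ via Lemma \ref{lemma:cernohorsky} into the $V_K$-form plus the finite-dimensional form $q$ on the defect space, then exploiting $\dim\mathcal{D}(D)<\infty$) coincide with the paper's, your sufficiency argument is organized around a genuinely different decomposition. The paper splits $\mathcal{D}(D)$ along $\mathcal{D}_2=\mathcal{D}(D)\cap\mathcal{D}(V_F^{1/2})$, introduces the projection $\mathcal{P}$ onto $\ker V^*$ along $\mathcal{D}(V_F^{1/2})$, and shows that the component $\widetilde{k}_n^{(1)}$ in the complement of $\mathcal{D}_2$ actually tends to zero in norm (via injectivity of $\mathcal{P}$ on that finite-dimensional complement), after first proving the cancellation identity \eqref{eq:schee}. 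You instead split $W$ along the radical $\mathcal{K}_W$ of $q$, let the non-radical component converge to a possibly nonzero limit $w_*$, and then use the closedness of $\mathcal{D}(V_F^{1/2})+\mathcal{K}_W$ (the same sum-with-finite-dimensional-subspace fact the paper invokes from \cite{Halmos} only later, in Lemma \ref{lemma:sts}) to place $w_*$ modulo $\mathcal{K}_W$ inside $W\cap\mathcal{D}(V_F^{1/2})$, where hypothesis \eqref{eq:zeitgenug} kills $q(w_*)$; the Cauchy--Schwarz argument for the radical plays the role of \eqref{eq:schee}. Your preliminary reduction --- peeling off the closable form $\psi\mapsto\|V_K^{1/2}\psi\|^2$ so that closability becomes exactly the statement ``$q(\pi_W\psi_n)\to 0$ for every null sequence'' --- is more modular than the paper's direct computation and makes transparent where each hypothesis enters; the paper's route, in exchange, yields the quantitative information that the offending component itself vanishes in the limit, and produces the identity \eqref{eq:schee} that is reused later in Theorem \ref{thm:townsende}. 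All the auxiliary facts you rely on ($\overline{\mathcal{D}_0}^{\|\cdot\|_{\mathfrak{a}}}=\mathcal{D}(V_F^{1/2})$ via Theorem \ref{prop:festnumrange} and \eqref{eq:alonsosimon}, boundedness of $V_K^{1/2}$ and of $q$ on the finite-dimensional $W$, and the vanishing of $[\widetilde{k},k]$ for $k\in\mathfrak{N}$) are correctly justified.
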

\begin{proof} Firstly, let us show that \eqref{eq:zeitgenug} is necessary for $\mathfrak{re}_{D,0}$ to be closable. Thus, assume that there exists a $\widetilde{k}\in\mathcal{D}(D)\cap\mathcal{D}(V_F^{1/2})$ such that $q(A_{+,F}^{-1}D\widetilde{k}+\widetilde{k})\neq 0$. Since by Theorem \ref{prop:festnumrange}, we have that $A_{+,F}^{-1}D\widetilde{k}+\widetilde{k}\in\mathcal{D}(V_F^{1/2})$, there exists a sequence $\{f_n\}_{n=1}^\infty\subset\mathcal{D}(V)$ that is Cauchy with respect to $\|\cdot\|^2+\langle\cdot,V\cdot\rangle$ such that
\begin{equation*}
\|f_n+A^{-1}_{+,F}D\widetilde{k}+\widetilde{k}\|^2+\|V_F^{1/2}(f_n+A^{-1}_{+,F}D\widetilde{k}+\widetilde{k})\|^2\overset{n\rightarrow\infty}{\longrightarrow} 0\:.
\end{equation*}
This means in particular that the sequence $g_n:=f_n+A_{+,F}^{-1}D\widetilde{k}+\widetilde{k}$ converges to $0$:
\begin{equation*}
\lim_{n\rightarrow\infty}\|g_n\|=0\:.
\end{equation*}
Also, since $\{f_n\}_{n=1}^\infty\subset\mathcal{D}(V)\subset\mathcal{D}(V_F^{1/2})$, we can show that $\{g_n\}_{n=1}^\infty$ is Cauchy with respect to $\|\cdot\|_{\mathfrak{re}_D}$:
\begin{equation*}
\|g_n-g_m\|_{\mathfrak{re}_D}^2=\|f_n-f_m\|_{\mathfrak{re}_D}^2=\|f_n-f_m\|^2+\|V_F^{1/2} (f_n-f_m)\|^2\overset{n,m\rightarrow\infty}{\longrightarrow} 0\:.
\end{equation*}
However, by Lemma \ref{lemma:cernohorsky} we have that
\begin{align*}
\Real\langle g_n,A_Dg_n\rangle&=\Real\langle f_n+A_{+,F}^{-1}D\widetilde{k}+\widetilde{k},A_D(f_n+A_{+,F}^{-1}D\widetilde{k}+\widetilde{k})\rangle\\&=\|V_K^{1/2}(f_n+A_{+,F}^{-1}D\widetilde{k}+\widetilde{k})\|^2+q(A_{+,F}^{-1}D\widetilde{k}+\widetilde{k})\:.
\end{align*} 
Moreover, since $\|V_K^{1/2} (f_n+A_{+,F}^{-1}D\widetilde{k}+\widetilde{k})\|=\|V_F^{1/2} (f_n+A_{+,F}^{-1}D\widetilde{k}+\widetilde{k})\|$, we get
\begin{align*}
\|g_n\|^2_{\mathfrak{re}_D}=\|f_n+A_{+,F}^{-1}D\widetilde{k}+\widetilde{k}\|^2&+\|V_F^{1/2} (f_n+A_{+,F}^{-1}D\widetilde{k}+\widetilde{k})\|^2\\&+q(A_{+,F}^{-1}D\widetilde{k}+\widetilde{k})\overset{n\rightarrow\infty}{\longrightarrow} q(A_{+,F}^{-1}D\widetilde{k}+\widetilde{k}) \neq 0\:,
\end{align*}
which shows that $\mathfrak{re}_{D,0}$ is not closable.\\
Now, let us show that \eqref{eq:zeitgenug} being satisfied implies that $\mathfrak{re}_{D,0}$ is closable. To this end, let us firstly show that $A_D$ being accretive and \eqref{eq:zeitgenug} imply that
\begin{equation} \label{eq:schee}
q(A_{+,F}^{-1}D(\widetilde{k}_1+\widetilde{k}_2)+ \widetilde{k}_1+\widetilde{k}_2)=q(A_{+,F}^{-1}D\widetilde{k}_1+\widetilde{k}_1)
\end{equation}
for any $\widetilde{k}_1\in\mathcal{D}(D)$ and any $\widetilde{k}_2\in\mathcal{D}(D)\cap\mathcal{D}(V_F^{1/2})$. Since $q(A_{+,F}^{-1}D\widetilde{k}_2+\widetilde{k}_2)=0$ by assumption, for any $\lambda\in\C$, we get
\begin{equation} \label{eq:salzburg}
q(A_{+,F}^{-1}D\widetilde{k}_1+\widetilde{k}_1+\lambda(A_{+,F}^{-1}D\widetilde{k}_2+\widetilde{k}_2))=q(A_{+,F}^{-1}D\widetilde{k}_1+\widetilde{k}_1)+2\Real[\lambda q(A_{+,F}^{-1}D\widetilde{k}_1+\widetilde{k}_1,A_{+,F}^{-1}D\widetilde{k}_2+\widetilde{k}_2)]\:,
\end{equation}
where $q(\cdot,\cdot)$ denotes the sesquilinear form associated to $q$. This implies that $q(A_{+,F}^{-1}D\widetilde{k}_1+\widetilde{k}_1,A_{+,F}^{-1}D\widetilde{k}_2+\widetilde{k}_2)=0$, since otherwise, we could choose a suitable $\lambda\in\C$ such that the right hand side of \eqref{eq:salzburg} is negative. This, however, would contradict the accretivity of $A_D$, from which we have $q(A_{+,F}^{-1}D\widetilde{k}_1+\widetilde{k}_1+\lambda(A_{+,F}^{-1}D\widetilde{k}_2+\widetilde{k}_2))\geq 0$.
Next, let us define the operator $\mathcal{P}$ to be the unbounded projection onto $\ker V^*$ along $\mathcal{D}(V_F^{1/2})$ according to the decomposition $\mathcal{D}(V_K^{1/2})=\mathcal{D}(V_F^{1/2})\dot{+}\ker V^*$:
\begin{align} \label{eq:audietis}
\mathcal{P}:\qquad\mathcal{D}(\mathcal{P})=\mathcal{D}(V_K^{1/2})&=\mathcal{D}(V_F^{1/2})\dot{+}\ker V^*\notag\\
\mathcal{P}(v_F+v^*)&=v^*\:,
\end{align}
where $v_F\in\mathcal{D}(V_F^{1/2})$ and $v^*\in\ker V^*$. Moreover, let us define $\mathcal{D}_2:=\mathcal{D}(D)\cap\mathcal{D}(V_F^{1/2})$ and decompose 
\begin{equation} \label{eq:decomposition}
\mathcal{D}(D)=\mathcal{D}_2 \dot{+}\mathcal{D}(D)//\mathcal{D}_2\:.
\end{equation}
Now, let $\{f_n\}_{n=1}^\infty \subset\mathcal{D}(A_D)$ be a sequence that converges to $0$ and that is Cauchy with respect to $\|\cdot\|_{\mathfrak{re}_D}$. In general form, it can be written as
\begin{equation*}
f_n:=f_{0,n}+A_{+,F}^{-1}k_n+A_{+,F}^{-1}D\widetilde{k}_n^{(1)}+\widetilde{k}_n^{(1)}+A_{+,F}^{-1}D\widetilde{k}_n^{(2)}+\widetilde{k}_n^{(2)}\:,
\end{equation*}
where $\{f_{0,n}\}_{n=1}^\infty\subset\mathcal{D}(\overline{A}_+)$, $\{k_n\}_{n=1}^\infty\subset (\ker A_+^*\cap\mathcal{D}(D)^{[\perp]})$, $\left\{\widetilde{k}^{(1)}_n\right\}_{n=1}^\infty\subset(\mathcal{D}(D)//\mathcal{D}_2)$ and $\left\{\widetilde{k}^{(2)}_n\right\}_{n=1}^\infty\subset\mathcal{D}_2$.
At this point it becomes clear that it does not matter which specific decomposition we have chosen in \eqref{eq:decomposition} since any component $(\idty-\mathcal{P})\widetilde{k}^{(1)}$ could be absorbed into $\widetilde{k}^{(2)}$.
For convenience, let us define 
$$v_{F,n}:=(\idty-\mathcal{P})f_n=f_{0,n}+A_{+,F}^{-1}k_n+A_{+,F}^{-1}D\widetilde{k}_n^{(1)}+(\idty-\mathcal{P})\widetilde{k}_n^{(1)}+A_{+,F}^{-1}D\widetilde{k}_n^{(2)}+\widetilde{k}_n^{(2)}\:,$$
from which we get $f_n=v_{F,n}+\mathcal{P}\widetilde{k}_n^{(1)}$, where $\{v_{F,n}\}_{n=1}^\infty\subset\mathcal{D}(V_F^{1/2})$ and $\{\mathcal{P}\widetilde{k}_n^{(1)}\}_{n=1}^\infty\subset\ker V^*$.
Then, we have 
\begin{equation} \label{eq:brennerova}
\lim_{n\rightarrow\infty}\|f_n\|=\lim_{n\rightarrow\infty}\|v_{F,n}+\mathcal{P}\widetilde{k}_n^{(1)}\|=0
\end{equation}
as well as
\begin{align*}
\|f_n&-f_m\|_{\mathfrak{re}_D}^2=\|f_n-f_m\|^2+\|V_F^{1/2}(v_{F,n}-v_{F,m})\|^2\\&+q(A^{-1}_{+,F}D(\widetilde{k}_n^{(1)}-\widetilde{k}^{(1)}_m)+(\widetilde{k}^{(1)}_n-\widetilde{k}^{(1)}_m)+A^{-1}_{+,F}D(\widetilde{k}_n^{(2)}-\widetilde{k}^{(2)}_m)+(\widetilde{k}^{(2)}_n-\widetilde{k}^{(2)}_m))\overset{n,m\rightarrow\infty}{\longrightarrow}0\:,
\end{align*}
which ---  using \eqref{eq:schee} --- simplifies to
\begin{align*}
&\|f_n-f_m\|_{\mathfrak{re}_D}^2\\=&\|f_n-f_m\|^2+\|V_F^{1/2}(v_{F,n}-v_{F,m})\|^2+q(A^{-1}_{+,F}D(\widetilde{k}_n^{(1)}-\widetilde{k}^{(1)}_m)+(\widetilde{k}^{(1)}_n-\widetilde{k}^{(1)}_m))\overset{n,m\rightarrow\infty}{\longrightarrow}0\:.
\end{align*}
Now, since 
\begin{equation*}
\varepsilon\|v_{F,n}-v_{F,m}\|\leq\|V_F^{1/2}(v_{F,n}-v_{F,m})\|\overset{n,m\rightarrow\infty}{\longrightarrow}0
\end{equation*}
we have that $\{v_{F,n}\}_{n=1}^\infty$ converges to an element $v_F\in\mathcal{D}(V_F^{1/2})$. Since $f_n=v_{F,n}+\mathcal{P}\widetilde{k}_n^{(1)}\overset{n\rightarrow\infty}{\longrightarrow}0$, we have that $\{\mathcal{P}\widetilde{k}_n^{(1)}\}_{n=1}^\infty$ converges to $-v_F$. However, since $\mathcal{PD}(D)$ is finite-dimensional, $\{\mathcal{P}\widetilde{k}_n^{(1)}\}_{n=1}^\infty$ converges to an element of $\mathcal{PD}(D)\subset\ker V^*$, from which we get $v_F\in\ker V^*$. But since $\mathcal{D}(V_F^{1/2})\cap\ker V^*=\{0\}$, we get that $v_F=\lim_{n\rightarrow\infty} v_{F,n}=-\lim_{n\rightarrow\infty}\mathcal{P}\widetilde{k}_n^{(1)}=0$. Moreover, the projection $\mathcal{P}$ is injective on $(\mathcal{D}(D)//\mathcal{D}_2)$, which is finite-dimensional. Thus, there exists a number $\varepsilon'>0$ such that 
$$\varepsilon'\|\widetilde{k}_n^{(1)}\|\leq \|\mathcal{P}\widetilde{k}_n^{(1)}\|\overset{n\rightarrow\infty}{\longrightarrow}0\:,$$
which implies that 
\begin{equation}\label{eq:grundlage}
\|\widetilde{k}_n^{(1)}\|\overset{n\rightarrow\infty}{\longrightarrow}0\:.
\end{equation}
Now, since $\dim\mathcal{D}(D)<\infty$, there exists a constant $M<\infty$ such that
\begin{equation} \label{eq:wolga}
q(A_F^{-1}D\widetilde{k}_n^{(1)}+\widetilde{k}_n^{(1)})\leq M\|\widetilde{k}_n^{(1)}\|^2\overset{n\rightarrow\infty}{\longrightarrow}0 \quad\text{by \eqref{eq:grundlage}}\:.
\end{equation}
Altogether, this shows that $\lim_{n\rightarrow\infty}\|f_n\|_{\mathfrak{re}_D}=0$:
\begin{align*}
\|f_n\|_{\mathfrak{re}_D}^2&=\|f_n\|^2+\Real\langle f_n,A_Df_n\rangle=\|v_{F,n}+\mathcal{P}\widetilde{k}_n^{(1)}\|^2+\|V_F^{1/2}v_{F,n}\|^2+q(A_{+,F}^{-1}D\widetilde{k}_n^{(1)}+\widetilde{k}_n^{(1)})\overset{n\rightarrow\infty}{\longrightarrow}0\:,
\end{align*}
where we have used \eqref{eq:brennerova} and \eqref{eq:wolga} as well as the fact $\{v_{F,n}\}_{n=1}^\infty$ is a sequence of elements in $\mathcal{D}(V_F^{1/2})$ that converges to $0$ and that is Cauchy with respect to $\|V_F^{1/2}\cdot\|$, which implies that $V_F^{1/2}v_{F,n}\overset{n\rightarrow\infty}{\longrightarrow}0$ as well. This shows that $\mathfrak{re}_{D,0}$ is closable. This finishes the proof.
\end{proof}
\begin{remark} At this point, we also point out the closability results in \cite[Thm.\ 1]{MMM92} and especially in the very recent publication \cite{DHM}, where similar closability problems for the imaginary part of maximally dissipative relations have been investigated.
\end{remark}
\begin{example}[Continuation of Example \ref{ex:closfailprep}]
\label{ex:closfail}
\normalfont Let us give an example of a dual pair $(C_+,C_-)$ satisfying the assumptions of Theorem \ref{thm:dido} for which there exists a proper maximally accretive extension $C_D$ for which $\mathfrak{re}_{D,0}$ is not closable. Let $\mathcal{H}=L^2(0,1)$ and consider the dual pair of operators
\begin{align*}
C_\pm:\qquad\mathcal{D}(C_\pm)&=\mathcal{C}_c^\infty(0,1), \quad
(C_\pm f)(x)=\pm if''(x)+\frac{\gamma}{x^2}f(x)\:,
\end{align*}
where for simplicity, we choose $\gamma\geq\sqrt{3}$ in order to ensure that $\dim \ker C_+^*=\dim \ker C_-^*=1$. In Example \ref{ex:closfailprep}, we have already shown that $(C_+,C_-)$ is in the extremal case \eqref{eq:extremal}. Moreover, the real part $V$ is just given by the multiplication by the function $\gamma x^{-2}$, with domain $\mathcal{C}_c^\infty(0,1)$. Since $V$ is essentially selfadjoint, we get that $V_F=V_K=\overline{V}$, which is the maximal multiplication operator by the function $\gamma x^{-2}$. This obviously implies that $V_{K}^{1/2}=V_F^{1/2}=\overline{V}^{1/2}$ is the maximal multiplication operator by the function $\sqrt{\gamma}x^{-1}$. Now, observe that the operators $C_\pm$ are obtained from the operators $A_\pm$ in Example \ref{ex:mabaker} via $C_\pm=\mp iA_\pm$. Thus,  it immediately follows that
\begin{align*}
\ker(C_-^*)=\ker(A_-^*)=\spann\{x^{\omega_+}\}\quad\text{and}\quad\ker(C_+^*)=\ker(A_+^*)=\spann\{x^{\overline{\omega_+}}\}\:.
\end{align*}
For the domain of the auxiliary operator $D$ which describes the proper maximally accretive extensions of $(C_+,C_-)$, there are again two possibilities. The choice $\mathcal{D}(D)=\{0\}$ corresponds to the Friedrichs extension $C_{+,F}$ of $C_+$. The second possibility is the choice $\mathcal{D}(D)=\spann\{x^{\omega_+}\}$.
As in Example \ref{ex:mabaker}, any map $D$ from $\spann\{x^{\omega_+}\}$ into $\spann\{x^{\overline{\omega_+}}\}$ has to be of the form
\begin{equation*}
Dx^{\omega_+}=d x^{\overline{\omega_+}}\:,
\end{equation*}
where $d\in\C$. In this case, Theorem \ref{thm:dido} implies that $C_D$ is accretive if and only if
\begin{align} \label{eq:xp}
\Real[x^{\omega_+},Dx^{\omega_+}]&=\Real[x^{\omega_+},dx^{\overline{\omega_+}}]\geq\|\overline{V}^{1/2}x^{\omega_+}\|^2\notag\\
\Leftrightarrow\qquad \Real(d\mu(\omega_+))&\geq\nu(\omega_+)\:,
\end{align}
where the numbers $\mu(\omega_+)$ and $\nu(\omega_+)$ are given by
\begin{align*}
\mu(\omega_+)&=\frac{1}{2\overline{\omega_+}+1}-\frac{2\gamma}{\gamma+i(\overline{\omega_+}+2)(\overline{\omega_+}+1)}\left[\frac{1}{2\overline{\omega_+}+1}-\frac{1}{\omega_++\overline{\omega_+}-1}\right]\\
\nu(\omega_+)&=\frac{\gamma}{\omega_++\overline{\omega_+}-1}\:.
\end{align*}
However, since $x^{\omega_+}\in\mathcal{D}(V_K^{1/2})=\mathcal{D}(\overline{V}^{1/2})=\mathcal{D}(V_F^{1/2})$, this implies by Theorem \ref{thm:closable} that $\mathfrak{re}_{D,0}$ is closable if and only if 
\begin{align*} \label{eq:salvatormundi}
q(C_{+,F}^{-1}Dx^{\omega_+}+x^{\omega_+})&=\Real[x^{\omega_+},Dx^{{\omega_+}}]-\|\overline{V}^{1/2}x^{\omega_+}\|^2=0\\
\Leftrightarrow\qquad\qquad\qquad \Real(d\mu(\omega_+))&=\nu(\omega_+)\:,
\end{align*}
i.e.\ if only if we have equality in \eqref{eq:xp}.

\end{example}
\begin{remark} \normalfont If $A_D$ is sectorial and in the non-extremal case, which means that there exists an $\varepsilon\in(0,2\pi)$ such that the operators $(e^{\pm i\varepsilon}A_D)$ are both accretive, recall that by \cite[Chapter VI, Thm. 1.27]{Kato}, the form $\mathfrak{re}_{D,0}$ is always closable. This is also true for the case $\dim(\mathcal{D}(D))=\infty$ for which the above theorem does not apply. Hence, the only situation for which we do not have a closability result for $\mathfrak{re}_{D,0}$ is when we are in the extremal case \eqref{eq:extremal} {\emph{and}} if we have $\dim(\mathcal{D}(D))=\infty$. \label{rem:closable}
\end{remark}
\section{A partial order on the real parts} \label{sec:order}
If $\mathfrak{re}_{D,0}$ is closable, there exists a selfadjoint operator $V_D$ associated to the closure $\mathfrak{re}_D$. Let us now show that this operator is an extension of $V$:
\begin{lemma} Let $(A_+,A_-)$ be a dual pair that satisfies the assumptions of Theorem \ref{thm:dido}. Moreover, assume that $\mathfrak{re}_{D,0}$ is closable. Then, $V\subset V_D$. \label{lemma:brexit}
\end{lemma}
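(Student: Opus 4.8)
The plan is to apply the first representation theorem for closed nonnegative forms \cite[VI, Thm.\ 2.1]{Kato}. To establish $V\subset V_D$, I would fix an arbitrary $f\in\mathcal{D}(V)=\mathcal{D}(A_+)$ and verify two things: that $f$ lies in the form domain $\mathcal{D}(\mathfrak{re}_D)$, and that $\mathfrak{re}_D(f,h)=\langle Vf,h\rangle$ for every $h\in\mathcal{D}(\mathfrak{re}_D)$. Once both are shown, the representation theorem yields $f\in\mathcal{D}(V_D)$ and $V_Df=Vf$, which is exactly $V\subset V_D$. Membership in the form domain is immediate, since $\mathcal{D}(V)\subset\mathcal{D}(A_D)=\mathcal{D}(\mathfrak{re}_{D,0})\subset\mathcal{D}(\mathfrak{re}_D)$.

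The heart of the argument is a computation on the core $\mathcal{D}(A_D)$. For $f\in\mathcal{D}(V)$ and $h\in\mathcal{D}(A_D)$, the sesquilinear form obtained by polarizing $\mathfrak{re}_{D,0}$ is $\mathfrak{re}_{D,0}(f,h)=\tfrac12\big(\langle f,A_Dh\rangle+\overline{\langle h,A_Df\rangle}\big)$. Since $\mathcal{D}(A_+)\subset\mathcal{D}(\overline{A_+})\subset\mathcal{D}(A_D)$ and $A_D$ is a restriction of $A_-^*$ with $A_+\subset A_-^*$, we have $A_Df=A_+f=iSf+Vf$, while $A_Dh=A_-^*h$. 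Using $f\in\mathcal{D}(A_-)$ and the adjoint relation $\langle f,A_-^*h\rangle=\langle A_-f,h\rangle=\langle(-iS+V)f,h\rangle$, together with $\overline{\langle h,A_+f\rangle}=\langle(iS+V)f,h\rangle$, the two contributions $\mp iSf$ cancel (this is precisely where the symmetry of $S$ enters), leaving $\mathfrak{re}_{D,0}(f,h)=\langle Vf,h\rangle$. I would carry out this cancellation with care, since the inner product is antilinear in its first slot.

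Finally, I would promote this identity from the core to the full form domain. Because $\mathfrak{re}_D$ is the closure of $\mathfrak{re}_{D,0}$, the space $\mathcal{D}(A_D)$ is dense in $\mathcal{D}(\mathfrak{re}_D)$ with respect to the form norm $\|\cdot\|_{\mathfrak{re}_D}$, and the two sesquilinear forms agree on $\mathcal{D}(A_D)$. Both functionals $h\mapsto\mathfrak{re}_D(f,h)$ (bounded by Cauchy--Schwarz for the nonnegative form, $|\mathfrak{re}_D(f,h)|\le\|f\|_{\mathfrak{re}_D}\|h\|_{\mathfrak{re}_D}$) and $h\mapsto\langle Vf,h\rangle$ (bounded since $|\langle Vf,h\rangle|\le\|Vf\|\,\|h\|\le\|Vf\|\,\|h\|_{\mathfrak{re}_D}$) are continuous in $\|\cdot\|_{\mathfrak{re}_D}$, so the equality $\mathfrak{re}_D(f,h)=\langle Vf,h\rangle$ extends from $\mathcal{D}(A_D)$ to all $h\in\mathcal{D}(\mathfrak{re}_D)$, completing the verification. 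The algebraic identity is routine; I expect the only point requiring attention to be this last density-and-continuity step, namely confirming that $\mathcal{D}(A_D)$ is a genuine form core and that the representation theorem then applies to pin down $V_Df=Vf$.
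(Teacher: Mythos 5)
Your proposal is correct and rests on the same key observation as the paper's proof: for $f\in\mathcal{D}(V)$ the symmetric parts $\pm iS$ cancel in the polarized form, so that $\mathfrak{re}_{D,0}(f,\cdot)$ agrees with $\langle Vf,\cdot\rangle$. If anything, your version is the more complete one: the paper's two-line argument tacitly asserts $\mathcal{D}(\mathfrak{re}_{D,0})\subset\mathcal{D}(V_D)$ from the outset and then tests $\langle f,(V-V_D)g\rangle=0$ against the dense set $\mathcal{D}(V)$, whereas you legitimately \emph{derive} the membership $f\in\mathcal{D}(V_D)$ by extending the identity to the whole form domain (using that $\mathcal{D}(A_D)$ is a form core and both functionals are $\|\cdot\|_{\mathfrak{re}_D}$-continuous) and then invoking the first representation theorem.
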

\begin{proof} Observe that for any $f,g\in\mathcal{D}(A_+)=\mathcal{D}(V)=\mathcal{D}(\mathfrak{re}_{D,0})\subset\mathcal{D}(V_D)$, we have
\begin{equation*}
\langle f,Vg\rangle=\mathfrak{re}_{D,0}(f,g)=\mathfrak{re}_D(f,g)=\langle f,V_Dg\rangle\:.
\end{equation*}
Consequently, we get $\langle f,(V-V_D)g\rangle=0$, which implies $V_Dg=Vg$ since $f$ was an arbitrary element of the dense set $\mathcal{D}(V)$.
\end{proof} 
Next, let us determine the form domain of $V_D$. Using that $V_D$ is a non-negative selfadjoint extension of $V$, we know by \eqref{eq:fussball} that $\mathcal{D}(V_F^{1/2})\subset\mathcal{D}(V_D^{1/2})$ and by \eqref{eq:alonsosimon} that there exists a subspace $\mathcal{M}\subset\ker V^*$ such that $\mathcal{D}(V_D^{1/2})=\mathcal{D}(V_F^{1/2})\dot{+}\mathcal{M}$. In the case that $\mathcal{PD}(D)$ is finite-dimensional, we will show that $\mathcal{M}=\mathcal{PD}(D)$, i.e.\ the part of $\mathcal{D}(D)$ that can be projected onto $\ker V^*$.
\begin{lemma} Let $V$ be strictly positive, i.e. $V\geq\varepsilon>0$ and assume that $\mathfrak{re}_{D,0}$ is closable. Then, the domain of $V_D^{1/2}$ is given by
\begin{equation*}
\mathcal{D}(V_D^{1/2})=\overline{\mathcal{D}(V_F^{1/2})\dot{+}\mathcal{P}\mathcal{D}(D)}^{\|\cdot\|_{\mathfrak{re}_D}}\:.
\end{equation*}
In particular, if $\dim\left(\mathcal{PD}(D)\right)<\infty$, we get
$$\mathcal{D}(V_D^{1/2})=\mathcal{D}(V_F^{1/2})\dot{+}\mathcal{P}\mathcal{D}(D)\:.$$
\label{lemma:sts}
\end{lemma}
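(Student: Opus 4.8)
The target is the form domain $\mathcal{D}(V_D^{1/2})=\mathcal{D}(\mathfrak{re}_D)$, which by construction is the $\|\cdot\|_{\mathfrak{re}_D}$-closure of $\mathcal{D}(A_D)$. The plan is to sandwich this closure between $\mathcal{D}(V_F^{1/2})\dot{+}\mathcal{P}\mathcal{D}(D)$ and its $\|\cdot\|_{\mathfrak{re}_D}$-closure, using only domain containments. The one analytic input I set up first is that $\mathcal{D}(V_F^{1/2})$ is closed inside $\bigl(\mathcal{D}(V_D^{1/2}),\|\cdot\|_{\mathfrak{re}_D}\bigr)$. Since $V\subset V_D$ by Lemma \ref{lemma:brexit}, the operator $V_D$ is a non-negative selfadjoint extension of $V$, so \eqref{eq:fussball} gives $V_D\leq V_F$ and hence $\mathcal{D}(V_F^{1/2})\subset\mathcal{D}(V_D^{1/2})$; moreover, writing $V_D=V_{\mathfrak{M},B}$ (Proposition \ref{prop:kreinistfein}, applied to $\overline{V}$) and using the Alonso--Simon decomposition \eqref{eq:alonsosimon} with $\eta=0$ yields $\|V_D^{1/2}h\|=\|V_F^{1/2}h\|$ for all $h\in\mathcal{D}(V_F^{1/2})$, so $\|\cdot\|_{\mathfrak{re}_D}$ agrees with $\|\cdot\|_{V_F}$ there. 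As $\mathcal{D}(V_F^{1/2})$ is complete under $\|\cdot\|_{V_F}$, it is $\|\cdot\|_{\mathfrak{re}_D}$-closed.

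For the inclusion $\mathcal{D}(V_D^{1/2})\subseteq\overline{\mathcal{D}(V_F^{1/2})\dot{+}\mathcal{P}\mathcal{D}(D)}^{\|\cdot\|_{\mathfrak{re}_D}}$ I would prove the sharper algebraic statement $\mathcal{D}(A_D)\subset\mathcal{D}(V_F^{1/2})\dot{+}\mathcal{P}\mathcal{D}(D)$ and then close up. Writing a generic element of $\mathcal{D}(A_D)$ as $f=f_0+A_{+,F}^{-1}D\widetilde{k}+\widetilde{k}+A_{+,F}^{-1}k$ with $f_0\in\mathcal{D}(\overline{A_+})$, $\widetilde{k}\in\mathcal{D}(D)$ and $k\in\mathfrak{N}$, the terms $f_0$, $A_{+,F}^{-1}D\widetilde{k}$ and $A_{+,F}^{-1}k$ all lie in $\mathcal{D}(V_F^{1/2})$: the first because, by \eqref{eq:mindestlohn}, the $A_+$-graph norm dominates the Friedrichs form norm on $\mathcal{D}(V)$, so $\mathcal{D}(\overline{A_+})\subset\mathcal{D}(V_F^{1/2})$; the other two because $\mathcal{D}(A_{+,F})\subset\mathcal{D}(V_F^{1/2})$ by Theorem \ref{prop:festnumrange}. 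Splitting $\widetilde{k}=(\idty-\mathcal{P})\widetilde{k}+\mathcal{P}\widetilde{k}$ absorbs the part $(\idty-\mathcal{P})\widetilde{k}\in\mathcal{D}(V_F^{1/2})$ into the above and isolates $\mathcal{P}\widetilde{k}\in\mathcal{P}\mathcal{D}(D)$. Taking $\|\cdot\|_{\mathfrak{re}_D}$-closures and using $\mathcal{D}(V_D^{1/2})=\overline{\mathcal{D}(A_D)}^{\|\cdot\|_{\mathfrak{re}_D}}$ gives the claimed inclusion.

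For the reverse inclusion I show $\mathcal{D}(V_F^{1/2})\dot{+}\mathcal{P}\mathcal{D}(D)\subset\mathcal{D}(V_D^{1/2})$ and invoke that $\mathcal{D}(V_D^{1/2})$ is $\|\cdot\|_{\mathfrak{re}_D}$-closed. The summand $\mathcal{D}(V_F^{1/2})$ is covered by the first paragraph. For a generator $\mathcal{P}\widetilde{k}$ with $\widetilde{k}\in\mathcal{D}(D)$, I use that $v:=A_{+,F}^{-1}D\widetilde{k}+\widetilde{k}\in\mathcal{D}(A_D)\subset\mathcal{D}(V_D^{1/2})$ and write $\mathcal{P}\widetilde{k}=v-A_{+,F}^{-1}D\widetilde{k}-(\idty-\mathcal{P})\widetilde{k}$, whose two subtracted terms lie in $\mathcal{D}(V_F^{1/2})\subset\mathcal{D}(V_D^{1/2})$; hence $\mathcal{P}\widetilde{k}\in\mathcal{D}(V_D^{1/2})$. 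The sum is direct because $\mathcal{D}(V_F^{1/2})\cap\ker V^*=\{0\}$ while $\mathcal{P}\mathcal{D}(D)\subset\ker V^*$ (cf.\ \eqref{eq:audietis}). Closing up and combining the two inclusions proves the general formula.

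Finally, when $\dim(\mathcal{P}\mathcal{D}(D))<\infty$ the closure is superfluous: the sum of the $\|\cdot\|_{\mathfrak{re}_D}$-closed subspace $\mathcal{D}(V_F^{1/2})$ and the finite-dimensional subspace $\mathcal{P}\mathcal{D}(D)$ is again closed in the complete space $\bigl(\mathcal{D}(V_D^{1/2}),\|\cdot\|_{\mathfrak{re}_D}\bigr)$, by the standard fact that closed plus finite-dimensional is closed. Hence $\mathcal{D}(V_D^{1/2})=\mathcal{D}(V_F^{1/2})\dot{+}\mathcal{P}\mathcal{D}(D)$. The only genuinely delicate points are the norm identification $\|\cdot\|_{\mathfrak{re}_D}=\|\cdot\|_{V_F}$ on $\mathcal{D}(V_F^{1/2})$ (which is what makes this subspace closed and lets the sandwich close) together with the finite-dimensionality needed to drop the closure; everything else is bookkeeping of the domain inclusions.
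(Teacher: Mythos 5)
Your proposal is correct and follows essentially the same route as the paper: the sandwich $\mathcal{D}(A_D)\subset\mathcal{D}(V_F^{1/2})\dot{+}\mathcal{P}\mathcal{D}(D)\subset\mathcal{D}(V_D^{1/2})$ obtained from the same two decompositions, followed by taking $\|\cdot\|_{\mathfrak{re}_D}$-closures, and finally the closed-plus-finite-dimensional argument to drop the closure. The only cosmetic difference is that you identify $\|\cdot\|_{\mathfrak{re}_D}$ with $\|\cdot\|_{V_F}$ on $\mathcal{D}(V_F^{1/2})$ via the Alonso--Simon formula \eqref{eq:alonsosimon} applied to $V_D$, whereas the paper computes $\|f\|_{\mathfrak{re}_D}^2=\|f\|^2+\langle f,Vf\rangle$ directly on $\mathcal{D}(V)$ and concludes $\overline{\mathcal{D}(V)}^{\|\cdot\|_{\mathfrak{re}_D}}=\mathcal{D}(V_F^{1/2})$ --- both rest on the same fact.
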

\begin{proof}
By Theorem \ref{prop:festnumrange}, we have that $\mathcal{D}(A_F)\subset\mathcal{D}(V_F^{1/2})$, which implies that any element of $$\mathcal{D}(A_D)=\mathcal{D}(A)\dot{+}\{A_F^{-1}D\widetilde{k}+\widetilde{k}:\widetilde{k}\in\mathcal{D}(D)\}\dot{+}\{A_F^{-1}k:k\in\ker A^*\cap\mathcal{D}(D)^{[\perp]}\}$$ can be written as
\begin{equation*}
f+A_F^{-1}D\widetilde{k}+\widetilde{k}+A_F^{-1}k=\underbrace{(f+A_F^{-1}D\widetilde{k}+(\idty-\mathcal{P})\widetilde{k}+A_F^{-1}k)}_{\in\mathcal{D}(V_F^{1/2})}+\underbrace{\mathcal{P}\widetilde{k}}_{\in\mathcal{PD}(D)}\:,
\end{equation*}
which implies that $\mathcal{D}(A_D)\subset\mathcal{D}(V_F^{1/2})\dot{+}\mathcal{PD}(D)$. On the other hand, we have by Lemma \ref{lemma:brexit} that $V_D$ is a positive selfadjoint extension of $V$, from which we get by \cite{Alonso-Simon} that $\mathcal{D}(V_F^{1/2})\subset\mathcal{D}(V_D^{1/2})$. As any $\mathcal{P}\widetilde{k}\in\mathcal{PD}(D)$ can be written as
$$\mathcal{P}\widetilde{k}=\underbrace{(A_F^{-1}D\widetilde{k}+\widetilde{k})}_{\in\mathcal{D}(A_D)}-\underbrace{(A_F^{-1}D\widetilde{k}+(\idty-\mathcal{P})\widetilde{k})}_{\in\mathcal{D}(V_F^{1/2})}$$
and since $\mathcal{D}(A_D)\subset\mathcal{D}(V_D^{1/2})$ and $\mathcal{D}(V_F^{1/2})\subset\mathcal{D}(V_D^{1/2})$, this implies that $\mathcal{P}\widetilde{k}\in\mathcal{D}(V_D^{1/2})$, and thus $\mathcal{PD}(D)\subset\mathcal{D}(V_D^{1/2})$. Consequently, we have $\mathcal{D}(A_D)\subset\mathcal{D}(V_F^{1/2})\dot{+}\mathcal{PD}(D)\subset\mathcal{D}(V_D^{1/2})$ and since 
$\overline{\mathcal{D}(A_D)}^{\|\cdot\|_{\mathfrak{re}_D}}=\mathcal{D}(V_D^{1/2})$, we get
$$ \mathcal{D}(V_D^{1/2})=\overline{\mathcal{D}(A_D)}^{\|\cdot\|_{\mathfrak{re}_D}}\subset \overline{\mathcal{D}(V_F^{1/2})\dot{+}\mathcal{P}\mathcal{D}(D)}^{\|\cdot\|_{\mathfrak{re}_D}}\subset \mathcal{D}(V_D^{1/2})\:, $$
which proves the first assertion of the lemma. Next, let us show that $\mathcal{D}(V_F^{1/2})$ is a closed subspace of $\mathcal{D}(V_D^{1/2})$ with respect to $\|\cdot\|_{\mathfrak{re}_D}$. This follows from the fact that for any $f\in\mathcal{D}(V)$ we get that
\begin{equation*}
\|f\|_{\mathfrak{re}_D}^2=\|f\|^2+\Real\langle f,A_Df\rangle=\|f\|^2+\langle f,Vf\rangle\:,
\end{equation*}
which means that $\overline{\mathcal{D}(V)}^{\|\cdot\|_{\mathfrak{re}_D}}=\mathcal{D}(V_F^{1/2})$. Since $\dim(\mathcal{PD}(D))<\infty$, we have by \cite[Problem 13]{Halmos} that $\mathcal{D}(V_F^{1/2})\dot{+}\mathcal{PD}(D)$ is a closed subspace of $\mathcal{D}(V_D^{1/2})$ with respect to the $\|\cdot\|_{\mathfrak{re}_D}$-norm and by what we have shown before, this yields
\begin{equation*}
\mathcal{D}(V_D^{1/2})=\overline{\mathcal{D}(V_F^{1/2})\dot{+}\mathcal{PD}(D)}^{\|\cdot\|_{\mathfrak{re}_D}}=\mathcal{D}(V_F^{1/2})\dot{+}\mathcal{PD}(D)\subset\mathcal{D}(V_D^{1/2})\:,
\end{equation*}
which is the desired result.
\end{proof}
Finally, let us determine the action of $\mathfrak{re}_D$.
\begin{theorem} \label{thm:townsende} Let $V$ be as in Lemma \ref{lemma:sts} and moreover, assume that $$\dim\mathcal{PD}(D)<\infty\:.$$ Then, there exists a non-negative selfadjoint operator $B$ with $\mathcal{D}(B)=\mathcal{PD}(D)$ such that for any $v_F\in\mathcal{D}(V_F^{1/2})$ and any $\eta\in\mathcal{PD}(D)$, we have
\begin{equation*}
\mathfrak{re}_D(v_F+\eta)=\|V_D^{1/2}(v_F+\eta)\|^2=\|V_F^{1/2}v_F\|^2+q_B(\eta)\:,
\end{equation*}
where $q_B$ denotes the quadratic form associated to $B$. It is given by
\begin{equation*}
q_B(\eta)=\Real[\mathcal{P}^{-1}\eta,D\mathcal{P}^{-1}\eta]-\|V_K^{1/2}\mathcal{P}^{-1}\eta\|^2\:.
\end{equation*}
Here, $\mathcal{P}^{-1}$ denotes the inverse of $\mathcal{P}$ restricted to a subspace of $\mathcal{D}(D)$ that is complementary to $\mathcal{D}(D)\cap\mathcal{D}(V_F^{1/2})$. The form $q_B$ does not depend on the specific choice of this subspace. Moreover, if we choose $\{\eta_i\}_{i=1}^n$ to be an orthonormal basis of $\mathcal{PD}(D)$, the elements of the matrix representation of $B$ with respect to $\{\eta_i\}_{i=1}^n$ are given by
\begin{align*}
B&=(b_{ij})_{i,j=1}^n\quad\text{where}\\ b_{ij}&=\frac{1}{2}\left([\mathcal{P}^{-1}\eta_i,D\mathcal{P}^{-1}\eta_j]+[D\mathcal{P}^{-1}\eta_i,\mathcal{P}^{-1}\eta_j]\right)-\langle V_K^{1/2}\mathcal{P}^{-1}\eta_i,V_K^{1/2}\mathcal{P}^{-1}\eta_j\rangle\:.
\end{align*}
\end{theorem}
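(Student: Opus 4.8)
The plan is to reduce everything to the classical Birman--Kre\u\i n--Vishik--Grubb picture via $V_D$ and then to identify the abstract auxiliary operator $B$ explicitly. Since $\mathfrak{re}_{D,0}$ is assumed closable, Lemma~\ref{lemma:brexit} tells us that $V_D$ is a non-negative selfadjoint \emph{extension} of $V$, so Proposition~\ref{prop:kreinistfein} together with the Alonso--Simon formula \eqref{eq:alonsosimon} produces a closed subspace $\mathfrak{M}\subseteq\ker V^*$ and a non-negative selfadjoint $B$ on $\mathfrak{M}$ with $\mathcal{D}(V_D^{1/2})=\mathcal{D}(V_F^{1/2})\dot{+}\mathcal{D}(B^{1/2})$ and $\|V_D^{1/2}(v_F+\eta)\|^2=\|V_F^{1/2}v_F\|^2+\|B^{1/2}\eta\|^2$. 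Comparing this decomposition with the one in Lemma~\ref{lemma:sts}, and using the uniqueness of the $\ker V^*$-component in the direct sum $\mathcal{D}(V_F^{1/2})\dot{+}\ker V^*$, forces $\mathcal{D}(B^{1/2})=\mathcal{P}\mathcal{D}(D)$; finite-dimensionality then gives $\mathcal{D}(B)=\mathcal{P}\mathcal{D}(D)$ and reduces the first displayed identity to the assertion $q_B(\eta):=\|B^{1/2}\eta\|^2=\Real[\mathcal{P}^{-1}\eta,D\mathcal{P}^{-1}\eta]-\|V_K^{1/2}\mathcal{P}^{-1}\eta\|^2$.

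The central computation is the evaluation of $q_B$. Fix $\eta\in\mathcal{P}\mathcal{D}(D)$ and write $\eta=\mathcal{P}\widetilde{k}$ with $\widetilde{k}=\mathcal{P}^{-1}\eta\in\mathcal{D}(D)$. I would test $\mathfrak{re}_D$ on the distinguished element $v:=A_{+,F}^{-1}D\widetilde{k}+\widetilde{k}\in\mathcal{D}(A_D)$, which by Theorem~\ref{prop:festnumrange} decomposes as $v=v_F+\eta$ with $v_F:=A_{+,F}^{-1}D\widetilde{k}+(\idty-\mathcal{P})\widetilde{k}\in\mathcal{D}(V_F^{1/2})$. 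Evaluating $\mathfrak{re}_D(v)$ in two ways and equating them is the crux: the form decomposition gives $\mathfrak{re}_D(v)=\|V_F^{1/2}v_F\|^2+q_B(\eta)$, while Lemma~\ref{lemma:cernohorsky} (with $f=0$) gives $\mathfrak{re}_D(v)=\|V_K^{1/2}v\|^2+q(v)$. Applying the Alonso--Simon identity \eqref{eq:alonsosimon} to $V_K=V_{\ker V^*,{\bf 0}}$ yields $\|V_K^{1/2}v\|^2=\|V_K^{1/2}(v_F+\eta)\|^2=\|V_F^{1/2}v_F\|^2$, so the two representations collapse to $q_B(\eta)=q(v)$. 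Finally, the computation carried out in the proof of Theorem~\ref{thm:dido} (set $k=0$ there) identifies $q(v)=\Real[\widetilde{k},D\widetilde{k}]-\|V_K^{1/2}\widetilde{k}\|^2$, which is exactly the claimed value of $q_B(\eta)$.

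Two smaller points complete the first part. For well-definedness of $q_B$ (independence of the choice of $\mathcal{P}^{-1}$), note that two preimages of $\eta$ under $\mathcal{P}$ differ by an element of $\mathcal{D}_2=\mathcal{D}(D)\cap\mathcal{D}(V_F^{1/2})=\ker(\mathcal{P}\!\upharpoonright_{\mathcal{D}(D)})$, and invariance of $q(A_{+,F}^{-1}D\widetilde{k}+\widetilde{k})$ under shifts by $\mathcal{D}_2$ is precisely \eqref{eq:schee} from the proof of Theorem~\ref{thm:closable}; this is exactly where the closability hypothesis is used. Non-negativity of $B$ is immediate from $q_B(\eta)=q(v)\geq 0$, which is the accretivity condition \eqref{eq:dissipativity}.

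For the matrix representation I would polarize the real quadratic form $q_B$. With $\widetilde{k}_i:=\mathcal{P}^{-1}\eta_i$, the term $-\|V_K^{1/2}\widetilde{k}\|^2$ polarizes directly to $-\langle V_K^{1/2}\widetilde{k}_i,V_K^{1/2}\widetilde{k}_j\rangle$, while the real part $\Real[\widetilde{k},D\widetilde{k}]$ of the sesquilinear form $(\eta,\eta')\mapsto[\mathcal{P}^{-1}\eta,D\mathcal{P}^{-1}\eta']$ has Hermitianization $\tfrac12\bigl([\widetilde{k}_i,D\widetilde{k}_j]+\overline{[\widetilde{k}_j,D\widetilde{k}_i]}\bigr)$. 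To bring this into the stated symmetric shape one rewrites $\overline{[\widetilde{k}_j,D\widetilde{k}_i]}=[D\widetilde{k}_i,\widetilde{k}_j]$, i.e. one needs the conjugate-symmetry $\overline{[f,g]}=[g,f]$ on the relevant kernels, which unwinds to $\langle V_K^{1/2}a,V_K^{1/2}A_{+,F}^{-1}b\rangle=\langle V_K^{1/2}A_{+,F}^{-1}a,V_K^{1/2}b\rangle$ for $a\in\ker A_+^*\cap\mathcal{D}(V_K^{1/2})$ and $b\in\ker A_-^*\cap\mathcal{D}(V_K^{1/2})$. I expect \textbf{this symmetry identity to be the main obstacle}: it requires both a care that $D\widetilde{k}_i\in\mathcal{D}(V_K^{1/2})$ so that $[D\widetilde{k}_i,\cdot]$ is defined, and a direct verification obtained by polarizing the identity $\Real\langle v,A_{+,F}v\rangle=\|V_K^{1/2}v\|^2$ of Theorem~\ref{prop:festnumrange} in conjunction with the adjoint relation $A_{+,F}=(A_{-,F})^*$ from Theorem~\ref{prop:friedrichsfreude}. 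Everything else is routine bookkeeping of the projection $\mathcal{P}$ and of the direct-sum decompositions.
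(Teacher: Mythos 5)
Your proposal is correct, and it reaches the formula for $q_B$ by a genuinely different route than the paper. The paper works from the form side: for an \emph{arbitrary} $v_F\in\mathcal{D}(V_F^{1/2})$ and $\widetilde{k}\in\mathcal{S}$ it constructs an explicit sequence $g_n=f_n+A_{+,F}^{-1}D\widetilde{k}+\widetilde{k}$ converging to $v_F+\mathcal{P}\widetilde{k}$, checks that it is $\|\cdot\|_{\mathfrak{re}_D}$-Cauchy, and computes $\lim_n\|g_n\|^2_{\mathfrak{re}_D}$ to obtain $\mathfrak{re}_D(v_F+\eta)=\|V_F^{1/2}v_F\|^2+\Real[\widetilde{k},D\widetilde{k}]-\|V_K^{1/2}\widetilde{k}\|^2$ directly, and only afterwards compares with the Alonso--Simon representation to read off $q_B$. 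You invert this order: you first invoke Alonso--Simon abstractly (via Lemmas \ref{lemma:brexit} and \ref{lemma:sts}) to get the splitting $\mathfrak{re}_D(v_F+\eta)=\|V_F^{1/2}v_F\|^2+q_B(\eta)$ with $\mathcal{D}(B)=\mathcal{P}\mathcal{D}(D)$, and then evaluate both representations at the single operator-domain element $v=A_{+,F}^{-1}D\widetilde{k}+\widetilde{k}\in\mathcal{D}(A_D)$, where $\mathfrak{re}_D$ agrees with $\mathfrak{re}_{D,0}$, so that Lemma \ref{lemma:cernohorsky} and the identity $\|V_K^{1/2}(v_F+\eta)\|^2=\|V_F^{1/2}v_F\|^2$ yield $q_B(\eta)=q(v)$ with no limiting argument at all. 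This is arguably cleaner; what the paper's sequence computation buys is an independent verification that the form actually splits on all of $\mathcal{D}(V_F^{1/2})\dot{+}\mathcal{P}\mathcal{D}(D)$ rather than importing the splitting wholesale from the classical theory. Both arguments use \eqref{eq:schee} in exactly the same way to make $q_B$ independent of the choice of the complement $\mathcal{S}$. On the matrix entries you are in fact more careful than the paper, which dismisses the polarization as ``direct inspection'': passing from the Hermitianization $\tfrac{1}{2}\bigl([\widetilde{k}_i,D\widetilde{k}_j]+\overline{[\widetilde{k}_j,D\widetilde{k}_i]}\bigr)$ to the stated $\tfrac{1}{2}\bigl([\widetilde{k}_i,D\widetilde{k}_j]+[D\widetilde{k}_i,\widetilde{k}_j]\bigr)$ really does require the conjugate-symmetry $\overline{[g,f]}=[f,g]$ (and the membership $D\widetilde{k}_i\in\mathcal{D}(V_K^{1/2})$) that you flag, and verifying it by polarizing the identity of Theorem \ref{prop:festnumrange} is the right way to close that gap.
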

\begin{proof} For any $\eta\in\mathcal{PD}(D)$, there exists a $\widetilde{k}\in\mathcal{D}(D)$ such that $\eta=\mathcal{P}\widetilde{k}$.  In the case that $\mathcal{D}(V_F^{1/2})\cap\mathcal{D}(D)$ is non-trivial, which means that $\ker(\mathcal{P})\cap\mathcal{D}(D)$ is non-trivial, this choice of $\widetilde{k}$ is not unique as for any $\chi\in\mathcal{D}(D)\cap\mathcal{D}(V_F^{1/2})$ we would still have that $\mathcal{P}(\widetilde{k}+\chi)=\eta$. However, if we choose a subspace of $\mathcal{S}\subset\mathcal{D}(D)$ that is complementary to $\mathcal{D}(V_F^{1/2})\cap\mathcal{D}(D)$ in $\mathcal{D}(D)$, then we can define the inverse of $\mathcal{P}$ on $\mathcal{S}$:
\begin{align*}
\mathcal{P}^{-1}: \mathcal{D}(\mathcal{P}^{-1})&=\mathcal{PS}\\
                       \mathcal{P}\widetilde{k}&\mapsto\widetilde{k},\quad\widetilde{k}\in\mathcal{S}\:.
\end{align*}
 Now, for any $v_F\in\mathcal{D}(V_F^{1/2})$ and $\widetilde{k}\in\mathcal{S}$, let us pick a sequence $\{f_n\}_{n=1}^\infty\subset\mathcal{D}(V)$ such that
\begin{align*}
\|\cdot\|_{\mathfrak{re}_D}-\lim_{n\rightarrow\infty} f_n=\left[v_F-A_F^{-1}D\widetilde{k}-(\idty-\mathcal{P})\widetilde{k}\right]\in\mathcal{D}(V_F^{1/2})\:,
\end{align*}
where $``\|\cdot\|_{\mathfrak{re}_D}-\lim"$ denotes the limit with respect to the $\|\cdot\|_{\mathfrak{re}_D}$-norm.
This implies that the sequence $g_n:=f_n+A_F^{-1}D\widetilde{k}+\widetilde{k}$ converges to $v_F+\mathcal{P}\widetilde{k}$ in the usual norm $\|\cdot\|$. Next, let us show that $\{g_n\}_{n=1}^\infty$ is Cauchy with respect to $\|\cdot\|_{\mathfrak{re}_D}$:
\begin{align*}\|g_n-g_m\|^2_{\mathfrak{re}_D}=\|f_n-f_m\|^2+\|V_F^{1/2}(f_n-f_m)\|^2=\|f_n-f_m\|_{\mathfrak{re}_D}^2\overset{n,m\rightarrow\infty}{\longrightarrow}0\:,
\end{align*}
since $\{f_n\}_{n=1}^\infty$ has a limit with respect to $\|\cdot\|_{\mathfrak{re}_D}$. Thus, we get
\begin{align*}
\|v_F+\mathcal{P}\widetilde{k}\|^2_{\mathfrak{re}_D}&=\lim_{n\rightarrow\infty}\|g_n\|^2_{\mathfrak{re}_D}\\&=\lim_{n\rightarrow\infty}(\|g_n\|^2+\|V_F^{1/2}(f_n+A_F^{-1}D\widetilde{k}+(\idty-\mathcal{P})\widetilde{k})\|^2+q(A_F^{-1}D\widetilde{k}+\widetilde{k}))\\&=\|v_F+\mathcal{P}\widetilde{k}\|^2+\|V_F^{1/2}v_F\|^2+q(A_F^{-1}D\widetilde{k}+\widetilde{k})\\
&=\|v_F+\mathcal{P}\widetilde{k}\|^2+\|V_F^{1/2}v_F\|^2+\Real[\widetilde{k},D\widetilde{k}]-\|V_K^{1/2}\widetilde{k}\|^2
\end{align*}
and since for any $\phi\in\mathcal{D}(\mathfrak{re}_D)$ we have $\|\phi\|_{\mathfrak{re}_D}^2=\|\phi\|^2+\mathfrak{re}_D(\phi)$, this allows us to read off
\begin{equation*}
\mathfrak{re}_D(v_F+\mathcal{P}\widetilde{k})=\|V_F^{1/2}v_F\|^2+\Real[\widetilde{k},D\widetilde{k}]-\|V_K^{1/2}\widetilde{k}\|^2\:.
\end{equation*}
However, if $\mathcal{D}(V_F^{1/2})\cap\mathcal{D}(D)$ is non-trivial, we could have added a $\chi\in(\mathcal{D}(V_F^{1/2})\cap\mathcal{D}(D))$ such that $\mathcal{P}(\widetilde{k}+\chi)=\mathcal{P}{\widetilde{k}}=\eta$. But since $\mathfrak{re}_{D,0}$ was assumed to be closable, we have by Theorem \ref{thm:closable} that $q(A_F^{-1}D\chi+\chi)=0$ and by \eqref{eq:schee}, we have in addition that 
\begin{equation} \label{eq:foermchen}
q(A_F^{-1}D\widetilde{k}+\widetilde{k}+A_F^{-1}D\chi+\chi)=q(A_F^{-1}D\widetilde{k}+\widetilde{k})\:.
\end{equation}
Thus, the specific choice of $\mathcal{S}\subset\mathcal{D}(D)$ --- as long as it is complementary to $(\mathcal{D}(D)\cap\mathcal{D}(V_F^{1/2}))$ in $\mathcal{D}(D)$ --- does not affect the value of $\mathfrak{re}_D(v_F+\eta)$, where $v_F\in\mathcal{D}(V_F^{1/2})$ and $\eta\in\mathcal{PD}(D)$, where $\eta=\mathcal{P}\widetilde{k}$ for a unique $\widetilde{k}\in\mathcal{S}$. Hence, by Equation \eqref{eq:foermchen}, we get
\begin{equation} \label{eq:altus}
\mathfrak{re}_D(v_F+\eta)=\mathfrak{re}_D(v_F+\mathcal{P}\widetilde{k}_\eta)=\|V_F^{1/2}v_F\|^2+\Real[\widetilde{k}_\eta,D\widetilde{k}_\eta]-\|V_K^{1/2}\widetilde{k}_\eta\|^2\:,
\end{equation} 
where $\widetilde{k}_\eta$ is the unique element of $\mathcal{S}$ such that $\mathcal{P}\widetilde{k}_\eta=\eta$, or in other words, we get $\widetilde{k}_\eta=\mathcal{P}^{-1}\eta$. Plugged into \eqref{eq:altus}, this yields
\begin{equation} \label{eq:hirnschmarrn}
\mathfrak{re}_D(v_F+\eta)=\|V_F^{1/2}v_F\|^2+\Real[\mathcal{P}^{-1}\eta,D\mathcal{P}^{-1}\eta]-\|V_K^{1/2}\mathcal{P}^{-1}\eta\|^2\:.
\end{equation}
Now, since we have shown in Lemma \ref{lemma:brexit} that $V_D$ is a non-negative selfadjoint extension of $V$, we know by \cite{Alonso-Simon} that there exists a subspace $\mathcal{D}(B)\subset\ker V^*$ and a non-negative auxiliary operator $B$ from $\mathcal{D}(B)$ into $\mathcal{D}(B)$ such that 
\begin{equation} \label{eq:oberschmarrn}
\mathfrak{re}_D(v_F+\eta)=\|V_D^{1/2}(v_F+\eta)\|^2=\|V_F^{1/2}v_F\|^2+q_B(\eta)\:,
\end{equation}
where $v_F\in\mathcal{D}(V_F^{1/2})$ and $\eta\in\mathcal{D}(B)$. The form $q_B$ is given by $q_B(\eta)=\langle \eta,B\eta\rangle$ for all $\eta\in\mathcal{D}(B)$.\footnote{Note that we are only considering the finite-dimensional case, which means that we do not have to worry about closures and domains.} Comparing Equations \eqref{eq:oberschmarrn} and \eqref{eq:hirnschmarrn}, we can read off that
\begin{equation} \label{eq:megaschmarrn}
q_B(\eta)=\Real[\mathcal{P}^{-1}\eta,D\mathcal{P}^{-1}\eta]-\|V_K^{1/2}\mathcal{P}^{-1}\eta\|^2\:,
\end{equation}
which is the desired result. To determine the entries of the non-negative matrix $(b_{ij})_{ij}$, we use that the sesquilinear form $q_B(\cdot,\cdot)$ associated to $q_B$ is given by
\begin{equation*}
q_B(\eta_i,\eta_j)=\frac{1}{2}([\mathcal{P}^{-1}\eta_i,D\mathcal{P}^{-1}\eta_j]+[D\mathcal{P}^{-1}\eta_i,\mathcal{P}^{-1}\eta_j])-\langle V_K^{1/2}\mathcal{P}^{-1}\eta_i,V_K^{1/2}\mathcal{P}^{-1}\eta_j\rangle\:.
\end{equation*}
This immediately follows from the fact that $\mathfrak{re}_D(\eta,\eta)=\mathfrak{re}_D(\eta)$, which can be seen by direct inspection. Now, since $b_{ij}=\langle \eta_i,B\eta_j\rangle=q_B(\eta_i,\eta_j)$, this finishes the proof.
\end{proof}
\begin{remark} Note that we are not assuming that $\dim(\mathcal{D}(D))<\infty$ for Theorem \ref{thm:townsende} and the following Corollary \ref{coro:order}. As long as $\mathfrak{re}_{D,0}$ is closable (cf. Remark \ref{rem:closable} for sufficient conditions for this to be true), we only need to assume that $\dim(\mathcal{P}\mathcal{D}(D))<\infty$. We hope to be able to remove this technical assumption in a future work.
\end{remark}
The previous result allows us to deduce a way of comparing the real parts $V_{D_1}$ and $V_{D_2}$ of two different extensions $A_{D_1}$ and $A_{D_2}$:
\begin{corollary} \label{coro:order}
Let $D_1$ and $D_2$ parametrize two different proper maximally dissipative extensions of $(A_+,A_-)$ and let $B_1$ and $B_2$ be the two associated non-negative auxiliary operators whose quadratic forms are given in \eqref{eq:megaschmarrn}.
Then $B_1\geq B_2$ if and only if $\mathcal{PD}(D_1)\subset\mathcal{PD}(D_2)$ and 
\begin{equation} \label{eq:nerventeil}
\Real [\widetilde{k},D_1\widetilde{k}]\geq \Real [\widetilde{k}, D_2\widetilde{k}]
\end{equation}
for all $\widetilde{k}\in\mathcal{D}(D_1)$.
\end{corollary}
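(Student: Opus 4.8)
The plan is to unwind the operator inequality $B_1\ge B_2$ into a domain condition and a form condition, and to match these against the two conditions in the statement using the explicit formula for $q_{B_i}$ from Theorem \ref{thm:townsende}.

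First I would recall that, by the (extended) definition of the order recalled after \eqref{eq:fussball}, $B_1\ge B_2$ is the same as $B_2\le B_1$, i.e.\ $\mathcal{D}(B_2^{1/2})\supset\mathcal{D}(B_1^{1/2})$ together with $\|B_2^{1/2}\eta\|\le\|B_1^{1/2}\eta\|$ for all $\eta\in\mathcal{D}(B_1^{1/2})$. Since we are in the finite-dimensional situation $\dim\mathcal{PD}(D_i)<\infty$ with $\mathcal{D}(B_i)=\mathcal{PD}(D_i)$, we have $\mathcal{D}(B_i^{1/2})=\mathcal{PD}(D_i)$, so the domain part is exactly the asserted inclusion $\mathcal{PD}(D_1)\subset\mathcal{PD}(D_2)$, while the norm part becomes $q_{B_2}(\eta)\le q_{B_1}(\eta)$ for all $\eta\in\mathcal{PD}(D_1)$, where $q_{B_i}(\eta)=\langle\eta,B_i\eta\rangle$.

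The heart of the argument is then to compute the difference $q_{B_1}(\eta)-q_{B_2}(\eta)$. By Theorem \ref{thm:townsende} we have $q_{B_i}(\eta)=\Real[\mathcal{P}^{-1}\eta,D_i\mathcal{P}^{-1}\eta]-\|V_K^{1/2}\mathcal{P}^{-1}\eta\|^2$, and --- crucially --- the proof of that theorem establishes, via the invariance \eqref{eq:schee} under adding elements of $\mathcal{D}(D_i)\cap\mathcal{D}(V_F^{1/2})$, that this value does not depend on the chosen representative: for any $\widetilde{k}\in\mathcal{D}(D_i)$ with $\mathcal{P}\widetilde{k}=\eta$ one has $q_{B_i}(\eta)=\Real[\widetilde{k},D_i\widetilde{k}]-\|V_K^{1/2}\widetilde{k}\|^2$. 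Since \eqref{eq:nerventeil} already presupposes that $D_2\widetilde{k}$ is defined for every $\widetilde{k}\in\mathcal{D}(D_1)$, i.e.\ $\mathcal{D}(D_1)\subset\mathcal{D}(D_2)$, a single $\widetilde{k}\in\mathcal{D}(D_1)\subset\mathcal{D}(D_2)$ can serve as a common representative of $\eta=\mathcal{P}\widetilde{k}$ in both $q_{B_1}(\eta)$ and $q_{B_2}(\eta)$. With this common representative the two $\|V_K^{1/2}\widetilde{k}\|^2$ terms cancel, leaving $q_{B_1}(\eta)-q_{B_2}(\eta)=\Real[\widetilde{k},D_1\widetilde{k}]-\Real[\widetilde{k},D_2\widetilde{k}]$.

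To finish, I would observe that $\widetilde{k}\mapsto\mathcal{P}\widetilde{k}$ maps $\mathcal{D}(D_1)$ onto $\mathcal{PD}(D_1)$, so quantifying over $\eta\in\mathcal{PD}(D_1)$ is the same as quantifying over $\widetilde{k}\in\mathcal{D}(D_1)$. Hence $q_{B_2}(\eta)\le q_{B_1}(\eta)$ for all $\eta\in\mathcal{PD}(D_1)$ holds if and only if $\Real[\widetilde{k},D_1\widetilde{k}]\ge\Real[\widetilde{k},D_2\widetilde{k}]$ for all $\widetilde{k}\in\mathcal{D}(D_1)$, which is \eqref{eq:nerventeil}; combined with the domain equivalence from the first step, this yields both implications of the corollary. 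I expect the only genuine subtlety --- the point deserving care --- to be the representative ambiguity: a priori $q_{B_1}$ and $q_{B_2}$ are defined through different complementary subspaces and hence different inverses $\mathcal{P}^{-1}$, so the cancellation of the $V_K^{1/2}$-terms works only once one invokes the representative-independence established in Theorem \ref{thm:townsende} together with the inclusion $\mathcal{D}(D_1)\subset\mathcal{D}(D_2)$ implicit in \eqref{eq:nerventeil}.
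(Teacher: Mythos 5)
Your proposal is correct and follows essentially the same route as the paper: unwind $B_1\geq B_2$ into the domain inclusion $\mathcal{PD}(D_1)\subset\mathcal{PD}(D_2)$ plus the form inequality, then cancel the $\|V_K^{1/2}\cdot\|^2$ terms in $q_{B_1}(\eta)-q_{B_2}(\eta)$ via the formula \eqref{eq:megaschmarrn}. The only presentational difference is in the necessity direction, where the paper separately rules out a violating $\widetilde{k}\in\mathcal{D}(V_F^{1/2})\cap\mathcal{D}(D_1)$ using Theorem \ref{thm:closable} and the accretivity of $A_{D_1}$, whereas you absorb that case into the representative-independence of $q_{B_i}$ established in the proof of Theorem \ref{thm:townsende} --- both rest on the same underlying facts \eqref{eq:zeitgenug} and \eqref{eq:schee}.
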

\begin{proof} By definition, $B_1\geq B_2$ as operators on a finite-dimensional space if and only if $\mathcal{D}(B_1)\subset\mathcal{D}(B_2)$ and $q_{B_1}(\eta)\geq q_{B_2}(\eta)$ for all $\eta\in\mathcal{D}(B_1)$. Since $\mathcal{D}(B_{1,2})=\mathcal{PD}(D_{1,2})$, this shows the first condition of the corollary. Now, for any $\eta\in\mathcal{D}(B_1)=\mathcal{PD}(D_1)$ we have by \eqref{eq:megaschmarrn}
\begin{align}
q_{B_1}(\eta)&-q_{B_2}(\eta)\notag\\&=\Real [\mathcal{P}^{-1}\eta,D_1\mathcal{P}^{-1}\eta]-\|V_K^{1/2}\mathcal{P}^{-1}\eta\|^2-(\Real [\mathcal{P}^{-1}\eta,D_2\mathcal{P}^{-1}\eta]-\|V_K^{1/2}\mathcal{P}^{-1}\eta\|^2)\notag\\&=\Real [\mathcal{P}^{-1}\eta,D_1\mathcal{P}^{-1}\eta]-\Real [\mathcal{P}^{-1}\eta,D_2\mathcal{P}^{-1}\eta]\:,
\end{align}
which is non-negative for all $\eta\in\mathcal{D}(B_1)$ if and only if $$\Real[\mathcal{P}^{-1}\eta,D_1\mathcal{P}^{-1}\eta]\geq\Real [\mathcal{P}^{-1}\eta,D_2\mathcal{P}^{-1}\eta]$$ for all $\eta\in\mathcal{D}(B_1)=\mathcal{PD}(D_1)$. Thus, Condition \eqref{eq:nerventeil} being satisfied is sufficient for $B_1\geq B_2$. Let us now show that it is also necessary. Assume that there exists a $\widetilde{k}\in\mathcal{D}(D_1)$ such that
\begin{equation*}
\Real[\widetilde{k},D_1\widetilde{k}]<\Real[\widetilde{k},D_2\widetilde{k}]\:\:\Leftrightarrow\:\: \Real[\widetilde{k},D_1\widetilde{k}]-\|V_K^{1/2}\widetilde{k}\|^2<\Real[\widetilde{k},D_2\widetilde{k}]-\|V_K^{1/2}\widetilde{k}\|^2\:.
\end{equation*}
Observe that by Theorem \ref{thm:closable}, this means that $\widetilde{k}\in(\mathcal{D}(V_F^{1/2})\cap\mathcal{D}(D_1))$ is not possible in this case, since this would imply that $\Real[\widetilde{k},D_2\widetilde{k}]-\|V_K^{1/2}\widetilde{k}\|^2=0$, but by accretivity of $A_{D_1}$ we have by virtue of Theorem \ref{thm:dido} that $\Real[\widetilde{k},D_1\widetilde{k}]-\|V_K^{1/2}\widetilde{k}\|^2\geq 0$. Hence, $0\neq\mathcal{P}\widetilde{k}=:\eta\in\mathcal{PD}(D_1)$ or $\mathcal{P}^{-1}\eta=\widetilde{k}$. Therefore we get
\begin{equation*}
q_{B_1}(\eta)-q_{B_2}(\eta)=\Real[\widetilde{k},D_1\widetilde{k}]-\Real[\widetilde{k},D_2\widetilde{k}]<0\:,
\end{equation*}
which shows that $B_1\not\geq B_2$ if Condition \eqref{eq:nerventeil} is not satisfied, which therefore is necessary for $B_1\geq B_2$ to be true.
This shows the corollary.
\end{proof}
\begin{remark} \normalfont These results allow us to give first estimates of the lower bound of the real part. From \cite[Thm. 2.13]{Alonso-Simon}, it follows that 
\begin{equation}
\frac{\alpha\delta}{1+\delta}\leq \inf_{0\neq\psi\in\mathcal{D}(A_D)}\frac{\Real\langle \psi,A_D\psi\rangle}{\|\psi\|^2}\leq \alpha\delta\:,
\end{equation}
where $\alpha$ is the lower bound of the real part of $A$ and $\delta$ is the lower bound of the quadratic form $q_B$:
$$\alpha:=\inf\left\{\frac{\Real\langle \psi,A\psi\rangle} {\|\psi\|^2}: \psi\in\mathcal{D}(A), \psi\neq 0\right\} \:\:\:\: \text{and} \:\:\:\: \delta:=\inf\left\{\frac{q_B(\eta)}{\|\eta\|^2}:\eta\in\mathcal{PD}(D), \eta\neq 0\right\}\:.$$ As mentioned in \cite[Thm. 2.13]{Alonso-Simon}, this means in particular that $$\inf\left\{\frac{\Real\langle \psi,A_D\psi\rangle} {\|\psi\|^2}: \psi\in\mathcal{D}(A_D), \psi\neq 0\right\}=0$$ if and only if $\delta=0$. (Recall that we have assumed that $\alpha\geq \varepsilon >0$.)
\end{remark}
\begin{example}[Continuation of Example \ref{ex:mabaker}] \normalfont
Consider the dual pair $(A_+,A_-)$ as defined in Example \ref{ex:mabaker}. Now, since $\mathcal{D}(D)=\spann\{x^{\omega_+}\}$ and $$x^{\omega_+}=\underbrace{(x^{\omega_+}-x)}_{\in\mathcal{D}(V_F^{1/2})}+\underbrace{x}_{\in\ker V^*}\:,$$ we get $\mathcal{P}\mathcal{D}(D)=\spann\{x\}$. In particular, we have that $\mathcal{D}(D)\cap\mathcal{D}(V_F^{1/2})=\{0\}$ from which we see by virtue of Theorem \ref{thm:closable} that $\mathfrak{re}_{D,0}$ is closable. Moreover, the operator $\mathcal{P}\upharpoonright_{\mathcal{D}(D)}$ is injective and thus, we define $\mathcal{P}^{-1}x=x^{\omega_+}$. By Lemma \ref{lemma:sts}, the associated operator $V_D$ has form domain
\begin{equation*}
\mathcal{D}(V_D^{1/2})=\mathcal{D}(V_F^{1/2})\dot{+}\mathcal{P}\spann\{x^{\omega_+}\}=\mathcal{D}(V_F^{1/2})\dot{+}\spann\{x\}=H^1_0(0,1)\dot{+}\spann\{x\}
\end{equation*}
and the quadratic form acts like
\begin{align*}
\|{V_D}^{1/2}(f+\lambda x)\|^2&=\|V_F^{1/2}f\|^2+|\lambda|^2\left(\Real[\mathcal{P}^{-1}x,D\mathcal{P}^{-1}x]-\|V_K^{1/2}\mathcal{P}^{-1}x\|^2\right)\\
&=\|V_F^{1/2}f\|^2+|\lambda|^2\left(\Real(d\sigma(\omega_+))-\tau(\omega_+)\right)\:,
\end{align*}
where $f\in\mathcal{D}(V_F^{1/2})$. Recall that the numbers $\sigma(\omega_+)$ and $\tau(\omega_+)$ have been defined in Equations \eqref{eq:thomismus} and \eqref{eq:bonaventura}.
The operator $B_D$ associated to the quadratic form is a map from $\spann\{x\}$ to $\spann\{x\}$ and is therefore of the form $B_Dx=bx$, where $b\in\C$. By Theorem \ref{thm:townsende}, we have that $b$ is given by
$$b=\Real[\mathcal{P}^{-1}(\sqrt{3}x),D\mathcal{P}^{-1}(\sqrt{3}x)]-\|V_K^{1/2}\mathcal{P}^{-1}\sqrt{3}x\|^2=3(\Real(d\sigma(\omega_+))-\tau(\omega_+))\:,$$
where the factor $\sqrt{3}$ comes from normalizing the function $x$. Now, for two different maximally accretive extensions $A_{D_1}$ and $A_{D_2}$, we have that if $\Real(d_1\sigma(\omega_+))=\Real [\widetilde{k},D_1\widetilde{k}]\geq\Real(d_2\sigma(\omega_+))=\Real [\widetilde{k},D_2\widetilde{k}]$, this implies that $B_{D_1}\geq B_{D_2}$.

Finally, let us construct the selfadjoint operators $V_D$ using the Birman--Kre\u\i n--Vishik theory for positive symmetric operators. For $\mathcal{D}(D)=\{0\}$ we get the Friedrichs extension $V_F$ of $V$. The other possibility is that $\mathcal{D}(D)=\spann\{x^{\omega_+}\}$ with $\mathcal{P}\spann\{x^{\omega_+}\}=\spann\{x\}$. We then get
\begin{align*}
V_D:\:\mathcal{D}(V_D)&=\mathcal{D}(V)\dot{+}\spann\{V_F^{-1}B_Dx+x\}\dot{+}\spann\{V_F^{-1}(2-3x)\}\\
&=\mathcal{D}(V)\dot{+}\spann\left\{3[\Real(d\sigma(\omega_+))-\tau(\omega_+)]V_F^{-1}x+x\right\}\dot{+}\spann\{V_F^{-1}(2-3x)\}\\
f&\mapsto-f''\:,
\end{align*}
where the last span comes from the fact that $(2-3x)\perp x$. Also, note that it is not difficult to compute $V_F^{-1}1$ and $V_F^{-1}x$:

\begin{equation*}
V_F^{-1}1=\frac{x^2-x}{2}\quad\text{and}\quad
V_F^{-1}x=\frac{x^3-x}{6}\:.
\end{equation*}
\end{example}
\subsection*{Acknowledgements} I am very grateful to Yu.\ {Arlinski\u\i} and M.\ M.\ Malamud for providing me with useful references. Parts of this work have been done during my PhD studies at the University of Kent in Canterbury, UK (cf.\ \cite[Chapter 8]{thesis}). Thus, I would like to thank my thesis advisors Sergey Naboko and Ian Wood for support, guidance and useful remarks on this paper. Moreover, I am very grateful to the UK Engineering and Physical Sciences Research Council (Doctoral Training Grant Ref.\ EP/K50306X/1) and the School of Mathematics, Statistics and Actuarial Science at the University of Kent for a PhD studentship.
\vspace{-0.2cm}

\end{document}